\numberwithin{equation}{section}
\newtheorem{theorem}{Theorem}[section]
\newtheorem{corollary}[theorem]{Corollary}
\newtheorem{lemma}[theorem]{Lemma}
\newtheorem{proposition}[theorem]{Proposition}
\theoremstyle{definition}
\newtheorem{remark}[theorem]{Remark}
\theoremstyle{definition}
\theoremstyle{definition}
\newtheorem{assumption}[theorem]{Assumption}
\def\dashint{\operatorname%
{\,\,\text{\bf--}\kern-.98em\DOTSI\intop\ilimits@\!\!}}
\def\bR{\mathbb{R}}
\def\cA{\mathcal{A}}
\def\cB{\mathcal{B}}
\def\cC{\mathcal{C}}
\def\cF{\mathcal{F}}
\def\cH{\mathcal{H}}
\def\cM{\mathcal{M}}
\def\cS{\mathcal{S}}
\def\cL{\mathcal{L}}
\begin{document}
\title[Boundedness of non-local operators]{Boundedness of non-local operators with spatially dependent coefficients and $L_p$-estimates for non-local equations }

\author[H. Dong]{Hongjie Dong}
 \address[H. Dong]{Division of Applied Mathematics, Brown University, 182 George Street, Providence, RI 02912, USA}
\email{\href{mailto:Hongjie\_Dong@brown.edu}{\nolinkurl{Hongjie\_Dong@brown.edu}}}

\thanks{H. Dong was partially supported by a Simons fellowship grant no.\,007638, the NSF under agreement DMS-2055244, and the Charles Simonyi Endowment at the Institute of Advanced Study.}

\author[P. Jung]{Pilgyu Jung}
\address[P. Jung]{Department of Mathematics, Korea University, 145 Anam-ro, Seongbuk-gu, Seoul, 02841, Republic of Korea}
\email{\href{mailto:pilgyu_jung@korea.ac.kr}{\nolinkurl{pilgyu_jung@korea.ac.kr}}}

\author[D. Kim]{Doyoon Kim}
\address[D. Kim]{Department of Mathematics, Korea University, 145 Anam-ro, Seongbuk-gu, Seoul, 02841, Republic of Korea}
\email{\href{mailto:doyoon_kim@korea.ac.kr}{\nolinkurl{doyoon_kim@korea.ac.kr}}}

\thanks{P. Jung and D. Kim were supported by the National Research Foundation of Korea (NRF) grant funded by the Korea government (MSIT) (2019R1A2C1084683).}

\subjclass[2020]{35R11, 47B38, 35B65} 

\keywords{non-local equations, $x$-dependent kernels, boundedness of operators, Bessel potential spaces, Muckenhoupt weights}

\begin{abstract}
We prove the boundedness of the non-local operator
\[
\cL^a u(x)=\int_{\bR^d} \left(u(x+y)-u(x)-\chi_\alpha(y)\big(\nabla u(x),y\big)\right) a(x,y)\frac{dy}{|y|^{d+\alpha}}
\]
from $H_{p,w}^\alpha(\bR^d)$ to $L_{p,w}(\bR^d)$ for the whole range of $p \in (1,\infty)$, where $w$ is a Muckenhoupt weight.
The coefficient $a(x,y)$ is bounded, merely measurable in $y$, and H\"{o}lder continuous in $x$ with an arbitrarily small exponent.
We extend the previous results by removing the largeness assumption on $p$ as well as considering weighted spaces with Muckenhoupt weights.
Using the boundedness result, we prove the unique solvability in $L_p$ spaces of the corresponding parabolic  and elliptic non-local equations.
\end{abstract}

\maketitle

\section{Introduction}

We consider non-local parabolic equations of the form
\[
u_t - \cL^a u = f \quad \text{in} \quad \bR^d_T: = (0,T) \times \bR^d
\]
with the non-local operator $\cL^a$ defined by
\begin{equation}\label{05131328}
\cL^a u(t,x)=\int_{\bR^d}\left(u(t,x+y)-u(t,x)-\chi_\alpha(y)\big(\nabla u(t,x),y\big)\right) a(t,x,y)\frac{dy}{|y|^{d+\alpha}},
\end{equation}
where $\alpha\in(0,2)$ and $\chi_\alpha=1_{\alpha>1}+1_{\alpha=1}1_{B_1}.$
Note that the coefficient $a(t,x,y)$ in the operator $\cL^a$ is a function of not only $y \in \bR^d$ but also $(t,x) \in \bR \times \bR^d$.
We also consider non-local elliptic equations of the form
\begin{equation}
							\label{eq1026_01}
\cL^a u = f \quad \text{in} \quad \bR^d,
\end{equation}
where $\cL^a$ is as in \eqref{05131328} with time independent $u(x)$ and $a(x,y)$.
Non-local operators as above appear in the equations describing various phenomena in physics, mathematical finance, biology, and fluid dynamics (see, for instance, \cite{MR2042661, MR2680400}).

The main focus of this paper is the boundedness (i.e., continuity) of the  operator $\cL^a$ from $H_{p,w}^\alpha(\bR^d)$ to $L_{p,w}(\bR^d)$, $1<p<\infty$, under the assumption that the coefficient $a(x,y)$ is measurable in $y$ and continuous in $x$ with an integrability condition on the modulus of continuity (cf. Assumption \ref{05031403} and Remark \ref{rem1026_1}), where $H_{p,w}^\alpha(\bR^d)$ denotes the Bessel potential space with a Muckenhoupt weight $w \in A_p(\bR^d)$.
See Theorem \ref{05140812}.
This result then readily implies the corresponding boundedness for the parabolic case with the coefficient $a(t,x,y)$.
See Corollary \ref{cor0913_01}.
We remark that even in the unweighted case, i.e., $w = 1$, our result is new.

It is well known that, in the study of partial differential equations, a prerequisite is the boundedness of the corresponding differential operators in the considered function spaces.
For instance, in the $L_p$-theory, one usually obtains a priori estimates of solutions and uses the method of continuity along with the unique solvability of a simple equation.
However, all of these steps require the boundedness of the involved operator in an $L_p$ type function space.
In the local case, for instance, if the operator $\cL$ is of the form
\[
\cL u = a^{ij}D_{ij}u,
\]
the boundedness of the operator $\cL$ from $W_p^2(\bR^d)$ to $L_p(\bR^d)$ is straightforward, requiring no regularity assumptions on the coefficients $a^{ij}$, provided that they are bounded measurable functions.
This is also the case when  the $L_p$ space is replaced with a weighted $L_p$ space.
Similarly, for non-local cases, to find a solution $u \in H_p^\alpha(\bR^d)$ of the elliptic equation \eqref{eq1026_01} for a given $f \in L_p(\bR^d)$, it is essential to have
\[
\|\cL^a u\|_{L_p(\bR^d)} \leq N \|u\|_{H_p^\alpha(\bR^d)},
\]
where $H_p^\alpha(\bR^d)$ is a Bessel potential space without weights.
Contrary to the local operator case, the boundedness of non-local operators in $L_p$ spaces is far from being obvious even in the simple case when $a = a(y)$.
Indeed, for non-local operators as in \eqref{05131328}, as far as the authors are aware, there is no result in the literature about the boundedness of the operators if the coefficient $a(t,x,y)$ is merely bounded measurable without any regularity assumptions as a function of $x \in \bR^d$, except for special cases such as the separable coefficient $a(t,x,y) = a_1(t,x)a_2(y)$.

There have been considerable studies about the boundedness of non-local operators as in \eqref{05131328} in $L_p$ spaces along with $L_p$ estimates of solutions to non-local operators.
If the coefficient $a(t,x,y)$ is constant, the operator $\cL^a$ in \eqref{05131328} is, by definition, the fractional derivative of order $\alpha$, whose boundedness is guaranteed for $u \in H_p^\alpha(\bR^d)$.
However, even in the aforementioned simple case, i.e., $a = a(y)$ in the elliptic case and $a = a(t,y)$ in the parabolic case, in early work about the boundedness of $\cL^a$ some conditions were imposed on $a(y)$ or $a(t,y)$ as a function of $y \in \bR^d$.
See \cite{MR2345912, MR1246036}, the results of which can be applied to show boundedness of $\cL^a$ only when $a = a(y)$ or $a=a(t,y)$ is either symmetric, i.e., $a(-y) = a(y)$, or homogeneous of order zero and sufficiently smooth as a function of $y \in \bR^d$.
In fact, the papers \cite{MR2345912, MR1246036} deal with more general forms of non-local operators.
These restrictions on $a(y)$ were removed in \cite{MR2863859} so that $a(y)$ can be bounded measurable, but still needs to be a function of only $y \in \bR^d$.
The boundedness of non-local operators with $x$-dependent kernels was first considered in \cite{MR3145767} for $L_p$ spaces with sufficiently large $p$.
In terms of the operator $\cL^a$ in \eqref{05131328}, the coefficient $a$ can be a function of  $x \in \bR^d$ as well as of  $(t,y) \in \bR^{d+1}$ provided that $a(t,x,y)$ is measurable in $(t,y)$ and H\"{o}lder continuous in $x$.
Using this boundedness result, the authors also proved the existence and uniqueness in Sobolev spaces of solutions to the Cauchy problem  for parabolic integro-differential equations with variable coefficients of the order $\alpha \in (0, 2)$ for sufficiently large $p$.
The main results in \cite{MR3145767} have been applied, for instance, in \cite[Theorem 2.5]{MR4299838} to non-local operators with so-called variable densities, where the investigation was also limited to sufficiently large $p$.

In this paper, we prove the boundedness of the operator $\cL^a$ in \eqref{05131328} when $a=a(t,x,y)$ is measurable in $(t,y)$ and H\"{o}lder continuous in $x$ (see Assumption \ref{05031403} and Remark \ref{rem1026_1}) for all $p \in (1,\infty)$.
That is, we remove the restriction in \cite{MR3145767} that $p$ has to be sufficiently large.
Moreover, we show the boundedness of the operator in weighted $L_p$ spaces with weight $w \in A_p(\bR^d)$.
Nevertheless, whether the boundedness of non-local operators still holds for general $a(t,x,y)$ (for instance, discontinuous or uniformly continuous $a(t,x,y)$ in $x$) remains to be an open problem, for which one can consider various conditions on $a(t,x,y)$.
It is worth mentioning that if H\"{o}lder spaces are considered, the boundedness of the operator $\cL^a$ under optimal conditions is obtained relatively easily by using perturbation arguments. See, for instance, \cite{MR3007688} and \cite{MR3201992}.

As an application of the operator boundedness result, we obtain $L_p$ estimates as in \cite{MR3145767} for all $p \in (1,\infty)$.
See Theorems \ref{05141210} and \ref{thm05241}.
Since our boundedness result is proved in weighted $L_p$ spaces, one can consider weighted $L_p$ estimates for non-local operators.
Regarding results in the weighted setting, see a recent paper \cite{arXiv:2108.11840}.

To prove the boundedness of non-local operators, as a key step, we obtain the boundedness of the operator $\varphi \cL^a$, where $\varphi$ is a cut-off function, from $H_{p,w}^\alpha(\bR^d)$ to $L_{p,w}(\bR^d)$ (for each $t$ in the parabolic case) for all $p \in (1,\infty)$, where $w \in A_p(\bR^d)$.
See Lemma \ref{05021011}.
For the proof of this result, we make an observation that $\cL^a$ with $a=a(y)$ (i.e., $a$ is independent of $x$) is a bounded operator in weighted spaces. See Proposition \ref{05132149}.
We also adapt some arguments from \cite{MR3145767}.

The remaining part of the paper is organized as follows.
In the next section, we introduce some notation and state the main results of the paper.
In Section \ref{aux_sec}, we present some auxiliary results for the proof of the main results.
In Section  \ref{boundedness_sec}, we obtain the boundedness of the operator $\cL^a$ in weighted $L_p$ spaces when the coefficient $a$ is a function of $y$ only.
Finally, we present the proofs of the main results in Section \ref{proof_sec}.

\section{Notation and main results}

For $x\in\bR^d$ and $R\in(0,\infty)$, we denote
\[
B_R(x)=\{y\in \bR^d:|x-y|<R\}, \quad B_R=B_R(0),
\]
where $d$ is a positive integer. As usual, $\cS(\bR^d)$ is the Schwartz function space in $\bR^d$ and $C_0^\infty(\bR^d)$ is the set of all infinitely  differentiable functions with compact support in $\bR^d$.
By $\partial^\alpha$ we mean the fractional Laplacian of order $\alpha$.
That is,
\begin{align*}
\partial^\alpha u (x) = -(-\Delta)^{\alpha/2} u (x) &= c \operatorname{P.V.} \int_{\bR^d} \left( u(x+y)-u(x) \right) \frac{dy}{|y|^{d+\alpha}}
\\
&= \frac{c}{2} \int_{\bR^d} \left( u(x+y) + u(x-y)-2u(x) \right) \frac{dy}{|y|^{d+\alpha}}
\end{align*}
for sufficiently regular $u$ defined on $\bR^d$, where
$$
c = c(d,\alpha) = \frac{\alpha(2-\alpha)\Gamma\left(\frac{d+\alpha}{2}\right)}{\pi^{d/2} 2^{2-\sigma}\Gamma\left(2 - \frac{\alpha}{2}\right)}.
$$

For $1<p<\infty$ and a positive integer $k$, we  set $A_p(\bR^k)$ to be the set of all nonnegative functions $w$ on $\bR^k$ such that
\begin{align*}
&[w]_{A_p(\bR^k)}\\
&:=\sup_{x_0\in \bR^k,R>0}\left(\frac{1}{|B_R|}\int_{B_R(x_0)}w\,dx\right)
\left(\frac{1}{|B_R|}\int_{B_R(x_0)}w^{-1/(p-1)}\,dx\right)^{p-1}<\infty.
\end{align*}
For $w\in A_p(\bR^d)$, by
$L_{p,w}(\bR^d)$ we mean the space of all measurable functions in $\bR^d$ with the norm
\[
\|f\|_{L_{p,w}(\bR^d)}=\left(\int_{\bR^d}|f|^p \, w \, dx\right)^{\frac{1}{p}}.
\]
For $p\in(1,\infty)$, $w\in A_p(\bR^d)$, and $\alpha\in \bR$, recall the definition of the weighted Bessel potential space
	\[
	H_{p,w}^\alpha(\bR^d)=\{u\in L_{p,w}(\bR^d):(1-\Delta)^{\alpha/2}u\in L_{p,w}(\bR^d)\}
	\]
with
\[
\|u\|_{H_{p,w}^\alpha(\bR^d)}=\|(1-\Delta)^{\alpha/2}u\|_{L_{p,w}(\bR^d)}.
\]
As one may expect, we have
\begin{equation}
							\label{eq1022_01}
\|u\|_{H^\alpha_{p,w}(\bR^d)} \approx \|u\|_{L_{p,w}(\bR^d)} + \|\partial^\alpha u\|_{L_{p,w}(\bR^d)}.
\end{equation}
In Lemma \ref{lem10092043} we show  the equivalence of the two norms with constants independent of $\alpha \in (0,2)$.

Set $w(t,x) = w_1(t)w_2(x)$, where $w_1 \in A_q(\bR)$ and $w_2(x) \in A_p(\bR^d)$, $p,q \in (1,\infty)$. Then, we define $L_{p,q,w}(\bR^d_T)$, also denoted by $L_{p,q,w}(T)$, to be the space of integrable functions with the mixed norm
\[
\|u\|_{L_{p,q,w}(T)}=\left(\int_0^T \left(\int_{\bR^d}|u(t,x)|^p w_2(x) \,dx\right)^{q/p} w_1(t) \,dt\right)^{\frac{1}{q}}.
\]
We  write $\cH_{p,q,w}^\alpha(T)$ to indicate  the class of $H_{p,w_2}^\alpha(\bR^d)$-valued functions $u$ on $(0,T)$ equipped with the norm
\[
\|u\|_{\cH_{p,q,w}^\alpha(T)}=\left(\int_0^T\|u(t,\cdot)\|_{H_{p,w_2}^\alpha(\bR^d)}^q w_1(t) \,dt\right)^{\frac{1}{q}}.
\]
As solution spaces for parabolic equations, $\cH_{p,q,w}^{1,\alpha}(T)$ denotes the collection of functions $u\in \cH_{p,q,w}^\alpha(T)$ such that $ u_t\in L_{p,q,w}(T)$.
It is a Banach space with respect to the
norm
\begin{equation*}
\|u\|_{\mathcal{H}_{p,q,w}^{1,\alpha}(T)} = \|u\|_{\cH_{p,q,w}^\alpha(T)}+\|u_t\|_{L_{p,q,w}(T).}
\end{equation*}

As usual, when $w = 1$, in the above notation we simply remove $w$ so that, for instance, $L_p = L_{p,w}$ and $H_p^\alpha = H_{p,w}^{\alpha}$.
We write only $p$ instead of $p,q$ when $p = q$ so that, for instance, $\cH_p^{1,\alpha}(T)$ means $\cH_{p,p,w}^{1,\alpha}(T)$ with $w = 1$.

Throughout the paper, we denote
\[
\nabla_y^\alpha u(x)=u(x+y)-u(x)-\big(\nabla u(x),y\big)(1_{B_1}(y)1_{\alpha=1}+1_{\alpha>1})
\]
for $x, y \in \bR^d$.

We now state our assumption on the coefficient $a(t,x,y)$ in \eqref{05131328}.

\begin{assumption}\label{05031403}
\mbox{}
\begin{enumerate}
\item[(i)] For all $x,y\in\bR^d$ and $t\in(0,T)$, there are positive constants $\delta$ and $K_1$ such that
\[
(2-\alpha)\delta\le a(t,x,y)\le(2-\alpha) K_1.
\]

\item[(ii)] There exist $\beta \in (0,1)$ and a continuous increasing function $\omega(\tau )$, $\tau>0$, such that
\begin{equation*}
|a(t,x_1,y)-a(t,x_2,y)|\leq (2-\alpha)\omega(|x_1-x_2|)
\end{equation*}
for $x_1, x_2,y \in \bR^d$ and $t \in (0,T)$,
and
\begin{equation}
							\label{eq1022_03}	
\int_{|y|\leq 1}\omega(|y|)\frac{dy}{|y|^{d+\beta }}<\infty.
\end{equation}

\item[(iii)] If $\alpha =1$, then for any $(t,x)\in\bR^d_T$ and $0<r<R,$%
\begin{equation*}
\int_{r\le|y|\leq R}ya(t,x,y)\frac{dy}{|y|^{d+\alpha }}=0.
\end{equation*}
\end{enumerate}
\end{assumption}

\begin{remark}
							\label{rem1026_1}
The condition (ii) in Assumption \ref{05031403} is equivalent to a H\"{o}lder continuity condition on $a(t,x,y)$ as a function of $x$.
Since $\omega(\tau)$ is increasing, we see that
\[
\int_{r \leq |y| \leq 2r} \omega(|y|) \frac{dy}{|y|^{d+\beta}} \geq \omega(r) \int_{r \leq |y| \leq 2r} \frac{dy}{|y|^{d+\beta}} \geq N r^{-\beta} \omega(r),
\]
which together with \eqref{eq1022_03} indicates that $a(t,x,y)$ is H\"{o}lder continuous in $x$.
\end{remark}

Our first main result is about  the boundedness of non-local operators.
Let us consider the non-local operator $\cL$ in \eqref{05131328}, where the coefficient $a$ is independent of the time variable.

\begin{theorem}\label{05140812}

 Let $\beta\in(0,1)$, $\alpha\in(0,2)$, $1<p<\infty$, and $w\in A_p(\bR^d)$.
 Suppose that the coefficient $a=a(x,y)$ (i.e., $a$ is independent of the time variable) satisfies Assumption \ref{05031403}.
Then $\cL^a$ is a bounded operator from $H_{p,w}^\alpha(\bR^d)$ to $L_{p,w}(\bR^d).$
More precisely, there exists a constant $N=N(d,p,\alpha,\beta,K_1,\omega, [w]_{A_p})$ such that, for any $u\in H_{p,w}^\alpha(\bR^d)$,
\begin{equation*}
    \|\cL^a u\|_{L_{p,w}(\bR^d)}\le N \|u\|_{H_{p,w}^\alpha(\bR^d)}.
\end{equation*}
Moreover, the constant $N$ can be chosen so that $N = N(d,p,\alpha_0, \alpha_1, \beta, K_1, \omega, [w]_{A_p})$ if $0 < \alpha_0 \leq \alpha \leq \alpha_1 < 1$ and $N = N(d,p,\alpha_0,\beta, K_1, \omega, [w]_{A_p})$ if $1 < \alpha_0 \leq \alpha < 2$, respectively.
\end{theorem}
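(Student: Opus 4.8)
The plan is to globalize the localized bound of Lemma~\ref{05021011} by a partition of unity; the only genuine issue is that $\cL^a$ is non-local, so the part of $u$ supported away from a given unit ball must be handled directly. Fix a lattice $\{z_n\}\subset\bR^d$, functions $\zeta_n\in C_0^\infty(B_1(z_n))$ with uniformly bounded derivatives and $\sum_n\zeta_n^p\equiv1$, and $\eta_n\in C_0^\infty(B_4(z_n))$ with $\eta_n\equiv1$ on $B_2(z_n)$; one may arrange $\sum_n 1_{B_4(z_n)}\le K=K(d)$. Since Assumption~\ref{05031403} is translation invariant in $x$ and $[w(\cdot-z)]_{A_p(\bR^d)}=[w]_{A_p(\bR^d)}$, translating Lemma~\ref{05021011} gives, with $N$ independent of $n$, that $\|\zeta_n\,\cL^a v\|_{L_{p,w}(\bR^d)}\le N\|v\|_{H_{p,w}^\alpha(\bR^d)}$ for all $v\in H_{p,w}^\alpha(\bR^d)$. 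From $\sum_n\zeta_n^p\equiv1$,
\[
\|\cL^a u\|_{L_{p,w}(\bR^d)}^p=\sum_n\|\zeta_n\,\cL^a u\|_{L_{p,w}(\bR^d)}^p,
\]
and for each $n$ we split $\zeta_n\,\cL^a u=\zeta_n\,\cL^a(\eta_n u)+\zeta_n\,\cL^a\big((1-\eta_n)u\big)$.

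For the ``near'' term, Lemma~\ref{05021011} yields $\|\zeta_n\,\cL^a(\eta_n u)\|_{L_{p,w}(\bR^d)}\le N\|\eta_n u\|_{H_{p,w}^\alpha(\bR^d)}$, so it remains to prove $\sum_n\|\eta_n u\|_{H_{p,w}^\alpha(\bR^d)}^p\le N\|u\|_{H_{p,w}^\alpha(\bR^d)}^p$. Using the norm equivalence \eqref{eq1022_01} (see Lemma~\ref{lem10092043}) together with the identity $\partial^\alpha(\eta_n u)=\eta_n\,\partial^\alpha u+R_n u$, where $R_n u(x)=c\operatorname{P.V.}\int_{\bR^d}[(\eta_n(x+y)-\eta_n(x))u(x+y)-u(x)(\nabla\eta_n(x),y)\chi_\alpha(y)]|y|^{-d-\alpha}\,dy$, a standard localized multiplier estimate gives $\sum_n\big(\|\eta_n\,\partial^\alpha u\|_{L_{p,w}(\bR^d)}^p+\|R_n u\|_{L_{p,w}(\bR^d)}^p\big)\le N\|u\|_{H_{p,w}^\alpha(\bR^d)}^p$; here one uses that $\eta_n(\cdot+y)-\eta_n(\cdot)$ is supported where $x$ or $x+y$ lies in $B_4(z_n)$, that the bracket above is $O(|y|^2)$ near the origin (as $\eta_n\in C^2$, so the integral converges for $\alpha<2$), the bounded-overlap property $\sum_n 1_{B_4(z_n)}\le K$, and boundedness of the Hardy--Littlewood maximal operator $\cM$ on $L_{p,w}(\bR^d)$.

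For the ``far'' term, when $x\in\operatorname{supp}\zeta_n\subset B_1(z_n)$ the function $(1-\eta_n)u$ and its gradient vanish near $x$, so $\nabla_y^\alpha\big((1-\eta_n)u\big)(x)=\big((1-\eta_n)u\big)(x+y)$, which vanishes unless $x+y\notin B_2(z_n)$, and that forces $|y|\ge1$; hence, by Assumption~\ref{05031403}(i) and a dyadic decomposition of $\{|y|\ge1\}$,
\[
\big|\zeta_n(x)\,\cL^a\big((1-\eta_n)u\big)(x)\big|\le N\int_{|y|\ge1}|u(x+y)|\,\frac{dy}{|y|^{d+\alpha}}\le N\Big(\sum_{k\ge0}2^{-k\alpha}\Big)\cM u(x)\le N\cM u(x),
\]
with $N$ depending on $\alpha_0$ once $\alpha\ge\alpha_0$. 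Summing in $n$ via $\sum_n 1_{B_1(z_n)}\le K$, boundedness of $\cM$ on $L_{p,w}(\bR^d)$, and $\|u\|_{L_{p,w}(\bR^d)}\le N\|u\|_{H_{p,w}^\alpha(\bR^d)}$ (immediate from \eqref{eq1022_01}), we get $\sum_n\|\zeta_n\,\cL^a((1-\eta_n)u)\|_{L_{p,w}(\bR^d)}^p\le N\|u\|_{H_{p,w}^\alpha(\bR^d)}^p$. Combining the two pieces gives the asserted bound.

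For the uniform dependence of $N$ on the stated ranges of $\alpha$, one tracks constants throughout: the factor $2-\alpha$ in Assumption~\ref{05031403}(i) is compensated by the normalization $c(d,\alpha)\approx\alpha(2-\alpha)$ of $\partial^\alpha$, the geometric series $\sum_k 2^{-k\alpha}$ is bounded for $\alpha\ge\alpha_0$, and the multiplier estimates behind Lemma~\ref{05021011} (ultimately Proposition~\ref{05132149} for $a=a(y)$) as well as the equivalence \eqref{eq1022_01} hold with constants uniform for $\alpha\in[\alpha_0,\alpha_1]\subset(0,1)$ and for $\alpha\in[\alpha_0,2)\subset(1,2)$. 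I expect the principal obstacle to be keeping the localization compatible at once with the non-locality of $\cL^a$ and with the Muckenhoupt weight: the ``far'' operator cannot be controlled by a plain convolution inequality (convolution with an $L_1$ kernel is not bounded on $L_{p,w}$), which is why the pointwise domination by $\cM u$ --- bounded on $L_{p,w}(\bR^d)$ precisely because $w\in A_p(\bR^d)$ --- is essential, and the ``near'' term similarly requires the localized form of the multiplier estimate for $\partial^\alpha(\eta_n u)$ rather than a crude global bound.
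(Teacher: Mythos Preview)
Your approach is correct and follows the same overall architecture as the paper's proof: localize via a partition of unity, invoke Lemma~\ref{05021011} on each piece, and control the non-local interactions by pointwise maximal-function bounds. The differences are organizational rather than essential. The paper uses a \emph{continuous} partition $\int_{\bR^d}|\varphi(x-z)|^p\,dz=\text{const}$ instead of a lattice sum, and it first strips off the tail $|y|>1$ of the operator (your ``far'' term) via Lemma~\ref{05142036}, reducing to the truncated operator $\cL_1^a$. It then applies a Leibniz-type identity $\varphi(\cdot-z)\,\cL_1^a u=\cL_1^a\big(u\varphi(\cdot-z)\big)-u\,\cL_1^a\varphi(\cdot-z)-(\text{bilinear term})$ rather than splitting $u$ as $\eta_n u+(1-\eta_n)u$. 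The payoff of the paper's route is that $\cL_1^a$ applied to a function supported in $B_1(z)$ is itself supported in $B_2(z)$, so one can re-insert a cutoff and apply Lemma~\ref{05021011} without ever needing the localization estimate $\sum_n\|\eta_n u\|_{H^\alpha_{p,w}}^p\le N\|u\|_{H^\alpha_{p,w}}^p$ as a black box.

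That localization estimate is precisely where your sketch is thinnest. It is true, but ``a standard localized multiplier estimate'' hides exactly the work the paper isolates in Lemmas~\ref{05140940} and~\ref{05151257}: one must control the tails of $\partial^\alpha(\eta_n u)$ for $x$ far from $z_n$ (where $\partial^\alpha(\eta_n u)(x)=c\int(\eta_n u)(x+y)|y|^{-d-\alpha}\,dy$ has no localization in $x$), and summing these over $n$ requires either the $\ell^1\hookrightarrow\ell^p$ pointwise trick $\sum_n C_n(x)^p\le\big(\sum_n C_n(x)\big)^p$ or the decay $|\partial^\alpha\eta_n(x)|\lesssim(1+|x-z_n|)^{-d-\alpha}$ together with convergence of $\sum_n(1+|x-z_n|)^{-(d+\alpha)}$. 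Your ingredients (bounded overlap, $O(|y|^2)$ near the origin, boundedness of $\cM$ on $L_{p,w}$) are the right ones, but they do not by themselves give the summability without one of these devices; you should make that step explicit. The paper's continuous partition sidesteps this because the Minkowski inequality in $z$ replaces the $\ell^p$ sum, which is why its auxiliary lemmas are stated with an integral over $z$ rather than a lattice sum.
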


\begin{remark}
							\label{rem0928_1}
In particular, if $1 < \alpha_0 \leq \alpha < 2$, the constant $N$ is independent of $\alpha$ so that the constant $N$ does not blow up as $\alpha \nearrow 2$.
Regarding this observation, see also, for instance, \cite{MR2494809, MR2863859}.
\end{remark}

When $a = a(t,x,y)$, by using Theorem \ref{05140812} for each $t \in (0,T)$ and then integrating with respect to $t$, we obtain the following corollary for the parabolic operator.

\begin{corollary}
							\label{cor0913_01}
 Let $\beta\in(0,1)$, $\alpha\in(0,2)$, $1<p, q<\infty$, and  $w(t,x) = w_1(t)w_2(x)$, $w_1 \in A_q(\bR)$ and $w_2 \in A_p(\bR^d)$.
 Suppose that the coefficient $a(t,x,y)$ of $\cL^a$ satisfies Assumption \ref{05031403}.
Then the operator $\partial_t-\cL^a$ is continuous from
$\mathcal{H}_{p,q,w}^{1,\alpha}(T)$ to $L_{p,q,w}(T)$.
\end{corollary}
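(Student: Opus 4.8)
The plan is to reduce the parabolic statement to the elliptic boundedness in Theorem \ref{05140812} by freezing the time variable and then integrating in $t$. Fix $u \in \mathcal{H}_{p,q,w}^{1,\alpha}(T)$. By the definition of the solution space we have $u_t \in L_{p,q,w}(T)$ with $\|u_t\|_{L_{p,q,w}(T)} \le \|u\|_{\mathcal{H}_{p,q,w}^{1,\alpha}(T)}$, and $u(t,\cdot) \in H_{p,w_2}^\alpha(\bR^d)$ for a.e.\ $t \in (0,T)$; so, in view of the linearity of $\partial_t - \cL^a$ and the triangle inequality, it suffices to bound $\|\cL^a u\|_{L_{p,q,w}(T)}$ by a constant multiple of $\|u\|_{\mathcal{H}_{p,q,w}^\alpha(T)}$.

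For a.e.\ fixed $t \in (0,T)$, the function $(x,y) \mapsto a(t,x,y)$ satisfies all three parts of Assumption \ref{05031403} with the \emph{same} constants $\delta$, $K_1$, $\beta$ and the same modulus $\omega$, none of which depend on $t$. Hence Theorem \ref{05140812}, applied to the time slice $u(t,\cdot)$ and the coefficient $a(t,\cdot,\cdot)$, gives
\[
\|\cL^a u(t,\cdot)\|_{L_{p,w_2}(\bR^d)} \le N \|u(t,\cdot)\|_{H_{p,w_2}^\alpha(\bR^d)}
\]
with $N = N(d,p,\alpha,\beta,K_1,\omega,[w_2]_{A_p})$ independent of $t$ (and with the improved dependence on $\alpha$ recorded in the theorem, if one wishes to track it). Raising this to the $q$-th power, multiplying by $w_1(t)$, and integrating over $(0,T)$ yields
\[
\|\cL^a u\|_{L_{p,q,w}(T)} \le N\left(\int_0^T \|u(t,\cdot)\|_{H_{p,w_2}^\alpha(\bR^d)}^q\, w_1(t)\,dt\right)^{1/q} = N\|u\|_{\mathcal{H}_{p,q,w}^\alpha(T)} \le N\|u\|_{\mathcal{H}_{p,q,w}^{1,\alpha}(T)},
\]
which combined with the estimate for $u_t$ proves the claimed continuity.

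The only point needing some care — and the one I regard as the main, albeit minor, obstacle — is the measurability in $t$ of the map $t \mapsto \cL^a u(t,\cdot) \in L_{p,w_2}(\bR^d)$, which is what legitimizes the integration in $t$ above. Because the kernel, and hence the operator $\cL^{a(t,\cdot,\cdot)}$, genuinely varies with $t$, this is not simply the composition of a strongly measurable $H_{p,w_2}^\alpha(\bR^d)$-valued map with one fixed bounded operator. I would handle it by first checking joint measurability of $(t,x,y) \mapsto \nabla_y^\alpha u(t,x)\, a(t,x,y)\, |y|^{-d-\alpha}$ — which is immediate when $u$ is smooth in $x$ — and invoking Fubini to deduce measurability of $\cL^a u$ on $\bR^d_T$; the general case then follows by approximating $u$ in the $\mathcal{H}_{p,q,w}^{1,\alpha}(T)$-norm and using the uniform-in-$t$ bound above to pass to the limit in $L_{p,q,w}(T)$. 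All remaining details are the routine freeze-and-integrate bookkeeping indicated above.
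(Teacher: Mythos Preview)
Your proof is correct and follows essentially the same approach as the paper, which simply remarks that the corollary follows by applying Theorem~\ref{05140812} for each fixed $t\in(0,T)$ and then integrating in $t$. Your additional care about the measurability in $t$ of $\cL^a u(t,\cdot)$ is a detail the paper leaves implicit, but your treatment of it is appropriate.
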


\begin{remark}
As is easy to see, in Corollary \ref{cor0913_01} one can have any nonnegative function $w_1(t)$ instead of $w_1 \in A_q(\bR)$.
\end{remark}

By utilizing the above results, especially, when $w=1$ and $p=q$, as the second set of our main results, we prove the following $L_p$-estimates for parabolic and elliptic equations with spatial non-local operators with H\"older continuous coefficients.
As noted in the introduction, the same type of results are proved in \cite{MR3145767} but for sufficiently large $p$.
That is, we have removed the largeness assumption on $p$ in \cite{MR3145767}.
On the other hand, our boundedness results for operators in the weighted space setting also make it possible to derive weighted versions of the results below. Indeed, such results are proved recently in \cite{arXiv:2108.11840} using Theorem \ref{05140812} and Corollary \ref{cor0913_01} in this paper after detailed preparations for weighted $L_p$-estimates.

We first state the parabolic case.
Note that in the results below we do not pursue whether the constant $N$ stays bounded, for instance, as $\alpha \nearrow 2$ (see Remark \ref{rem0928_1}).
However, it is possible to investigate the dependency of $N$ on $\alpha$ as described in Theorem \ref{05140812}.
See \cite{arXiv:2108.11840} for a priori estimates with more informative dependency of $N$ on $\alpha$.

\begin{theorem}\label{05141210}
Let $\beta\in(0,1)$, $\alpha\in(0,2)$, $1<p<\infty$, and the coefficient $a$ of the operator $\cL^a$ satisfy Assumption \ref{05031403}.
Suppose that $u\in \mathcal{H}_p^{1,\alpha}(T)$ satisfies
 \begin{equation}
							\label{05132019}
\left\{\begin{aligned}
u_t-\cL^a u+\lambda u=f
\quad &\text{in} \quad \bR^d_T,
\\
u(0,x)=0, \quad & x\in \bR^d,
\end{aligned}
\right.
\end{equation}
where $f\in L_p(T).$
 Then, there exists $\lambda_0=\lambda_0(d,p,\alpha,\beta,\delta,K_1,\omega)$ such that  for any $\lambda\ge \lambda_0$,
 \begin{equation}\label{05132016}
         \|u_t\|_{L_p(T)}+\|\partial^\alpha u\|_{L_p(T)}\le N \|f\|_{L_p(T)},
 \end{equation}
 \begin{equation}\label{05132020}
     \|u\|_{L_p(T)}\le N  (T \wedge \lambda^{-1}) \|f\|_{L_p(T)},
 \end{equation}
 where $N=N(d,p,\alpha,\beta,\delta,K_1,\omega)$.
Moreover, for any $f\in L_p(T)$ there exists a unique $u\in \mathcal{H}_p^{1,\alpha}(T)$ satisfying \eqref{05132019}, \eqref{05132016}, and \eqref{05132020}.
\end{theorem}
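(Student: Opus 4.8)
The plan is to follow the standard route of the $L_p$-theory: first establish the a priori estimate \eqref{05132016} for any $u\in\mathcal{H}_p^{1,\alpha}(T)$ solving \eqref{05132019}, then deduce \eqref{05132020} from it by elementary considerations, and finally obtain existence and uniqueness via the method of continuity. Note first that Corollary \ref{cor0913_01} (or Theorem \ref{05140812} applied for each $t$) guarantees $\cL^a u\in L_p(T)$ whenever $u\in\mathcal{H}_p^{1,\alpha}(T)$, so every term in \eqref{05132019} is well defined and it makes sense to speak of solutions; moreover all the manipulations below may be justified by first approximating $u$ by smooth functions.

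For the a priori estimate I would freeze the coefficient in $x$. Fix a small radius $r>0$, take a partition of unity $\{\zeta_k\}$ subordinate to a locally finite cover of $\bR^d$ by balls $B_r(x_k)$ of bounded overlap, and write, for each $k$, the equation satisfied by $v_k:=u\zeta_k$:
\[
\partial_t v_k-\cL^{a_k}v_k+\lambda v_k=\zeta_k f+\big(\cL^a-\cL^{a_k}\big)v_k+\big(\zeta_k\cL^a u-\cL^a(\zeta_k u)\big)=:g_k,
\]
where $a_k(t,y):=a(t,x_k,y)$ no longer depends on $x$ and still satisfies Assumption \ref{05031403}. For the $x$-independent model operator $\cL^{a_k}$ one has solvability together with the estimate $\|\partial_t v_k\|_{L_p(T)}+\|\partial^\alpha v_k\|_{L_p(T)}\le N\big(\|g_k\|_{L_p(T)}+\lambda\|v_k\|_{L_p(T)}\big)$ from the theory of non-local equations with kernels depending on $(t,y)$ only (see \cite{MR2863859,MR3145767}), which holds for the \emph{full} range $p\in(1,\infty)$ precisely because no $x$-dependence is present. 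On $B_r(x_k)$ we have $|a(t,x,y)-a_k(t,y)|\le(2-\alpha)\omega(r)$ with $\omega(r)\to0$ as $r\to0$ by Remark \ref{rem1026_1}; combined with the proof of Lemma \ref{05021011} and Proposition \ref{05132149}, this gives $\|(\cL^a-\cL^{a_k})v_k\|_{L_p(T)}\le N\omega(r)\big(\|u\|_{L_p(T)}+\|\partial^\alpha u\|_{L_p(T)}\big)$ on the support of $\zeta_k$, with $N$ independent of $k$ and valid for all $p\in(1,\infty)$ -- this is the step where the new operator boundedness is essential. The non-local commutator $\zeta_k\cL^a u-\cL^a(\zeta_k u)$ is a genuinely lower-order perturbation: splitting the kernel into the regions $|y|\ge1$ and $|y|<1$ and using the boundedness of $\nabla\zeta_k$ and $\nabla^2\zeta_k$, one controls its $L_p(T)$ norm by $\varepsilon\|\partial^\alpha u\|_{L_p(T)}+N_\varepsilon\|u\|_{L_p(T)}$ for any $\varepsilon>0$. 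Raising everything to the $p$-th power, summing over $k$ using the finite overlap and \eqref{eq1022_01}, choosing $r$ small to absorb the $\omega(r)$-term, $\varepsilon$ small to absorb the $\partial^\alpha$-term, and then $\lambda_0$ large to absorb the remaining multiple of $\|u\|_{L_p(T)}$, we arrive at \eqref{05132016}.

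With \eqref{05132016} in hand, \eqref{05132020} is elementary. From the equation, $\lambda\|u\|_{L_p(T)}\le\|f\|_{L_p(T)}+\|u_t\|_{L_p(T)}+\|\cL^a u\|_{L_p(T)}$, and by Theorem \ref{05140812}, \eqref{eq1022_01}, and \eqref{05132016} the last two terms are at most $N\|f\|_{L_p(T)}+N\|u\|_{L_p(T)}$; hence, after enlarging $\lambda_0$ if needed, $\|u\|_{L_p(T)}\le N\lambda^{-1}\|f\|_{L_p(T)}$ for $\lambda\ge\lambda_0$. On the other hand, since $u(0,\cdot)=0$ we have $\|u(t,\cdot)\|_{L_p(\bR^d)}\le\int_0^t\|u_s(s,\cdot)\|_{L_p(\bR^d)}\,ds$, so H\"{o}lder's inequality in $t$ yields $\|u\|_{L_p(T)}\le T\|u_t\|_{L_p(T)}\le NT\|f\|_{L_p(T)}$. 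Combining the two bounds gives $\|u\|_{L_p(T)}\le N(T\wedge\lambda^{-1})\|f\|_{L_p(T)}$.

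For existence I would run the method of continuity along the family $\cL^{b_\theta}$, $\theta\in[0,1]$, with $b_\theta(t,x,y):=(1-\theta)c(d,\alpha)+\theta a(t,x,y)$, so that $\cL^{b_0}=\partial^\alpha$ and $\cL^{b_1}=\cL^a$; each $b_\theta$ still satisfies Assumption \ref{05031403} with constants uniform in $\theta$, so the a priori estimates \eqref{05132016}--\eqref{05132020} hold uniformly in $\theta$ with a single $\lambda_0$, while the endpoint $\theta=0$, i.e.\ $u_t-\partial^\alpha u+\lambda u=f$, is uniquely solvable in $\mathcal{H}_p^{1,\alpha}(T)$ by the classical constant-coefficient theory. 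Hence the problem is solvable at $\theta=1$, first for $f$ in a dense subclass of $L_p(T)$ and then for all $f\in L_p(T)$ by the a priori estimate; uniqueness is immediate, since a solution with $f=0$ and zero initial data satisfies \eqref{05132016}, forcing $\partial^\alpha u=0$ and $u_t=0$, hence $u\equiv0$. I expect the main obstacle to be exactly the coefficient-freezing step of the a priori estimate -- making the perturbation term $(\cL^a-\cL^{a_k})v_k$ simultaneously of small size $\omega(r)$ and controllable for every $p\in(1,\infty)$ -- which is what the operator boundedness of Lemma \ref{05021011} and Proposition \ref{05132149} provides in place of the large-$p$ arguments of \cite{MR3145767}.
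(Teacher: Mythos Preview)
Your approach---freeze the coefficient at $x_k$, apply the $x$-independent estimate from \cite{MR2863859,MR3145767}, control the difference $(\cL^a-\cL^{a_k})v_k$ via Lemma \ref{05021011}, treat the commutator as lower order, sum over a partition of unity, and close with the method of continuity---is exactly the paper's strategy; the paper merely packages the first steps into the separate small-support Lemma \ref{05141249} before globalizing. One point to correct: the bound $\|(\cL^a-\cL^{a_k})v_k\|_{L_p(T)}\le N\omega(r)\big(\|u\|_{L_p(T)}+\|\partial^\alpha u\|_{L_p(T)}\big)$ is not accurate as written, since for $x$ away from $\operatorname{supp}\zeta_k$ the difference $a(t,x,y)-a_k(t,y)$ is \emph{not} small and the nonlocal tail of $(\cL^a-\cL^{a_k})v_k$ contributes a term of order $Nr^{-\alpha}\|v_k\|_{L_p(T)}$ (cf.\ the $(1-\varphi)(\cL-\cL_0)u$ estimate in the proof of Lemma \ref{05141249}); this non-small lower-order piece is precisely what forces $\lambda_0$ large, and all such bounds must be stated in terms of $v_k$ rather than the global $u$ for the sum over $k$ to converge.
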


We also consider parabolic equations with lower-order coefficients.

\begin{theorem}\label{thm05241}
 Let $\beta\in(0,1)$, $\alpha\in(0,2)$, $1<p<\infty$ and $a=a(t,x,y)$ satisfy Assumption \ref{05031403}.
Also let $b^i(t,x)$ and $c(t,x)$ be bounded by $K_2$.
Suppose that $u\in \mathcal{H}_p^{1,\alpha}(T)$ satisfies
 \begin{equation}
							\label{05131515}
\left\{\begin{aligned}
u_t-\cL^a u+b^i D_i u I_{\alpha>1}+cu=f
\quad &\text{in} \quad \bR^d_T,
\\
u(0,x)=0, \quad & x\in \bR^d,
\end{aligned}
\right.
\end{equation}
where $f\in L_p(T)$.
 Then
 \begin{equation}\label{05132004}
     \|u\|_{\mathcal{H}_p^{1,\alpha}(T)}\le N \|f\|_{L_p(T)},
 \end{equation}
 where $N=N(d,p,\alpha,\beta,\delta,K_1,K_2,\omega,T)$. Moreover, for any $f\in L_p(T)$ there exists a unique $u\in \mathcal{H}_p^{1,\alpha}(T)$ satisfying (\ref{05131515}) and (\ref{05132004}).
\end{theorem}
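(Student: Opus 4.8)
The plan is to obtain Theorem \ref{thm05241} by perturbation off Theorem \ref{05141210}: the terms $b^iD_iu\,I_{\alpha>1}$ and $cu$ are of strictly lower order than $\cL^a$, so I would first add a large zeroth-order term $\lambda u$ to the equation, solve the resulting problem, and then recover the final estimate \eqref{05132004} by the change of unknown $u=e^{\lambda t}v$ on the finite interval $(0,T)$; this last step is exactly why the constant $N$ in \eqref{05132004} is allowed to depend on $T$.

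First I would establish an a priori estimate for the equation with the extra parameter. Fix $\lambda\ge\lambda_0$ (with $\lambda_0$ from Theorem \ref{05141210}) and assume $u\in\mathcal{H}^{1,\alpha}_p(T)$ solves $u_t-\cL^a u+b^iD_iu\,I_{\alpha>1}+cu+\lambda u=f$ with $u(0,\cdot)=0$. Writing this as $u_t-\cL^a u+\lambda u=g$ with $g:=f-b^iD_iu\,I_{\alpha>1}-cu$ and applying \eqref{05132016}--\eqref{05132020} gives
\[
\|u_t\|_{L_p(T)}+\|\partial^\alpha u\|_{L_p(T)}+\lambda\|u\|_{L_p(T)}\le N\|g\|_{L_p(T)},
\]
while $|b^i|,|c|\le K_2$ yields $\|g\|_{L_p(T)}\le\|f\|_{L_p(T)}+NK_2\,\|Du\|_{L_p(T)}I_{\alpha>1}+NK_2\|u\|_{L_p(T)}$. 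For $\alpha\le 1$ the first-order term is absent, and choosing $\lambda$ so large that $NK_2\lambda^{-1}\le 1/2$ absorbs $\|u\|_{L_p(T)}$. For $\alpha\in(1,2)$ I would use the interpolation inequality $\|Du\|_{L_p(\bR^d)}\le\varepsilon\|\partial^\alpha u\|_{L_p(\bR^d)}+N_\varepsilon\|u\|_{L_p(\bR^d)}$ (valid since $1/\alpha<1$, together with $H^1_p=W^1_p$ and \eqref{eq1022_01}), integrated in $t$; picking $\varepsilon$ small absorbs $\|\partial^\alpha u\|_{L_p(T)}$ into the left-hand side, and then $\lambda\ge\lambda_1$ large absorbs the remaining $\|u\|_{L_p(T)}$ via the $\lambda\|u\|_{L_p(T)}$ term. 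In either case one arrives at
\[
\|u_t\|_{L_p(T)}+\|\partial^\alpha u\|_{L_p(T)}+\lambda\|u\|_{L_p(T)}\le N\|f\|_{L_p(T)}
\]
with $N$ and $\lambda_1$ depending only on $d,p,\alpha,\beta,\delta,K_1,K_2,\omega$, in particular independent of $T$ and of $\lambda\ge\lambda_1$; by \eqref{eq1022_01} this bounds $\|u\|_{\mathcal{H}^{1,\alpha}_p(T)}$.

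Next I would promote this to solvability with the parameter. The operators $L_\tau u:=u_t-\cL^a u+\tau(b^iD_iu\,I_{\alpha>1}+cu)+\lambda u$, $\tau\in[0,1]$, are bounded from $\{u\in\mathcal{H}^{1,\alpha}_p(T):u(0,\cdot)=0\}$ to $L_p(T)$, uniformly in $\tau$, using Corollary \ref{cor0913_01} for $u_t-\cL^a u$ and the embedding $H^\alpha_p\hookrightarrow W^1_p$ (for $\alpha>1$) for the first-order term; $L_0$ is invertible by Theorem \ref{05141210}; and the a priori estimate above holds for all $L_\tau$ with the same constant. The method of continuity (equivalently, a contraction-mapping argument for the map sending $v$ to the Theorem \ref{05141210}-solution of $u_t-\cL^a u+\lambda u=f-b^iD_iv\,I_{\alpha>1}-cv$, taken in the norm $\|u_t\|_{L_p(T)}+\|\partial^\alpha u\|_{L_p(T)}+\lambda\|u\|_{L_p(T)}$) then yields, for every fixed $\lambda\ge\lambda_1$, a unique $u\in\mathcal{H}^{1,\alpha}_p(T)$ with $u(0,\cdot)=0$ solving $L_\lambda u=f$. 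Finally, since $\cL^a$, $D_i$, and multiplication by $c$ act in $x$ only, $u$ solves \eqref{05131515} if and only if $v:=e^{-\lambda_1 t}u$ solves the same equation with the extra term $\lambda_1 v$ and right-hand side $e^{-\lambda_1 t}f$; the previous step applies to $v$, giving $\|v\|_{\mathcal{H}^{1,\alpha}_p(T)}\le N\|e^{-\lambda_1 t}f\|_{L_p(T)}\le N\|f\|_{L_p(T)}$, and $u=e^{\lambda_1 t}v$ is then the unique solution of \eqref{05131515}; reading off the factors $e^{\lambda_1 t}$ and $\lambda_1$ from $u_t=e^{\lambda_1 t}(v_t+\lambda_1 v)$ produces \eqref{05132004} with $N=N(d,p,\alpha,\beta,\delta,K_1,K_2,\omega,T)$, and uniqueness for \eqref{05131515} follows by applying the $\lambda_1$-solvability with $f=0$ to the difference of two solutions.

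The one step that is not purely bookkeeping is the treatment of $b^iD_iu$ when $\alpha\in(1,2)$: differentiation of order $1$ is strictly subcritical for an operator of order $\alpha>1$, so interpolation splits $\|Du\|_{L_p}$ into an arbitrarily small multiple of $\|\partial^\alpha u\|_{L_p}$ plus a term that the $\lambda\|u\|_{L_p}$ gain in Theorem \ref{05141210} can swallow; for $\alpha\le 1$ this term is absent by the factor $I_{\alpha>1}$ and only the zeroth-order perturbation $cu$ survives, which is trivial to absorb. Everything else --- boundedness of the operators, the method of continuity, and the exponential substitution --- is standard.
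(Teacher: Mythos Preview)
Your argument is correct, but it follows a different route from the paper's own proof. Both arguments share the use of Theorem \ref{05141210} as the base estimate and, when $\alpha>1$, the same interpolation $\|Du\|_{L_p}\le\varepsilon\|\partial^\alpha u\|_{L_p}+N_\varepsilon\|u\|_{L_p}$ to control the drift term. The difference lies in how the remaining zeroth-order term $\|u\|_{L_p}$ is handled. You keep $\lambda$ as a free parameter and use the gain $\lambda\|u\|_{L_p(T)}$ from \eqref{05132020} to absorb it, then undo the extra $\lambda u$ by the exponential substitution $v=e^{-\lambda_1 t}u$; the $T$-dependence of the final constant enters through the factor $e^{\lambda_1 T}$. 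The paper instead fixes $\lambda=\lambda_0$ once and for all, applies Theorem \ref{05141210} on each subinterval $(0,\tau)$ to get $\|u\|_{\mathcal{H}_p^{1,\alpha}(\tau)}\le N\|f\|_{L_p(\tau)}+N\|u\|_{L_p(\tau)}$, and then removes the last term by a Gr\"onwall argument based on $\|u(t,\cdot)\|_{L_p}^p\le t^{p-1}\|u_t\|_{L_p(t)}^p$; here the $T$-dependence comes from the Gr\"onwall factor. Your treatment is arguably more streamlined (and is the approach of, e.g., \cite[Section 4.2]{MR2435520}), and you are more explicit about the existence step via the method of continuity, which the paper leaves implicit.
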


By considering elliptic equations as stationary parabolic equations, we obtain the corresponding results for elliptic equations as follows.

\begin{corollary}
 Let $\beta\in(0,1)$, $\alpha\in(0,2)$, $1<p<\infty$, and the operator $\cL^a$ be independent of the time variable with $a(x,y)$ satisfying  Assumption \ref{05031403}.
Then, there are $\lambda_0=\lambda_0(d,p,\alpha,\beta,\delta,K_1,\omega)$ and $N=N(d,p,\alpha,\beta,\delta,K_1,\omega)$ such that, for any $\lambda\ge \lambda_0$ and
 $u\in H_p^\alpha(\bR^d)$ satisfying
 \begin{equation}\label{05203}
-\cL^a u+\lambda u=f
\quad  \text{in} \quad \bR^d,
\end{equation}
we have
\begin{equation}\label{05204}
      \|\partial^\alpha u\|_{L_p(\bR^d)}+\lambda \|u\|_{L_p(\bR^d)}\le N \|f\|_{L_p(\bR^d)}.
 \end{equation}
Furthermore, for any $f\in L_p(\bR^d)$, there exists a unique $u\in H_p^{\alpha}(\bR^d)$ satisfying (\ref{05203}) and (\ref{05204}).
\end{corollary}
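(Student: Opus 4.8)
The plan is to establish the a priori estimate \eqref{05204} first, then read off uniqueness, and obtain existence by the method of continuity.

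For the a priori estimate I would take $u\in H_p^\alpha(\bR^d)$ solving \eqref{05203} and promote it to a (trivially time-dependent) parabolic solution via a time cutoff. Fix $\zeta\in C^\infty(\bR)$ with $\zeta\equiv0$ on $(-\infty,0]$, $\zeta\equiv1$ on $[1,2]$, $0\le\zeta\le1$, and set $v(t,x):=\zeta(t)u(x)$ on $(0,2)\times\bR^d$. Since the coefficient $a$ is independent of $t$, we get $v\in\mathcal{H}_p^{1,\alpha}(2)$ with $v(0,\cdot)=0$ and
\[
v_t-\cL^a v+\lambda v=\zeta'(t)u(x)+\zeta(t)\big(-\cL^a u(x)+\lambda u(x)\big)=\zeta'(t)u(x)+\zeta(t)f(x)=:g(t,x),
\]
where $\|g\|_{L_p(2)}\le N\big(\|u\|_{L_p(\bR^d)}+\|f\|_{L_p(\bR^d)}\big)$ with $N$ depending only on $p$ and $\zeta$, while $\|v\|_{L_p(2)}\ge\|u\|_{L_p(\bR^d)}$ and $\|\partial^\alpha v\|_{L_p(2)}\ge\|\partial^\alpha u\|_{L_p(\bR^d)}$ because $\zeta\equiv1$ on $[1,2]$. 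Now apply Theorem \ref{05141210} to $v$ (legitimate once $\lambda$ exceeds the $\lambda_0$ there and $\lambda\ge\tfrac12$, so $2\wedge\lambda^{-1}=\lambda^{-1}$): \eqref{05132020} gives
\[
\|u\|_{L_p(\bR^d)}\le\|v\|_{L_p(2)}\le N\lambda^{-1}\|g\|_{L_p(2)}\le N\lambda^{-1}\big(\|u\|_{L_p(\bR^d)}+\|f\|_{L_p(\bR^d)}\big),
\]
and enlarging $\lambda_0$ so that $N\lambda_0^{-1}\le\tfrac12$ lets us absorb the first term, yielding $\lambda\|u\|_{L_p(\bR^d)}\le N\|f\|_{L_p(\bR^d)}$; then \eqref{05132016} gives $\|\partial^\alpha u\|_{L_p(\bR^d)}\le N\|g\|_{L_p(2)}\le N\|f\|_{L_p(\bR^d)}$, and adding the two bounds gives \eqref{05204}. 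Uniqueness follows at once, since a difference of two solutions solves \eqref{05203} with $f=0$.

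For existence I would use the method of continuity along $\cL_\theta:=(1-\theta)\partial^\alpha+\theta\cL^a$, $\theta\in[0,1]$. Its kernel coefficient $(1-\theta)c(d,\alpha)+\theta a(x,y)$ still satisfies Assumption \ref{05031403} with constants independent of $\theta$ — the constant $c(d,\alpha)$ carries the factor $2-\alpha$, and condition (iii) at $\alpha=1$ holds by oddness of $y$ together with linearity — so by Theorem \ref{05140812} each $T_\theta u:=-\cL_\theta u+\lambda u$ is bounded from $H_p^\alpha(\bR^d)$ to $L_p(\bR^d)$, depends affinely on $\theta$, and (by the argument above applied to $\cL_\theta$) satisfies the uniform a priori bound $\|u\|_{H_p^\alpha(\bR^d)}\le N\|T_\theta u\|_{L_p(\bR^d)}$ using \eqref{eq1022_01}. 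Since $T_0$ is onto — the equation $-\partial^\alpha u+\lambda u=f$ is solved by $u=\mathcal{F}^{-1}\big[(|\xi|^\alpha+\lambda)^{-1}\widehat f\,\big]\in H_p^\alpha(\bR^d)$, the multiplier $(1+|\xi|^2)^{\alpha/2}(|\xi|^\alpha+\lambda)^{-1}$ being of Mikhlin type — the method of continuity forces $T_1$ to be onto, which is the asserted solvability; the resulting solution obeys \eqref{05204}. One could instead avoid the homotopy by averaging the parabolic solutions on $(0,n)$ against a fixed bump function in $t$ and letting $n\to\infty$, using \eqref{05132020} to kill the error term and the a priori estimate to make the averages Cauchy in $H_p^\alpha(\bR^d)$.

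The one point requiring care is the absorption in the a priori estimate: it is essential that the constant in Theorem \ref{05141210} not depend on $\lambda$, and that the time cutoff live on a fixed interval (length $2$, not shrinking with $\lambda$), so that the $\zeta'$-term contributes only $\|u\|_{L_p(\bR^d)}$ — harmlessly absorbable for large $\lambda$ — rather than a positive power of $\lambda$. The remaining steps (membership of $v$ in $\mathcal{H}_p^{1,\alpha}$, verification of Assumption \ref{05031403} for $\cL_\theta$, and the constant-coefficient base case) are routine.
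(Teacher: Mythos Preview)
Your proof is correct and follows essentially the same strategy as the paper: reduce the elliptic a priori estimate to the parabolic Theorem \ref{05141210} via a time cutoff, then obtain existence by the method of continuity with the fractional Laplacian as the base case. The only difference is a minor technical variant in the cutoff step: the paper sets $v_n=\eta(t/n)u(x)$ on $(0,n)$ and sends $n\to\infty$, so the $\eta_n'$ error term carries a factor $n^{-1}$ and disappears in the limit (hence the $\lambda_0$ of Theorem \ref{05141210} works unchanged), whereas you work on the fixed interval $(0,2)$ and absorb the $\zeta' u$ term by enlarging $\lambda_0$. Both are standard; your version is slightly more hands-on but equally valid, and your closing remark about averaging over $(0,n)$ is precisely the paper's device.
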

\begin{proof}
Let $\eta$ be a  smooth function in $\bR$ with
\[
\eta(0)=0, \quad \int_0^1 |\eta(t)|\,dt>0.
\]
Set $\eta_n(t)=\eta(t/n)$, $t\in \bR$, and
consider $v_n= u\eta_n$.
By applying Theorem \ref{05141210} with $T=n$ to $v_n$, then letting $n$ go to infinity, we obtain a priori estimate \eqref{05204}.
Then, by the method of continuity and the unique solvability of equations with simple coefficients, for instance, in \cite{MR2863859}, one can finish the proof.
\end{proof}

Throughout  the paper, we may omit $\bR^d$ in $C_0^\infty(\bR^d)$, $\mathcal{S}(\bR^d)$, or $L_p(\bR^d)$  whenever the omission is clear from the context.
Sometimes we use  `$\sup$' to represent the essential supremum.
We write $N(d,\delta,...)$ in the estimates to express that the constant $N$ is determined only by the parameters $d, \delta, \ldots$.
The constant $N$ can differ from line to line.

\section{Auxiliary results}
\label{aux_sec}

In this section, we present some auxiliary results. To make the exposition simple,  we consider $u=u(x)$ and $a=a(x,y)$. In other words, all statements in this section are independent of the time variable.
\begin{lemma}\label{0501_1}
For any $\alpha\in (0,1)$ and $f\in \cS(\bR^d)$,
\[
f(x)=N_0\int_{\bR^d}|z|^{-d+\alpha}\partial^\alpha f(x-z)\,dz,
\]
and
\[
f(x+y)-f(x)=N_0\int_{\bR^d} k^\alpha(z,y)\partial^\alpha f(x-z)\,dz,
\]
where
$$
k^\alpha(z,y)=|z+y|^{-d+\alpha}-|z|^{-d+\alpha}
$$
is an integrable function of $z$ and
\[
N_0=N_0(\alpha,d)=\frac{\Gamma((d-\alpha)/2)}{2^{\alpha}\pi^{d/2}\Gamma(\alpha/2)}.
\]
		  	\end{lemma}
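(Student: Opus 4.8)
The plan is to derive the representation from the standard fact that $-(-\Delta)^{\alpha/2}$ and the Riesz potential $I_\alpha$ of order $\alpha$ are inverse to each other on Schwartz functions, together with an explicit computation of the Riesz kernel constant. Concretely, I would start from the identity $f = I_\alpha\big((-\Delta)^{\alpha/2}f\big)$ valid for $f\in\cS(\bR^d)$ and $0<\alpha<d$ (which for $\alpha\in(0,1)$ and $d\ge 1$ is covered, and for $d=1$ one checks the borderline case separately). Writing $I_\alpha g(x) = N_0\int_{\bR^d}|z|^{-d+\alpha}g(x-z)\,dz$ with the Riesz normalization constant, and recalling $\partial^\alpha f = -(-\Delta)^{\alpha/2}f$, gives the first display once one verifies that the constant $N_0(\alpha,d) = \Gamma((d-\alpha)/2)\big/\big(2^\alpha\pi^{d/2}\Gamma(\alpha/2)\big)$ is exactly the reciprocal of the Fourier-multiplier normalization; this is a routine Gamma-function bookkeeping comparing $\widehat{|z|^{-d+\alpha}} = c\,|\xi|^{-\alpha}$ with $\widehat{(-\Delta)^{\alpha/2}f} = |\xi|^\alpha\widehat f$. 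I should be slightly careful about which sign convention for $\partial^\alpha$ is being used so that no spurious minus sign appears.

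Granting the first identity, the second is immediate: apply it at the point $x+y$ and at $x$ and subtract, which yields
\[
f(x+y)-f(x) = N_0\int_{\bR^d}\big(|z|^{-d+\alpha}\partial^\alpha f(x+y-z) - |z|^{-d+\alpha}\partial^\alpha f(x-z)\big)\,dz.
\]
In the first integral I would substitute $z \mapsto z+y$ (translation invariance of Lebesgue measure), turning the integrand into $|z+y|^{-d+\alpha}\partial^\alpha f(x-z)$, and then combine with the second integral to get the kernel $k^\alpha(z,y) = |z+y|^{-d+\alpha} - |z|^{-d+\alpha}$ against the common factor $\partial^\alpha f(x-z)$. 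To justify splitting and recombining the integral (and the substitution), I need integrability, which is the only real content beyond bookkeeping.

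The main technical point, then, is to show that $k^\alpha(\cdot,y)$ is integrable on $\bR^d$, since each piece $|z|^{-d+\alpha}$ is not integrable near infinity (nor, individually, is $|z+y|^{-d+\alpha}$), so the cancellation in the difference must be exploited. Near $z=0$ and near $z=-y$ each singularity is of order $|z|^{-d+\alpha}$ with $\alpha>0$, hence locally integrable; the issue is only the decay at infinity. For $|z|\ge 2|y|$ one has, by the mean value theorem applied to $t\mapsto |z+ty|^{-d+\alpha}$, the bound $|k^\alpha(z,y)| \le N(d,\alpha)\,|y|\,|z|^{-d+\alpha-1}$, and since $-d+\alpha-1 < -d$, this is integrable over $\{|z|\ge 2|y|\}$. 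Splitting $\bR^d$ into $\{|z|<2|y|\}$ (where one uses the triangle inequality and local integrability of the two powers separately) and $\{|z|\ge 2|y|\}$ (where one uses the gradient bound) gives $k^\alpha(\cdot,y)\in L_1(\bR^d)$, which legitimizes all the manipulations above. This integrability estimate is the step I expect to require the most care, though it is entirely elementary.
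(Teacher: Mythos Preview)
Your proposal is correct and follows essentially the same route as the paper: the first identity is the Riesz potential inversion $f=I_\alpha(-\partial^\alpha f)$, read off from the Fourier formula $\cF^{-1}[|\xi|^{-\alpha}]=N_0|x|^{-d+\alpha}$, and the second identity follows by translation and subtraction. The paper's own proof is a two-line sketch that cites \cite{MR736974} for the value of $N_0$ and for the integrability of $k^\alpha(\cdot,y)$, whereas you supply the mean-value-theorem argument for integrability explicitly; your remark about a ``borderline case'' when $d=1$ is unnecessary, since $\alpha\in(0,1)$ already gives $\alpha<d$.
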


\begin{proof}
The first assertion follows from \begin{equation}\label{eq09251339}
\cF^{-1}[|\xi|^{-\alpha}](x)=N_0|x|^{d-\alpha},
\end{equation}
where $\cF^{-1}$ is the inverse Fourier transform.
See Lemma 2.1 in \cite{MR736974} for the constant $N_0$ and the integrability of $k^\alpha$.	
\end{proof}

\begin{lemma}\label{lem09252211}
Let $\gamma\in (0,2)$, $R \in (0,\infty)$, and $u \in L_{1,\operatorname{loc}}(\bR^d)$.
Then
\begin{equation}\label{eq09252152}
\int_{B_R^c}|u(x+y)|\frac{dy}{|y|^{d+\gamma}}\le N R^{-\gamma} \gamma^{-1}\cM u(x)
\end{equation}
and
\begin{equation*}
\int_{B_R}|u(x+y)|\frac{dy}{|y|^{d-\gamma}}\le N {R^{\gamma}}\gamma^{-1}\cM u(x),
\end{equation*}
where $N$ depends only on $d$.
\end{lemma}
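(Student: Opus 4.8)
\textbf{Proof proposal for Lemma \ref{lem09252211}.}

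The plan is to reduce both inequalities to the standard fact that averages of $|u|$ over balls centered at $x$ are controlled by $\cM u(x)$, namely $\int_{B_r(x)}|u|\,dy \le N(d)\, r^d\, \cM u(x)$ for every $r>0$. First I would handle the exterior estimate \eqref{eq09252152}. Changing variables $z = x+y$ is cosmetic; the point is to decompose the complement $B_R^c$ into dyadic annuli $A_j = \{2^j R \le |y| < 2^{j+1}R\}$ for $j = 0,1,2,\dots$. On each $A_j$ the kernel $|y|^{-d-\gamma}$ is comparable to $(2^j R)^{-d-\gamma}$, so
\[
\int_{A_j}|u(x+y)|\frac{dy}{|y|^{d+\gamma}} \le N(d)\,(2^j R)^{-d-\gamma}\int_{B_{2^{j+1}R}(x)}|u|\,dy \le N(d)\,(2^jR)^{-d-\gamma}(2^{j+1}R)^d\,\cM u(x),
\]
which simplifies to $N(d)\,2^{-j\gamma}R^{-\gamma}\cM u(x)$. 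Summing the geometric series $\sum_{j\ge 0} 2^{-j\gamma} = (1-2^{-\gamma})^{-1}$ and noting that $1 - 2^{-\gamma} \ge c\,\gamma$ for $\gamma \in (0,2)$ (since $1-2^{-\gamma}$ is concave and vanishes linearly at $\gamma = 0$) gives the claimed bound $N R^{-\gamma}\gamma^{-1}\cM u(x)$.

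For the interior estimate the argument is symmetric: decompose $B_R$ into the dyadic annuli $A_j' = \{2^{-j-1}R \le |y| < 2^{-j}R\}$ for $j \ge 0$, on which $|y|^{-d+\gamma}$ is comparable to $(2^{-j}R)^{-d+\gamma}$. Then
\[
\int_{A_j'}|u(x+y)|\frac{dy}{|y|^{d-\gamma}} \le N(d)\,(2^{-j}R)^{-d+\gamma}\int_{B_{2^{-j}R}(x)}|u|\,dy \le N(d)\,(2^{-j}R)^{\gamma}\,\cM u(x),
\]
and summing $\sum_{j\ge0} 2^{-j\gamma} \le N \gamma^{-1}$ as before yields $N R^{\gamma}\gamma^{-1}\cM u(x)$.

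I expect no serious obstacle here; the only mildly delicate point is bookkeeping the $\gamma^{-1}$ factor correctly, i.e. verifying the elementary inequality $(1-2^{-\gamma})^{-1} \le N\gamma^{-1}$ uniformly for $\gamma \in (0,2)$, which follows from $1 - 2^{-\gamma} = 1 - e^{-\gamma\ln 2} \ge \gamma \ln 2 \cdot e^{-2\ln 2} = \tfrac{\ln 2}{4}\gamma$ on that range. Everything else is a routine dyadic decomposition against the maximal function.
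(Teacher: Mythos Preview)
Your proposal is correct and is essentially identical to the paper's proof: both use the same dyadic annular decomposition (annuli $\{2^j R\le |y|<2^{j+1}R\}$ for the exterior integral and $\{2^{-j-1}R\le |y|<2^{-j}R\}$ for the interior), bound each piece by a ball average controlled by $\cM u(x)$, and sum the resulting geometric series, invoking $(1-2^{-\gamma})^{-1}\le N\gamma^{-1}$. Your explicit justification of this last elementary inequality is a small bonus that the paper leaves implicit.
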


\begin{proof}
Denote $B_j = B_{2^jR}$ for $j=0,1,\dots$, and observe that
\begin{align*}
    &\int_{B_R^c}|u(x+y)|\frac{dy}{|y|^{d+\gamma}} =\sum_{j=0}^\infty \int_{B_{j+1}\setminus B_j}|u(x+y)|\frac{dy}{|y|^{d+\gamma}}
\\
&\le\sum_{j=0}^\infty R^{-(d+\gamma)}2^{-j(d+\gamma)} \int_{B_{j+1}\setminus B_j}|u(x+y)|\,dy \\
&\le N R^{-\gamma} \sum_{j=0}^\infty 2^{-\gamma j} \dashint_{B_{j+1}} |u(x+y)|\,dy\\
    &\le N R^{-\gamma} (1-2^{-\gamma})^{-1}\cM u(x) \le N R^{-\gamma} \gamma^{-1}\cM u(x),
\end{align*}
which shows the first inequality \eqref{eq09252152}.
For the second inequality, we set $B_j = B_{2^{-j}R}$ for $j=0,1,\dots$, and proceed similarly as above with $B_j \setminus B_{j+1}$ in place of $B_{j+1} \setminus B_j$.
\end{proof}

The following lemma is a weighted-$L_p$ norm inequality for multiplier operators in \cite{MR542885}.
From the proofs there, one can see that the constant $N$ depends only on the parameters described below.
As usual, we denote a multi-index by $\gamma = (\gamma_1,\ldots,\gamma_d)$ with $|\gamma| = \gamma_1+\cdots+\gamma_d$, where $\gamma_i$ is a nonnegative integer.

\begin{lemma}\label{10142048}
Let $m$ be a bounded multiplier of an operator $T$ defined on $\cS(\bR^d)$.
If there is a constant $C$ such that for any $|\gamma|\le d$,
\begin{equation}
							\label{eq1022_02}
\sup_{R>0}\left(R^{2|\gamma|-d}\int_{R<|\xi|<2R}|D^{\gamma}m(\xi)|^2\,d\xi\right)^{1/2}\le C,
\end{equation}
then for any $p\in(1,\infty)$ and $w\in A_p(\bR^d)$, there exists a constant $N=N(d,p,C,[w]_{A_p})$  such that
\[
\|Tf\|_{L_{p,w}}\le N\|f\|_{L_{p,w}}
\]
for any $f \in \cS(\bR^d)$.
	\end{lemma}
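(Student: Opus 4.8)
The plan is to recognize Lemma~\ref{10142048} as a weighted Hörmander--Mikhlin multiplier theorem and to prove it by showing that $T$ is a Calderón--Zygmund operator whose $L_2$ bound and kernel constant are controlled by $d$ and $C$, after which the weighted estimate follows from the classical Coifman--Fefferman theory. A preliminary observation is that condition \eqref{eq1022_02} already forces $\|m\|_{L_\infty}\le NC$: rescaling the annulus $\{R<|\xi|<2R\}$ to the fixed annulus $\{1<|\xi|<2\}$ converts \eqref{eq1022_02} into a uniform bound on $\sum_{|\gamma|\le d}\|D^\gamma(m(R\,\cdot))\|_{L_2(\{1<|\xi|<2\})}$, and since $d>d/2$, Sobolev embedding gives $\|m\|_{L_\infty(\{R<|\xi|<2R\})}\le NC$ for every $R>0$. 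In particular $T$ is bounded on $L_2(\bR^d)$ with norm at most $NC$ by Plancherel.

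Next I would set up a Littlewood--Paley decomposition: fix $\varphi\in C_0^\infty(\bR^d)$ supported in $\{1/2<|\xi|<2\}$ with $\sum_{j\in\bZ}\varphi(2^{-j}\xi)=1$ for $\xi\ne0$, and put $m_j(\xi)=m(\xi)\varphi(2^{-j}\xi)$, $K_j=\cF^{-1}[m_j]$, so that $m=\sum_j m_j$ and $K=\sum_j K_j$ is the kernel of $T$. By the Leibniz rule, \eqref{eq1022_02} gives $\|D^\gamma m_j\|_{L_2}\le NC\,2^{j(d/2-|\gamma|)}$ for all $|\gamma|\le d$, hence by Plancherel $\||x|^{|\gamma|}K_j\|_{L_2}\le NC\,2^{j(d/2-|\gamma|)}$, and the same bound applied to $\xi_k m_j$ yields $\||x|^{|\gamma|}\nabla K_j\|_{L_2}\le NC\,2^{j(d/2-|\gamma|+1)}$ for $|\gamma|\le d-1$. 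Splitting $\bR^d$ into $\{|x|\le 2^{-j}\}$ and $\{|x|>2^{-j}\}$ and applying Cauchy--Schwarz with the weights $1$ and $|x|^d$, respectively (the latter integrable at infinity precisely because $d>d/2$), produces the $j$-uniform bounds $\|K_j\|_{L_1}\le NC$, $\|\nabla K_j\|_{L_1}\le NC\,2^j$, and $\int_{|x|>\rho}|K_j(x)|\,dx\le NC\,(2^j\rho)^{-d/2}$ for $\rho\ge 2^{-j}$. Together these give $\int_{\bR^d}|K_j(x-y)-K_j(x)|\,dx\le NC\min\{1,2^j|y|\}$ for all $y\ne0$.

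Summing over $j$, split at the scale $2^j\sim|y|^{-1}$: for $y\ne0$,
\[
\int_{|x|>2|y|}|K(x-y)-K(x)|\,dx\le\sum_{2^j|y|\le 1}NC\,2^j|y|+\sum_{2^j|y|>1}\int_{|x|>|y|}\big(|K_j(x-y)|+|K_j(x)|\big)\,dx\le NC,
\]
using $\int_{|x|>|y|}|K_j(x)|\,dx\le NC\,(2^j|y|)^{-d/2}$ for the second sum and summing the resulting geometric series; the first sum is geometric as well. Thus $T$ is bounded on $L_2$ and its kernel satisfies the Hörmander regularity condition, with constants controlled by $d$ and $C$ (no pointwise size bound on $K$ is needed). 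From here the weighted estimate is standard: the Calderón--Zygmund decomposition gives the weak-$(1,1)$ bound for $T$, and then the Coifman--Fefferman good-$\lambda$ inequality comparing $Tf$ with $\cM f$, together with Muckenhoupt's inequality $\|\cM f\|_{L_{p,w}}\le N\|f\|_{L_{p,w}}$ for $w\in A_p$, yields $\|Tf\|_{L_{p,w}}\le N\|f\|_{L_{p,w}}$ for all $p\in(1,\infty)$ and $w\in A_p(\bR^d)$, with $N=N(d,p,C,[w]_{A_p})$; one argues first with the truncated multipliers $\sum_{|j|\le M}m_j$ and passes to the limit using the uniform bounds, which is legitimate on $\cS(\bR^d)$. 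An equivalent route, closer to \cite{MR542885}, replaces this last step by a pointwise domination of the Littlewood--Paley square function of $Tf$ by a larger square function of $f$, combined with the weighted Littlewood--Paley inequalities, which hold for $A_p$ weights.

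The main obstacle is the kernel bookkeeping in the middle two steps --- in particular, extracting the Hörmander smoothness condition uniformly in $y$ by correctly splitting the dyadic sum at $2^j\sim|y|^{-1}$ and by using that $d$ exceeds $d/2$ to make the weighted $L_2$ tail estimates, and hence the geometric series, converge. Once $T$ is identified as a Calderón--Zygmund operator with $d$- and $C$-controlled constants, the passage to weighted $L_p$ spaces for arbitrary $A_p$ weights is entirely classical.
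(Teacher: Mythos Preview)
The paper does not actually prove this lemma; it simply cites Kurtz--Wheeden \cite{MR542885} and remarks that inspecting the proofs there shows the constant depends only on $d$, $p$, $C$, and $[w]_{A_p}$. Your proposal therefore supplies a genuine argument where the paper offers none, and the argument is correct: the passage from \eqref{eq1022_02} to a uniform $L_\infty$ bound on $m$ via Sobolev embedding on annuli, the dyadic kernel estimates $\||x|^{|\gamma|}K_j\|_{L_2}\le NC\,2^{j(d/2-|\gamma|)}$ and their gradient counterparts, and the summation over $j$ split at $2^j|y|\sim 1$ to obtain the H\"ormander condition are all standard and correctly executed. The concluding appeal to Coifman--Fefferman for $A_p$ weights is exactly the classical route.

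As you yourself note at the end, the original Kurtz--Wheeden argument proceeds somewhat differently---it works with weighted Littlewood--Paley square functions directly rather than first establishing the H\"ormander kernel condition and then invoking Calder\'on--Zygmund/Coifman--Fefferman. Your route has the advantage of cleanly isolating the Calder\'on--Zygmund structure (so the weighted theory enters as a black box), while the square-function route is more self-contained and avoids any delicate justification that $K=\sum_j K_j$ really is the distributional kernel of $T$ away from the diagonal. Either way, the constants depend only on $d$, $p$, $C$, and $[w]_{A_p}$, which is the point the paper needs.
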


The next lemma is about the equivalence of the $H_{p,w}^\alpha$ norms stated in \eqref{eq1022_01}.

\begin{lemma}\label{lem10092043}
	Let $\alpha\in[0,\alpha_0]$ and $1<p<\infty$.
Then for any $w\in A_p(\bR^d)$,  there exists a constant $N=N(d,p,\alpha_0,[w]_{A_p})$ such that
	\begin{equation}\label{10092050}
	N^{-1}(\|u\|_{L_{p,w}}+\|\partial^\alpha u\|_{L_{p,w}})\le \|u\|_{H_{p,w}^\alpha}\le N(\|u\|_{L_{p,w}}+\|\partial^\alpha u\|_{L_{p,w}})
	\end{equation}
for any $u \in H_{p,w}^\alpha(\bR^d)$.
\end{lemma}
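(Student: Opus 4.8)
The plan is to prove the equivalence \eqref{10092050} by showing that the relevant ratio operators --- those converting between $(1-\Delta)^{\alpha/2}$ and $1 + \partial^\alpha$ --- are bounded on $L_{p,w}$ via the multiplier criterion in Lemma \ref{10142048}. Concretely, since $\|u\|_{H_{p,w}^\alpha} = \|(1-\Delta)^{\alpha/2}u\|_{L_{p,w}}$, it suffices to establish that the Fourier multipliers
\[
m_1(\xi) = \frac{1+|\xi|^\alpha}{(1+|\xi|^2)^{\alpha/2}}, \qquad m_2(\xi) = \frac{(1+|\xi|^2)^{\alpha/2}}{1+|\xi|^\alpha}
\]
both satisfy the Hörmander–Mikhlin-type bound \eqref{eq1022_02} with a constant $C = C(d,\alpha_0)$ uniform in $\alpha \in [0,\alpha_0]$. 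Indeed, $\|\partial^\alpha u\|_{L_{p,w}} \le \|u\|_{L_{p,w}} + \|\partial^\alpha u\|_{L_{p,w}}$ corresponds (up to the elementary identity $\widehat{\partial^\alpha u}(\xi) = -|\xi|^\alpha \hat u(\xi)$) to applying the operator with multiplier $m_1$ to $(1-\Delta)^{\alpha/2}u$, giving the left inequality; the right inequality follows by applying $m_2$ to the function whose transform is $(1+|\xi|^\alpha)\hat u(\xi)$, i.e.\ to $u + (-\partial^\alpha)u$ (one should be slightly careful about signs and about the fact that $1+|\xi|^\alpha$ is not literally the symbol of $1 - \partial^\alpha$ only if one tracks the sign convention, but $\widehat{(1-\partial^\alpha)u} = (1+|\xi|^\alpha)\hat u$, so it matches exactly). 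A standard density argument extends the bound from $\cS(\bR^d)$ to all of $H_{p,w}^\alpha(\bR^d)$.

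The key step is therefore the multiplier estimate, and I would carry it out as follows. Write $m_1(\xi) = g(|\xi|)$ with $g(r) = (1+r^\alpha)(1+r^2)^{-\alpha/2}$, and similarly for $m_2$. These are radial, smooth away from the origin, and I need to control $R^{2|\gamma|-d}\int_{R<|\xi|<2R}|D^\gamma m_i(\xi)|^2\,d\xi$ for $|\gamma| \le d$. Since the annulus has volume $\sim R^d$, this reduces to the pointwise scale-invariant bound $|\xi|^{|\gamma|}|D^\gamma m_i(\xi)| \le C$ for $|\xi| \sim R$, which in the radial case amounts to $r^k |g^{(k)}(r)| \le C$ for $0 \le k \le d$, uniformly in $r > 0$ and in $\alpha \in [0,\alpha_0]$. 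One checks this by splitting into the regime $r \le 1$ and $r \ge 1$: for $r \le 1$ the factor $1+r^\alpha$ and its derivatives are benign because $r^k \cdot (r^\alpha)^{(k)} = \alpha(\alpha-1)\cdots(\alpha-k+1) r^\alpha$ is bounded by a constant depending only on $k \le d$ (note $\alpha \le \alpha_0 < 2$, actually $\alpha$ can be any value in $[0,\alpha_0]$ and the bound is uniform), while $(1+r^2)^{-\alpha/2}$ is smooth and bounded with bounded scale-invariant derivatives; for $r \ge 1$ one factors $m_1(\xi) = \big(r^{-\alpha}+1\big)\big(1+r^{-2}\big)^{-\alpha/2}\cdot r^\alpha \cdot r^{-\alpha}$ — more cleanly, write $m_1 = \frac{1+r^\alpha}{(1+r^2)^{\alpha/2}} = \frac{r^{-\alpha}+1}{(r^{-2}+1)^{\alpha/2}}$ for $r\ge 1$, and now both numerator and denominator are smooth functions of $s = r^{-1} \in (0,1]$ bounded away from zero, so the chain rule plus the uniform bounds on derivatives of $s \mapsto s^\alpha$ on $[0,1]$ finish it. The case $m_2$ is handled identically since $1+r^\alpha$ is bounded below by $1$, so $m_2$ is a smooth function of the same building blocks with a denominator bounded away from zero.

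The main obstacle — and the reason the statement is worth recording with the explicit uniform constant — is keeping every estimate uniform in $\alpha$ as $\alpha \to 0$ and as $\alpha \to \alpha_0$, in particular ensuring no factor of $\alpha^{-1}$ or $(2-\alpha)^{-1}$ sneaks in. The potential trouble spots are: (a) derivatives of $r \mapsto r^\alpha$ near $r = 0$, where $r^{k}(r^\alpha)^{(k)} = [\alpha]_k\, r^\alpha$ with $[\alpha]_k = \alpha(\alpha-1)\cdots(\alpha-k+1)$, which is bounded by $\prod_{j=0}^{k-1}(j+\alpha_0) \le \prod_{j=0}^{d-1}(j+2)$ — bounded, and crucially it vanishes, not blows up, as $\alpha \to 0$; and (b) the denominator $(1+r^2)^{\alpha/2}$ or $1+r^\alpha$ degenerating — but both stay $\ge 1$, so there is no loss. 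Hence no bad $\alpha$-dependence arises, and the constant $C$ in \eqref{eq1022_02}, and therefore $N$ in \eqref{10092050}, depends only on $d$, $p$, $\alpha_0$, and $[w]_{A_p}$. A minor technical point to dispatch at the end: $\cS$ is dense in $H_{p,w}^\alpha$ (true since $w \in A_p$), so the inequalities established for Schwartz functions pass to the whole space, and the case $\alpha = 0$ is trivial with both sides equal to $2\|u\|_{L_{p,w}}$ up to the constant.
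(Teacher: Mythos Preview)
Your approach is essentially the same as the paper's: both invoke the weighted H\"ormander--Mikhlin criterion (Lemma~\ref{10142048}) on the ratio symbols between $(1+|\xi|^2)^{\alpha/2}$ and $1+|\xi|^\alpha$, and verify the scale-invariant derivative bounds $|\xi|^{|\gamma|}|D^\gamma m(\xi)|\le C(d,\alpha_0)$ uniformly in $\alpha$. One small imprecision: applying your $m_1=\frac{1+|\xi|^\alpha}{(1+|\xi|^2)^{\alpha/2}}$ controls $\|u-\partial^\alpha u\|_{L_{p,w}}$, not $\|u\|_{L_{p,w}}+\|\partial^\alpha u\|_{L_{p,w}}$ directly; the paper handles this by checking the two pieces $(1+|\xi|^2)^{-\alpha/2}$ and $|\xi|^\alpha(1+|\xi|^2)^{-\alpha/2}$ separately, which your same derivative analysis covers without change.
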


\begin{proof}
To prove \eqref{10092050}, it suffices to show that, for $u \in \cS(\bR^d)$,
\[
\|(1-\Delta)^{-\alpha/2}u\|_{L_{p,w}} \leq N \|u\|_{L_{p,w}}, \quad \|(-\Delta)^{\alpha/2}(1-\Delta)^{-\alpha/2}u\|_{L_{p,w}} \leq N \|u\|_{L_{p,w}}
\]
for the first inequality,
and
\[
\|(1-\Delta)^{\alpha/2}(1+(-\Delta)^{\alpha/2})^{-1}u\|_{L_{p,w}} \leq N \|u\|_{L_{p,w}}
\]
for the second inequality.
We set
\[
m_1(\xi) = (1 + |\xi|^2)^{\alpha/2}, \quad m_2(\xi) = 1 + |\xi|^{\alpha}.
\]
Then, by differentiating, one can check that the following multiplies
\[
\frac{1}{m_1(\xi)}, \quad \frac{|\xi|^\alpha}{m_1(\xi)}, \quad \frac{m_1(\xi)}{m_2(\xi)}
\]
satisfy the assumptions of Lemma \ref{10142048}, especially, the inequality \eqref{eq1022_02}.
For instance, we use
\[
|D^\gamma m_1|\le N (1+ |\xi|^{2})^{(\alpha-|\gamma|)/2}, \quad |D^\gamma m_2|\le N|\xi|^{\alpha-|\gamma|}+1_{|\gamma|=0},
\]
\[
\left|D^\gamma \left(\frac{1}{m_2(\xi)}\right)\right| \leq N |\xi|^{-|\gamma|} 1_{|\xi|\leq 2} + N |\xi|^{-\alpha-|\gamma|} 1_{|\xi| \geq 1}
\]
to obtain that
\[
\left|D^\gamma\left(\frac{m_1(\xi)}{m_2(\xi)}\right)\right| \leq N |\xi|^{-|\gamma|},
\]
where $N = N(d, \alpha_0, \gamma)$.
In particular, $N$ can be chosen depending on $\alpha_0$ instead of $\alpha$.
The lemma is proved.
\end{proof}

The following lemma is a classical result for the Riesz transforms and the Hilbert transform (see, for instance, \cite{MR2367098}).

\begin{lemma}\label{05191046}
		Let $p\in (1,\infty)$ and $w\in A_p(\bR^d)$. There exists $N=N(d,p,[w]_{A_p})$ such that, for  any $u\in L_{p,w}(\bR^d)$,
		\[
		\|\partial^{-1} \nabla u\|_{L_{p,w}}\le N\|u\|_{L_{p,w}}.
		\]
\end{lemma}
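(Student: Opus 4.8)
\textbf{Proof plan for Lemma \ref{05191046}.}
The claim is precisely that the operator $\partial^{-1}\nabla = (-\Delta)^{-1/2}\nabla$, whose components are (up to normalizing constants) the Riesz transforms $R_j$, is bounded on $L_{p,w}(\bR^d)$. So the plan is simply to identify the multiplier and invoke the weighted multiplier theorem already available to us, namely Lemma \ref{10142048}. First I would record that, on $\cS(\bR^d)$, the $j$-th component of $\partial^{-1}\nabla u$ has Fourier multiplier
\[
m_j(\xi) = \frac{i\xi_j}{|\xi|},
\]
which is bounded by $1$ and is smooth away from the origin. Then I would check the Hörmander-type condition \eqref{eq1022_02}: since $m_j$ is homogeneous of degree $0$, each derivative $D^\gamma m_j(\xi)$ is homogeneous of degree $-|\gamma|$, so $|D^\gamma m_j(\xi)| \le N(d,\gamma)|\xi|^{-|\gamma|}$ for $\xi \neq 0$, and consequently
\[
R^{2|\gamma|-d}\int_{R<|\xi|<2R}|D^\gamma m_j(\xi)|^2\,d\xi \le N R^{2|\gamma|-d}\int_{R<|\xi|<2R}|\xi|^{-2|\gamma|}\,d\xi \le N,
\]
with $N$ independent of $R$. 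This holds for every multi-index with $|\gamma|\le d$, so the hypotheses of Lemma \ref{10142048} are met with a constant $C=C(d)$, and that lemma yields $\|R_j f\|_{L_{p,w}} \le N(d,p,[w]_{A_p})\|f\|_{L_{p,w}}$ for $f\in\cS(\bR^d)$. Summing over $j=1,\dots,d$ gives the stated bound on $\cS(\bR^d)$, and a density argument ($\cS(\bR^d)$ is dense in $L_{p,w}(\bR^d)$ for $w\in A_p$) extends it to all $u\in L_{p,w}(\bR^d)$, interpreting $\partial^{-1}\nabla u$ as the $L_{p,w}$-limit of $\partial^{-1}\nabla u_n$ for $u_n\to u$ in $L_{p,w}$.

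There is essentially no obstacle here: this is a textbook fact (the weighted boundedness of Riesz transforms for $A_p$ weights, as cited via \cite{MR2367098}), and the only mild point worth stating carefully is that $m_j$ is not smooth at the origin, which is harmless because condition \eqref{eq1022_02} only tests $m_j$ on dyadic annuli bounded away from $0$. Alternatively, one could cite the $A_p$ boundedness of Calderón–Zygmund operators directly, since the $R_j$ are the prototypical examples; I would mention this as the one-line alternative but carry out the multiplier-theorem route since Lemma \ref{10142048} is already set up in the paper and keeps the dependence of $N$ transparent. The one-dimensional statement about the Hilbert transform is the special case $d=1$, so no separate argument is needed.
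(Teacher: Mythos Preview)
Your proposal is correct. The paper does not actually prove this lemma: it simply states that this is a classical result for the Riesz transforms and the Hilbert transform and cites \cite{MR2367098}. Your route---identifying the multiplier $m_j(\xi)=i\xi_j/|\xi|$, observing that homogeneity of degree zero gives $|D^\gamma m_j(\xi)|\le N(d,\gamma)|\xi|^{-|\gamma|}$, and then invoking Lemma \ref{10142048}---is a valid and entirely self-contained derivation using a tool already set up in the paper. Compared with the paper's bare citation, your approach has the advantage of making the dependence $N=N(d,p,[w]_{A_p})$ explicit through Lemma \ref{10142048}, and it avoids appealing to an external reference; the paper's approach has the advantage of brevity for what is indeed a textbook fact. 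Either is acceptable here.
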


\begin{lemma}\label{05140940}
Let $\alpha\in(0,2)$, $p\in (1,\infty)$, and $w\in A_p(\bR^d)$.
There exists a constant $N=N(d,p,\alpha,[w]_{A_p})$ such that, for any $u\in H_{p,w}^\alpha(\bR^d)$, $\varphi\in C_0^\infty(\bR^d)$,
we have
\begin{align}
        \label{eq092506}
& \int_{\bR^d}\int_{\bR^d} \left(\int_{\bR^d}|u(x+y)-u(x)||\varphi(x+y-z)-\varphi(x-z)|\frac {2-\alpha} {|y|^{d+\alpha}}\,dy\right)^p w(x) \,dx\,dz\notag\\
&\le N\|u\|_{H_{p,w}^{\alpha}}^p \|\varphi\|_{H_p^1}^p.
\end{align}
Moreover, the constant $N$ can be chosen so that $N = N(d,p,\alpha_0, \alpha_1,[w]_{A_p})$ if $0 < \alpha_0 \leq \alpha \leq \alpha_1 < 1$ and $N = N(d,p,[w]_{A_p})$ if $1 \leq \alpha < 2$, respectively.
\end{lemma}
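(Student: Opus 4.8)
The plan is to reduce \eqref{eq092506}, via Minkowski's inequality in the $z$-variable, to a pointwise bound of the inner $y$-integral by Hardy--Littlewood maximal functions of $u$ and of $\nabla u$, and then to apply the weighted maximal function inequality together with the multiplier bound of Lemma~\ref{10142048}. By a routine density argument (approximating in $H^\alpha_{p,w}$ and using Fatou's lemma) it suffices to take $u\in C_0^\infty(\bR^d)$. For fixed $x$, the inner double integral in \eqref{eq092506} is the $L_p(\bR^d,dz)$-norm of $F(x,z):=\int_{\bR^d}|u(x+y)-u(x)|\,|\varphi(x+y-z)-\varphi(x-z)|\,(2-\alpha)|y|^{-d-\alpha}\,dy$. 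Pulling this norm inside the $y$-integral by Minkowski's integral inequality and using translation invariance of Lebesgue measure, $\|\varphi(x+y-\cdot)-\varphi(x-\cdot)\|_{L_p(dz)}=\|\varphi(\cdot+y)-\varphi\|_{L_p}$, which is independent of $x$. Hence, by Tonelli's theorem, the left-hand side of \eqref{eq092506} is bounded by $\int_{\bR^d}\big(\int_{\bR^d}|u(x+y)-u(x)|\,\|\varphi(\cdot+y)-\varphi\|_{L_p}\,(2-\alpha)|y|^{-d-\alpha}\,dy\big)^p w(x)\,dx$, and from $\varphi(x+y)-\varphi(x)=\int_0^1(\nabla\varphi(x+sy),y)\,ds$ we obtain $\|\varphi(\cdot+y)-\varphi\|_{L_p}\le\min\big(|y|\,\|\nabla\varphi\|_{L_p},\,2\|\varphi\|_{L_p}\big)\le N\min(|y|,1)\,\|\varphi\|_{H^1_p}$.

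Next, split the $y$-integral at $|y|=1$. On $\{|y|>1\}$ use $\|\varphi(\cdot+y)-\varphi\|_{L_p}\le N\|\varphi\|_{H^1_p}$ and $|u(x+y)-u(x)|\le|u(x+y)|+|u(x)|$; by the first inequality of Lemma~\ref{lem09252211} (with $R=1$, $\gamma=\alpha$) this part is $\le N\tfrac{2-\alpha}{\alpha}\|\varphi\|_{H^1_p}\big(\cM u(x)+|u(x)|\big)$. On $\{|y|\le1\}$ use $\|\varphi(\cdot+y)-\varphi\|_{L_p}\le|y|\,\|\varphi\|_{H^1_p}$, reducing the near part to $\|\varphi\|_{H^1_p}(2-\alpha)\int_{|y|\le1}|u(x+y)-u(x)|\,|y|^{-(d+\alpha-1)}\,dy$. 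If $\alpha\in(0,1)$, the exponent $d+\alpha-1$ is below $d$; bounding $|u(x+y)-u(x)|\le|u(x+y)|+|u(x)|$ and using the second inequality of Lemma~\ref{lem09252211} (with $R=1$, $\gamma=1-\alpha$) bounds this by $N\tfrac{2-\alpha}{1-\alpha}\|\varphi\|_{H^1_p}\big(\cM u(x)+|u(x)|\big)$. If $\alpha\in[1,2)$, so that $u\in H^1_{p,w}$, write $u(x+y)-u(x)=\big(u(x+y)-u(x)-(\nabla u(x),y)\big)+(\nabla u(x),y)$; integrating $|y|^{-(d+\alpha-2)}$ over $\{|y|\le1\}$ controls the linear term by $N(2-\alpha)^{-1}|\nabla u(x)|$, while the remainder, which is $\le|y|\int_0^1|\nabla u(x+sy)-\nabla u(x)|\,ds$ by the fundamental theorem of calculus, is handled by Fubini in $(s,y)$ and then, for each fixed $s$, the dilation $z=sy$, which turns $\int_{|y|\le1}|\nabla u(x+sy)|\,|y|^{-(d-(2-\alpha))}\,dy$ into $s^{-(2-\alpha)}\int_{|z|\le s}|\nabla u(x+z)|\,|z|^{-(d-(2-\alpha))}\,dz\le N(2-\alpha)^{-1}\cM(|\nabla u|)(x)$ by the second inequality of Lemma~\ref{lem09252211} (with $R=s$, $\gamma=2-\alpha$), \emph{uniformly in $s\in(0,1)$}. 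Thus for $\alpha\in[1,2)$ the near part is $\le N\|\varphi\|_{H^1_p}\big(\cM(|\nabla u|)(x)+|\nabla u(x)|\big)$ with $N$ independent of $\alpha$.

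Finally, raising these pointwise bounds to the $p$-th power and integrating against $w\,dx$, the weighted Hardy--Littlewood maximal inequality (valid since $w\in A_p$) reduces matters to $\|u\|_{L_{p,w}}$ and, when $\alpha\ge1$, $\|\nabla u\|_{L_{p,w}}$. We have $\|u\|_{L_{p,w}}\le N\|u\|_{H^\alpha_{p,w}}$ uniformly in $\alpha\in(0,2)$ (cf.\ Lemma~\ref{lem10092043}), and $\|\nabla u\|_{L_{p,w}}\le N\|u\|_{H^\alpha_{p,w}}$ for $\alpha\ge1$ by applying Lemma~\ref{10142048} to the multipliers $\xi\mapsto\xi_j(1+|\xi|^2)^{-\alpha/2}$, whose condition \eqref{eq1022_02} holds with a constant independent of $\alpha\in[1,2)$. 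For $\alpha\in(0,1)$ the factors $\tfrac{2-\alpha}{\alpha}$ and $\tfrac{2-\alpha}{1-\alpha}$ are bounded in terms of $\alpha_0$ and $\alpha_1$, giving $N=N(d,p,\alpha_0,\alpha_1,[w]_{A_p})$, while for $\alpha\in[1,2)$ all constants above are uniform, giving $N=N(d,p,[w]_{A_p})$.

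The main obstacle is the regime $\alpha\in[1,2)$: there the exponent $d+\alpha-1$ in the near part reaches $d$, so one must subtract the gradient and exploit $H^1_{p,w}$-regularity of $u$, and one must keep the constant bounded both as $\alpha\to1^+$ and as $\alpha\to2^-$. This works precisely because the dilation $z=sy$ makes the maximal-function bound for the remainder independent of $s$, so the divergent integral $\int_0^1 s^{\alpha-2}\,ds$ never enters, and the factor $2-\alpha$ in the near part exactly absorbs the $(2-\alpha)^{-1}$ produced by integrating $|y|^{-(d+\alpha-2)}$ near the origin.
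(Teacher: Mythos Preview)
Your proof is correct and follows essentially the same route as the paper's: Minkowski in $z$ to extract a $\|\varphi\|_{H^1_p}$ factor, the split at $|y|=1$, the maximal-function bounds from Lemma~\ref{lem09252211} for both pieces, and the FTC-plus-dilation trick for the near part when $\alpha\ge 1$, followed by the weighted maximal inequality. The only cosmetic differences are that the paper applies the fundamental theorem of calculus directly to $u(x+y)-u(x)$ (without first subtracting $(\nabla u(x),y)$, which your argument then has to add back anyway), and that the paper obtains $\|\nabla u\|_{L_{p,w}}\le N\|u\|_{H^\alpha_{p,w}}$ via the Riesz-transform bound of Lemma~\ref{05191046} combined with Lemma~\ref{lem10092043} rather than by a direct appeal to Lemma~\ref{10142048}.
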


\begin{proof}
We write
\begin{align*}
&\int_{\bR^d}|u(x+y)-u(x)||\varphi(x+y-z)-\varphi(x-z)| \frac{dy}{|y|^{d+\alpha}}\\
&=\left(\int_{B_1}+\int_{B_1^c}\right)\left(\cdots \frac{dy}{|y|^{d+\alpha}}\right).
\end{align*}
By Lemma \ref{lem09252211}, we have
\begin{align}
&\int_{B_1^c}|u(x+y)-u(x)||y|^{-(d+\alpha)}\, dy\notag\\
&\le  \int_{B_1^c}|u(x+y)| |y|^{-(d+\alpha)}\, dy+N\alpha^{-1}|u(x)|
\le N\alpha^{-1}\cM u(x).
\label{eq09252226}
\end{align}
Using the Minkowski inequality in $z$, \eqref{eq09252226}, and the weighted Hardy-Littlewood maximal function theorem, we have
\begin{align}
& \int_{\bR^d}\int_{\bR^d} \left(\int_{B_1^c}|u(x+y)-u(x)||\varphi(x+y-z)-\varphi(x-z)| \frac{dy}{|y|^{d+\alpha}}\right)^p\,dz \, w(x)\,dx\notag\\
&\le N \|\varphi\|_{L_p}^p
\int_{\bR^d} \left(\int_{B_1^c}|u(x+y)-u(x)| \frac{dy}{|y|^{d+\alpha}}\right)^pw(x)\,dx\notag\\
&\le N\alpha^{-p} \|\varphi\|_{L_p}^p
\int_{\bR^d} (\cM u(x))^p w(x)\,dx
\le N\alpha^{-p} \|\varphi\|_{L_p}^p
\|u\|_{L_{p,w}}^p, \label{eq092510}
\end{align}
where $N=N(d,p,[w]_{A_p})$.

Note that for $\alpha<1$, by Lemma \ref{lem09252211},
\begin{align}
                    \label{eq092500}
&\int_{B_1}|u(x+y)-u(x)||y|^{-(d+\alpha-1)}\, dy\notag\\
&\le \int_{B_1}|u(x+y)||y|^{-(d+\alpha-1)}\, dy+N(1-\alpha)^{-1}|u(x)|\notag\\
&\le N(1-\alpha)^{-1}\cM u(x),
\end{align}
where $N=N(d)$.
By using \eqref{eq092500} and the Minkowski inequality, for $\alpha<1$, we have
\begin{align}
&\int_{\bR^d}\int_{\bR^d} \left(\int_{B_1}|u(x+y)-u(x)||\varphi(x+y-z)-\varphi(x-z)| \frac{dy}{|y|^{d+\alpha}}\right)^p dz \, w(x) \, dx\notag\\
&\le \|D\varphi\|_{L_p}^p \int_{\bR^d}\left(\int_{B_1}|u(x+y)-u(x)| \frac{dy}{|y|^{d+\alpha-1}}\right)^p w(x)\,dx\notag\\
&\le N(1-\alpha)^{-p} \|D\varphi\|_{L_p}^p
\int_{\bR^d} (\cM u(x))^pw(x)\,dx\notag\\
&\le N(1-\alpha)^{-p} \|D\varphi\|_{L_p}^p
\|u\|_{L_{p,w}}^p,
                                    \label{eq092502}
\end{align}
where  $N=N(d,p,[w]_{A_p})$.

For $1\le \alpha<2$, we observe that by Lemma \ref{lem09252211} with $R=s$,
\begin{align}
&\int_{B_1}|u(x+y)-u(x)|\frac{dy}{|y|^{d+\alpha-1}} \le \int_0^1\int_{B_1}|Du(x+sy)|\frac{dy}{|y|^{d+\alpha-2}}\,ds\notag
\\
&\le \int_0^1 s^{\alpha-2} \int_{B_s}|Du(x+y)|\frac{dy}{|y|^{d+\alpha-2}}\,ds \le N(2-\alpha)^{-1}\cM (Du)(x),							\label{eq092503}
\end{align}
where the constant $N$ depends only on $d$.
By utilizing \eqref{eq092503} and the Minkowski inequality, we see that
\begin{align}
&\int_{\bR^d}\int_{\bR^d} \left(\int_{B_1}|u(x+y)-u(x)||\varphi(x+y-z)-\varphi(x-z)| \frac{dy}{|y|^{d+\alpha}}\right)^p dz \, w(x) \, dx\notag\\
&\le \|D\varphi\|_{L_p}^p \int_{\bR^d}\left(\int_{B_1}|u(x+y)-u(x)| \frac{dy}{|y|^{d+\alpha-1}}\right)^p w(x)\,dx\notag\\
&\le N(2-\alpha)^{-p} \|D\varphi\|_{L_p}^p
\int_{\bR^d} (\cM Du(x))^p \, w(x)\,dx\notag\\
&\le N(2-\alpha)^{-p} \|D\varphi\|_{L_p}^p
\|Du\|_{L_{p,w}}^p,
                                    \label{eq092508}
\end{align}
where $N=N(d,p,[w]_{A_p})$.
Combining \eqref{eq092510}, \eqref{eq092502}, \eqref{eq092508}, and Lemmas \ref{lem10092043} and \ref{05191046}, we obtain the inequality \eqref{eq092506}.
In particular, when $\alpha \in [1,2)$, by Lemmas \ref{lem10092043} and \ref{05191046},
\[
\|Du\|_{L_{p,w}} \leq \|u\|_{H_{p,w}^1} \leq N \|u\|_{H_{p,w}^\alpha}.
\]

Finally, due to the presence of the term $2-\alpha$ in \eqref{eq092506}, from the  above proof we see that $N$ can be chosen as described in the lemma if $0 < \alpha_0 \leq \alpha \leq \alpha_1<1$ or $1 \leq \alpha < 2$.
\end{proof}

\begin{lemma}\label{05142036}
Let $\alpha \in (0,2)$, $p\in (1,\infty)$, and $w\in A_p(\bR^d)$.
   There exists a constant $N=N(d,p,\alpha, [w]_{A_p})$ such that
   for any $u\in H_{p,w}^\alpha(\bR^d)$,
   \[
   \int_{\bR^d} \left(\int_{|y|\ge 1} \left\lvert \nabla^\alpha_y u(x)\right\rvert \frac{dy}{|y|^{d+\alpha}}\right)^p w(x) \,dx
   \le N\left(\|u\|_{L_{p,w}}^p1_{\alpha\le1}
   +\|Du\|_{L_{p,w}}^p1_{\alpha>1}\right).
   \]
Moreover, the constant $N$ can be chosen so that $N = N(d,p,\alpha_0, [w]_{A_p})$ if $0 < \alpha_0 \leq \alpha \leq 1$ or $1 < \alpha_0 \leq \alpha < 2$. (An upper bound $\alpha_1$ for $\alpha \in (0,1)$ does not appear in this case.)
\end{lemma}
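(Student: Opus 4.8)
The plan is to split the integral over $\{|y| \ge 1\}$ into two pieces according to whether $|y|$ exceeds $1$ and bound each by the Hardy–Littlewood maximal function, exactly as was done for the ``outer'' region in Lemma \ref{05140940}. The key point is that the correction term $\chi_\alpha(y)(\nabla u(x), y)$ is supported in $B_1$ when $\alpha = 1$ and is absent when $\alpha < 1$, so on $\{|y| \ge 1\}$ the integrand $\nabla^\alpha_y u(x)$ equals $u(x+y) - u(x)$ for $\alpha \le 1$ and $u(x+y) - u(x) - (\nabla u(x), y)$ for $\alpha > 1$.

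First I would treat the case $\alpha \le 1$. Here $|\nabla^\alpha_y u(x)| \le |u(x+y)| + |u(x)|$, and Lemma \ref{lem09252211} (with $R = 1$ and $\gamma = \alpha$) gives
\[
\int_{|y| \ge 1} |u(x+y)| \frac{dy}{|y|^{d+\alpha}} \le N \alpha^{-1} \cM u(x),
\]
while $\int_{|y| \ge 1} |u(x)| |y|^{-d-\alpha}\, dy = N \alpha^{-1} |u(x)| \le N\alpha^{-1}\cM u(x)$ as well. Taking $p$-th powers, integrating against $w\, dx$, and invoking the weighted Hardy–Littlewood maximal function theorem yields the bound $N \alpha^{-p} \|u\|_{L_{p,w}}^p$; since $\alpha \ge \alpha_0 > 0$ in the refined statement, the constant absorbs the $\alpha^{-p}$ factor and depends only on $d, p, \alpha_0, [w]_{A_p}$.

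Next, for $\alpha > 1$, I would write $u(x+y) - u(x) = \int_0^1 (\nabla u(x+sy), y)\, ds$, so that
\[
\nabla^\alpha_y u(x) = \int_0^1 \big(\nabla u(x+sy) - \nabla u(x), y\big)\, ds,
\]
hence $|\nabla^\alpha_y u(x)| \le |y| \int_0^1 \big(|Du(x+sy)| + |Du(x)|\big)\, ds$. Dividing by $|y|^{d+\alpha}$ and integrating over $\{|y| \ge 1\}$, the term with $|Du(x)|$ contributes $N(\alpha-1)^{-1}|Du(x)|$, and for the term with $Du(x+sy)$ I would, after the change of variables $y \mapsto y$ and splitting $\{|y|\ge 1\}$ into dyadic annuli as in the proof of Lemma \ref{lem09252211}, reduce to $\int_0^1 \big(\cM(Du)(x)$-type bounds$\big)\,ds$; a short computation (using that $\int_0^1 s^{\alpha-1-d}\, ds$ must be handled by first integrating in $y$ over an $s$-dependent ball) produces $N(\alpha-1)^{-1}\cM(Du)(x)$. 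Since here $\alpha \ge \alpha_0 > 1$, the factor $(\alpha-1)^{-1}$ is controlled by a constant depending on $\alpha_0$. Taking $p$-th powers, integrating against $w$, and applying the weighted maximal function theorem again gives the bound $N \|Du\|_{L_{p,w}}^p$ with $N = N(d,p,\alpha_0,[w]_{A_p})$.

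\emph{Main obstacle.} The only delicate point is the bookkeeping of the $\alpha$-dependence in the $\alpha > 1$ case: one must be careful that the inner $s$-integral, combined with the dyadic decomposition in $y$, does not produce a factor that blows up as $\alpha \searrow 1$ without the lower bound $\alpha_0 > 1$, and conversely that no factor blows up as $\alpha \nearrow 2$ (which is automatic here since the weight $|y|^{-d-\alpha}$ only improves as $\alpha$ grows, and no upper bound $\alpha_1$ is needed). Writing the estimate carefully as $\int_{|y|\ge 1}|y|\cdot|Du(x+sy)|\,|y|^{-d-\alpha}\,dy = s^{-1}\int_{|sy|\ge s}\cdots$ and then invoking Lemma \ref{lem09252211} with $R = \max(s,\text{something})$ keeps everything uniform; this is routine once set up correctly. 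Everything else is a direct repetition of the arguments already carried out in Lemmas \ref{lem09252211} and \ref{05140940}.
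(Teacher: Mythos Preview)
Your approach is correct and coincides with the paper's: for $\alpha\le 1$ bound $|\nabla_y^\alpha u(x)|$ by $|u(x+y)|+|u(x)|$ and apply Lemma~\ref{lem09252211}, while for $\alpha>1$ use the fundamental theorem of calculus to bound by $|y|\big(\int_0^1|Du(x+sy)|\,ds+|Du(x)|\big)$ and reduce to $\cM Du$. The obstacle you flag is milder than your description suggests (and your intermediate exponents $s^{\alpha-1-d}$, $s^{-1}$ are off): after the substitution $z=sy$ one obtains $\int_0^1 s^{\alpha-1}\int_{|z|\ge s}|Du(x+z)|\,|z|^{-d-(\alpha-1)}\,dz\,ds$, and Lemma~\ref{lem09252211} with $R=s$, $\gamma=\alpha-1$ yields $N(\alpha-1)^{-1}s^{-(\alpha-1)}\cM Du(x)$, so the $s$-powers cancel exactly and the $s$-integral is harmless.
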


\begin{proof}
 Observe that for $|y|\ge1$,
\[
|\nabla_y^\alpha u(x)|\le \left(\int_0^1 |\nabla u(x+sy)|\,ds+|\nabla u(x)|\right) |y|1_{\alpha>1}
\]
\[+\left(|u(x+y)|+|u(x)|\right)1_{\alpha\le 1},
\]
which implies that
$$
\int_{|y|\ge 1} \left\lvert \nabla^\alpha_y u(x)\right\rvert \frac{dy}{|y|^{d+\alpha}}
\le N\alpha^{-1}\cM u(x)1_{\alpha\le 1}+N(\alpha-1)^{-1}\cM Du(x)1_{\alpha> 1}
$$
by Lemma \ref{lem09252211}.
Then our assertion follows directly from the weighted Hardy-Littlewood maximal function theorem.
 \end{proof}

\begin{lemma}
							\label{05151257}
Let $0<\alpha<2$, $p\in (1,\infty)$, and $w\in A_p(\bR^d)$.
   Then, there is a constant $N=N(d,p,\alpha,[w]_{A_p})$ such that
   for any $u\in H_{p,w}^{2}(\bR^d)$,
\[
\int_{\bR^d} \left(\int_{\bR^d} \left\lvert \nabla^\alpha_y u(x)\right\rvert(2-\alpha) \frac{dy}{|y|^{d+\alpha}}\right)^p w(x) \,dx
   \le
   N \|u\|_{H_{p,w}^{2}}^p.
   \]
Moreover, the constant $N$ can be chosen so that $N = N(d,p,\alpha_0, \alpha_1,[w]_{A_p})$ if $0 < \alpha_0 \leq \alpha \leq \alpha_1 < 1$ and $N = N(d,p,\alpha_0,[w]_{A_p})$ if $1 < \alpha_0 \leq \alpha < 2$, respectively.
\end{lemma}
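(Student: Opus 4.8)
The plan is to split the $y$-integral at $|y| = 1$ and handle the two pieces separately, reducing everything to the weighted Hardy--Littlewood maximal function theorem. For the outer region $|y| \geq 1$, the estimate is essentially Lemma \ref{05142036} already: since $u \in H_{p,w}^2 \hookrightarrow H_{p,w}^\alpha$ (with a constant controlled uniformly in $\alpha$ thanks to Lemma \ref{lem10092043}), that lemma gives
\[
\int_{\bR^d}\left(\int_{|y|\geq 1}|\nabla_y^\alpha u(x)|\frac{dy}{|y|^{d+\alpha}}\right)^p w(x)\,dx \leq N\big(\|u\|_{L_{p,w}}^p 1_{\alpha\leq 1} + \|Du\|_{L_{p,w}}^p 1_{\alpha>1}\big) \leq N\|u\|_{H_{p,w}^2}^p,
\]
and multiplying by $(2-\alpha)^p \leq 2^p$ only improves matters. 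So the content is in the inner region $|y| \leq 1$.

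For $|y| \leq 1$, I would bound $|\nabla_y^\alpha u(x)|$ using a second-order Taylor expansion. For all $\alpha \in (0,2)$ one has the pointwise bound
\[
|\nabla_y^\alpha u(x)| \leq N\,|y|^2 \int_0^1 |D^2 u(x+sy)|\,ds + (\text{a correction when }\alpha<1),
\]
where the correction accounts for the fact that $\nabla_y^\alpha u$ does not subtract the gradient term when $\alpha < 1$; in that regime I instead use the first-order bound $|\nabla_y^\alpha u(x)| = |u(x+y)-u(x)| \leq |y|\int_0^1 |Du(x+sy)|\,ds$. Then, exactly as in the computation \eqref{eq092503} in the proof of Lemma \ref{05140940}, Lemma \ref{lem09252211} with $R=s$ and a dyadic/radial integration gives, for $\alpha > 1$,
\[
\int_{B_1}|\nabla_y^\alpha u(x)|\frac{dy}{|y|^{d+\alpha}} \leq N\int_0^1\int_{B_1}|D^2u(x+sy)|\frac{dy}{|y|^{d+\alpha-2}}\,ds \leq N(2-\alpha)^{-1}\,\cM(D^2 u)(x),
\]
while for $\alpha \leq 1$ one similarly gets $\int_{B_1}|\nabla_y^\alpha u(x)|\,|y|^{-d-\alpha}\,dy \leq N(1-\alpha)^{-1}\cM(Du)(x)$ when $\alpha$ stays away from $1$, and the second-order bound yielding $N(2-\alpha)^{-1}\cM(D^2u)(x)$ when $\alpha$ is near $1$. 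The factor $(2-\alpha)$ in the statement is precisely what absorbs the $(2-\alpha)^{-1}$ blow-up coming from the exponent $d+\alpha-2 \to d$ as $\alpha \nearrow 2$; near $\alpha = 1$ there is no blow-up because the relevant exponent is bounded away from $d$. Multiplying by $(2-\alpha)$, raising to the $p$-th power, integrating against $w$, and applying the weighted Hardy--Littlewood maximal theorem converts the right-hand side into $N(\|Du\|_{L_{p,w}}^p + \|D^2u\|_{L_{p,w}}^p) \leq N\|u\|_{H_{p,w}^2}^p$, using $\|Du\|_{L_{p,w}} + \|D^2u\|_{L_{p,w}} \leq N\|u\|_{H_{p,w}^2}$ (a consequence of Lemma \ref{10142048} applied to the multipliers $\xi_j(1+|\xi|^2)^{-1}$ and $\xi_i\xi_j(1+|\xi|^2)^{-1}$).

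Combining the two regions gives the claimed inequality. For the refined dependence of $N$ on $\alpha$: in the outer region the constant inherits the $\alpha_0$-dependence from Lemma \ref{05142036}; in the inner region, the only potential singularities are at $\alpha = 1$ (handled by switching to the $D^2u$ bound, which is uniform near $1$) and at $\alpha = 2$ (absorbed by the prefactor $2-\alpha$). Thus for $0<\alpha_0\leq\alpha\leq\alpha_1<1$ the constant depends on $\alpha_0,\alpha_1$ (one needs $\alpha$ bounded away from $1$ only if one uses the first-order bound; but since the second-order bound works for all $\alpha\in(0,2)$, careful bookkeeping shows $\alpha_1$ is genuinely needed only to keep the $\cM(Du)$ route available, matching the statement), and for $1<\alpha_0\leq\alpha<2$ it depends only on $\alpha_0$. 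The main obstacle — though it is more bookkeeping than difficulty — is tracking the $(2-\alpha)$ and $(\alpha-1)$ factors carefully enough to see that, after multiplication by $(2-\alpha)$, all constants remain finite and have exactly the stated parameter dependence; the analytic input (Taylor expansion plus Lemma \ref{lem09252211} plus the maximal function theorem) is routine.
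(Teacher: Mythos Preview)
Your proposal is correct and follows essentially the same route as the paper: split the $y$-integral at $|y|=1$, invoke Lemma \ref{05142036} for the outer piece, and on $B_1$ use a Taylor expansion (first-order for $\alpha<1$, second-order for $\alpha\ge 1$) together with Lemma \ref{lem09252211} and the weighted maximal theorem to land on $\cM(Du)$ or $\cM(D^2u)$. One small remark on your bookkeeping: for $\alpha<1$ the second-order expansion does \emph{not} bypass the $(1-\alpha)^{-1}$ factor, since $\nabla_y^\alpha u$ has no gradient subtraction and the leftover linear term $|(\nabla u(x),y)|$ still produces $\int_{B_1}|y|^{1-d-\alpha}\,dy\sim(1-\alpha)^{-1}$; this is harmless because the statement allows dependence on $\alpha_1$ in that regime, exactly as the paper records.
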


\begin{proof}
For $\alpha\ge 1$, by using a similar argument as in \eqref{eq092503} {and Lemma \ref{lem09252211}},
we see that there is a constant $N=N(d)$ such that
\begin{align}
    &\int_{B_1} \left\lvert u(x+y)-u(x)-\left(\nabla u(x),y\right)\right\rvert \frac{dy}{|y|^{d+\alpha}}\notag\\
    &\le
    \int_0^1\int_0^1\int_{B_1} s \left\lvert D^2u(x+sty) \right\rvert \frac{dy}{|y|^{d-(2-\alpha)}}\,dt\,ds\notag\\
    &\le N(2-\alpha)^{-1} \cM (D^2u)(x).
    \label{eq092520}
\end{align}
By Lemma \ref{05142036}, \eqref{eq092520}, and the weighted Hardy-Littlewood maximal function theorem, we reach our assertion.
For $\alpha<1$, we proceed similarly upon obtaining
\[
\int_{B_1}|u(x+y) - u(x)| \frac{dy}{|y|^{d+\alpha}} \leq N (1-\alpha)^{-1} \cM (Du)(x).
\]
\end{proof}


\section{Boundedness of non-local operator with \texorpdfstring{$a = a(y)$}{} in \texorpdfstring{$A_p$}{} weighted spaces}
\label{boundedness_sec}

In this section we again consider the time independent case.
If the coefficient $a$ of the non-local operator $\cL$ in \eqref{05131328} is a function of $y$ only, we know that $\cL$ is a bounded operator from $H_p^\alpha$ to $L_p$.
We extend this result so that $\cL$ is a bounded operator from $H^\alpha_p$ to $L_p$ with $A_p$ weights, which is an essential ingredient to the proof of the main theorems.

\begin{proposition}\label{05132149}
	Let $p\in (0,\infty)$ and $\alpha\in(0,2).$
	Suppose that the coefficient $a$ is independent of the $x$ variables, and there exists a constant $K_1$ such that $|a(y)| \leq (2-\alpha)K_1$ for any $y \in \bR^d$.
	When $\alpha=1$, we further assume that, for any $0<r<R$,
\begin{equation}\label{09131350}
\int_{r\le |y|\le R}ya(y)\frac{dy}{|y|^{d+\alpha}}=0.
\end{equation}
	Then, for each $w\in A_p(\bR^d)$,  there is a constant $N=N(d,p, \alpha, [w]_{A_p})$ such that for any $u\in\mathcal{S}(\bR^d)$,
	\begin{equation}
	\label{eq0522_01}
	\|\cL^a u\|_{L_{p,w}}\le N K_1 \|\partial^\alpha u\|_{L_{p,w}}.
	\end{equation}
	Moreover, the constant $N$ can be chosen so that $N = N(d,p,\alpha_0, \alpha_1, [w]_{A_p})$ if $0 < \alpha_0 \leq \alpha \leq \alpha_1 < 1$ and $N = N(d,p,\alpha_0, [w]_{A_p})$ if $1 < \alpha_0 \leq \alpha < 2$, respectively.
Furthermore, if the condition \eqref{09131350} is satisfied for $\alpha \in (0,2)$, the constant $N$ depends only on $d$, $p$, $[w]_{A_p}$, and $\alpha_0$, where $0 < \alpha_0 \leq \alpha < 2$.
\end{proposition}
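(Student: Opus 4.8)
The plan is to exhibit $\cL^a$ (applied to $u\in\cS(\bR^d)$) as a Calder\'on--Zygmund convolution operator acting on $\partial^\alpha u$, and then to invoke the classical weighted $L_p$-theory of singular integrals together with Lemma \ref{05191046}. First I would treat $\alpha\in(0,1)$: here $\nabla^\alpha_y u(x)=u(x+y)-u(x)$, and Lemma \ref{0501_1} (with a Fubini step justified by the integrability of $k^\alpha(\cdot,y)$) gives $\cL^a u=\mathcal{K}^a*\partial^\alpha u$ with
\[
\mathcal{K}^a(z)=N_0\int_{\bR^d}k^\alpha(z,y)\,a(y)\,\frac{dy}{|y|^{d+\alpha}}.
\]
For $\alpha\in(1,2)$ I would instead write $\nabla^\alpha_y u(x)=\int_0^1\bigl(\nabla u(x+sy)-\nabla u(x),y\bigr)\,ds$, apply Lemma \ref{0501_1} to each $D_iu$ with exponent $\alpha-1\in(0,1)$, and note that $\partial^{\alpha-1}D_iu$ equals, up to sign, $\partial^{-1}D_i$ applied to $\partial^\alpha u$; this yields
\[
\cL^a u=\sum_{i=1}^d\mathcal{K}^a_i*\bigl(\partial^{-1}D_i\partial^\alpha u\bigr),\qquad
\mathcal{K}^a_i(z)=N_0(\alpha-1,d)\int_0^1\!\!\int_{\bR^d}y_i\,k^{\alpha-1}(z,sy)\,a(y)\,\frac{dy}{|y|^{d+\alpha}}\,ds.
\]
Finally, for $\alpha=1$ the cancellation \eqref{09131350} makes the linear correction integrate to zero, so $\cL^a u(x)=\mathrm{p.v.}\int_{\bR^d}(u(x+y)-u(x))a(y)|y|^{-d-1}\,dy$; writing $u(x+y)-u(x)$ as a convolution of $\partial^1u$ against $|z+y|^{1-d}-|z|^{1-d}$ (for $d\geq2$; a Hilbert-transform variant for $d=1$) gives $\cL^a u=\mathcal{K}^a*\partial^1u$ with $\mathcal{K}^a$ a principal-value integral, again well defined by \eqref{09131350}.

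\textbf{The kernels are standard.} Using only $|a|\le(2-\alpha)K_1$ and rescaling $y$, each kernel above is homogeneous of degree $-d$, so it suffices to bound it on $\{1/2\le|z|\le2\}$. There I would split the $y$-integral into $|y|\lesssim|z|$ (convergence near $y=0$ from a first/second-order Taylor bound for $k^\alpha(z,\cdot)$, or from \eqref{09131350} when $\alpha=1$), $|y|\sim|z|$ (the singularity $|z+y|^{\alpha-d}$, resp.\ $|z+sy|^{\alpha-1-d}$, is integrable because the exponent is $>-d$), and $|y|\gg|z|$ (where $k^\alpha$ decays). This gives $|\mathcal{K}^a(z)|\le NK_1|z|^{-d}$; estimating $k^\alpha(z,y)-k^\alpha(z',y)$ in the same regions, with extra care near the off-diagonal singularity, gives the H\"older bound $|\mathcal{K}^a(z)-\mathcal{K}^a(z')|\le NK_1|z-z'|^\sigma|z|^{-d-\sigma}$ for $|z|\ge2|z-z'|$ and some $\sigma\in(0,1)$. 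For $L_2$-boundedness I would check that $\widehat{\mathcal{K}^a}$ is a constant multiple of $m_a(\xi)/|\xi|^\alpha$, where $m_a(\xi)=\int_{\bR^d}(e^{i(\xi,y)}-1-\chi_\alpha(y)i(\xi,y))a(y)|y|^{-d-\alpha}\,dy$ is the symbol of $\cL^a$, and that $|m_a(\xi)|\le N(2-\alpha)K_1|\xi|^\alpha$, which follows by splitting at $|y|=1/|\xi|$ and using $|e^{i(\xi,y)}-1-\chi_\alpha(y)i(\xi,y)|\le N\min(|\xi|^2|y|^2,\,1+|\xi||y|)$ (with \eqref{09131350} absorbing the linear term when $\alpha=1$); for $\alpha\in(1,2)$ the analogous computation for $\widehat{\mathcal{K}^a_i}$ is the same.

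\textbf{Conclusion and the uniform constants.} An $L_2$-bounded convolution operator whose kernel satisfies the size and H\"older bounds above is bounded on $L_{p,w}(\bR^d)$ for every $1<p<\infty$, $w\in A_p(\bR^d)$, with norm controlled by $d$, $p$, $[w]_{A_p}$ and the kernel constants. For $\alpha\le1$ this is exactly \eqref{eq0522_01} (for $\alpha=1$ using $\cL^a u=\mathcal{K}^a*\partial^1u$ directly); for $\alpha\in(1,2)$ it remains to apply Lemma \ref{05191046} to $\partial^\alpha u$ in place of $u$, which bounds $\sum_i\|\partial^{-1}D_i\partial^\alpha u\|_{L_{p,w}}\le N\|\partial^\alpha u\|_{L_{p,w}}$. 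For the claims about the dependence of $N$ on $\alpha$ I would keep track of the explicit factors $N_0(\alpha,d)$, $N_0(\alpha-1,d)$, of the $2-\alpha$ from $|a|$, and of the $\alpha^{-1}$, $(1-\alpha)^{-1}$, $(2-\alpha)^{-1}$ produced by the $y$-integrals, and verify that they compensate away from the excluded endpoints; when \eqref{09131350} holds for all $\alpha\in(0,2)$ one may drop the linear correction uniformly, which is what permits the single range $0<\alpha_0\le\alpha<2$.

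\textbf{Main difficulty.} The hard part will be the case $\alpha=1$ together with the linear correction when $\alpha\ge1$: one must use \eqref{09131350} to interpret the relevant kernel as a principal value, show it is nevertheless a standard kernel, and combine this with the Riesz-transform bound of Lemma \ref{05191046}. Closely related is the H\"older estimate of $\mathcal{K}^a$ near the off-diagonal singularity $y\approx-z$ (resp.\ $sy\approx-z$), and the careful bookkeeping of the $\alpha$-dependent constants needed for the endpoint-uniform assertions.
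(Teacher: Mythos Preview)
Your overall architecture coincides with the paper's: for $\alpha\in(0,1)$ you represent $\cL^a u=\mathcal{K}^a*\partial^\alpha u$ via Lemma~\ref{0501_1} and verify the Calder\'on--Zygmund kernel bounds (the paper does this with the truncations $a_\varepsilon$ and carries out your ``extra care near the off-diagonal singularity'' in detail as Lemma~\ref{0501_5}); for $\alpha\in(1,2)$ you use the first-order Taylor expansion, reduce to order $\alpha-1$ acting on $D_iu$, and finish with Lemma~\ref{05191046}, which is exactly the paper's reduction $\cL^a u=\sum_i\cL^{a_i}(D_iu)$ after the change of variables $y\mapsto y/s$. One imprecision: the kernels $\mathcal{K}^a$, $\mathcal{K}^a_i$ are \emph{not} homogeneous of degree $-d$ unless $a$ is homogeneous of degree $0$; rescaling replaces $a(\cdot)$ by $a(\lambda\,\cdot)$, so what is homogeneous is the \emph{bound} (since only $|a|\le(2-\alpha)K_1$ is used). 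This is harmless for the size/H\"older estimates but should be phrased accordingly.

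The substantive divergence is at $\alpha=1$ and, relatedly, in the bookkeeping of constants near $\alpha=1$. Your plan to write $u(x+y)-u(x)$ as a convolution of $\partial^1u$ against $k^1(z,y)=|z+y|^{1-d}-|z|^{1-d}$ runs into the fact that Lemma~\ref{0501_1} is only valid for $\alpha\in(0,1)$: indeed $k^1(\cdot,y)\notin L_1(\bR^d)$ (its decay at infinity is only $|z|^{-d}$), so the Fubini step you need to pass from the pointwise identity to $\cL^a u=\mathcal{K}^a*\partial^1 u$ is not justified, and for $d=1$ the kernel degenerates to a constant. The paper circumvents this by a bootstrap through $\partial^{1/2}$: one writes $u(x+y)-u(x)=N_0\int k^{1/2}(z,y)\,\partial^{1/2}u(x-z)\,dz$ (valid since $1/2\in(0,1)$), uses $\int k^{1/2}(z,y)\,dz=0$ to recognize the resulting operator on $\partial^{1/2}u$ as a non-local operator of order $\alpha-1/2\in(1/4,3/4)$ with a bounded new coefficient $m_\varepsilon(z)$, and then applies the already-proved case $\alpha\in(0,1)$. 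The same device is what makes the endpoint-uniform claims work: it isolates the blow-up as an explicit factor $(1-\alpha)^{-1}$ in $|m_\varepsilon|$, which then cancels against the $(2-\alpha)$ from $|a|$ after the reduction $\alpha\mapsto\alpha-1$ in the range $(1,2)$, and (with \eqref{09131350}) disappears entirely on $(3/4,5/4)$. Your plan to ``keep track of the explicit factors and verify that they compensate'' is the right instinct, but without the $\partial^{1/2}$ bootstrap it is not clear how you would produce the precise $(1-\alpha)^{-1}$ needed for the cancellation; this is the one missing idea in your proposal.
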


\begin{remark}
The condition \eqref{09131350} is satisfied, for instance, if $a(y)$ is symmetric, i.e., $a(y) = a(-y)$.
\end{remark}

Before proving Proposition \ref{05132149}, we introduce a well known result  from harmonic analysis.
We  refer the reader to \cite{MR303226, MR358205, MR2182632}.

\begin{lemma}\label{05191147}
	Let $T$ be a singular integral operator of the type
	\[
	T f(x)=\int_{\bR^d} K(x-z)f(z)\,dz.
	\]
	Assume that there are positive constants $\gamma$ and $C_1$ such that
	\[
	|K(x+z)-K(x)|\le C_1\frac{|z|^\gamma}{|x|^{d+\gamma}}, \quad |x|\ge 4 |z|.
	\]
	Also assume that there is a constant $C_2$ such that
\[
\|Tf\|_{L_2}\le C_2 \|f\|_{L_2}
\]
for any $f\in L_2(\bR^d)$.
	Then, for any $w\in A_p(\bR^d)$ and $f\in L_{p,w}(\bR^d)$, there is a constant $N=N(d,p,\gamma,C_1,C_2,[w]_{A_p})$ such that
\[
\|T f\|_{L_{p,w}}\le N \|f\|_{L_{p,w}}.
\]
\end{lemma}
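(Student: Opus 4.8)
The plan is to follow the classical Coifman--Fefferman argument for weighted norm inequalities for Calder\'on--Zygmund operators: pass from the $L_2$ bound and the kernel regularity to weak type $(1,1)$, prove a good-$\lambda$ inequality comparing $Tf$ with the Hardy--Littlewood maximal function $\cM f$, and close using that $w\in A_p$ is an $A_\infty$ weight together with the weighted maximal theorem $\|\cM f\|_{L_{p,w}}\le N\|f\|_{L_{p,w}}$.

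First I would upgrade the pointwise kernel estimate to the H\"ormander integral condition: integrating $|K(x-z)-K(x)|\le C_1|z|^\gamma|x|^{-d-\gamma}$ over $\{|x|\ge 4|z|\}$ gives $\int_{|x|\ge 4|z|}|K(x-z)-K(x)|\,dx\le N(d,\gamma)C_1$. Combining this with the $L_2$ bound via the Calder\'on--Zygmund decomposition of $f$ at height $\lambda$ (write $f=g+b$ with $\|g\|_{L_\infty}\le N\lambda$, $b=\sum_j b_j$, each $b_j$ supported in a cube $Q_j$ with zero mean, and $\sum_j|Q_j|\le N\lambda^{-1}\|f\|_{L_1}$), one estimates $\{|Tf|>\lambda\}$ by Chebyshev and the $L_2$ bound for the $g$-part and by the H\"ormander condition on the complement of $\bigcup_j 2Q_j$ for the $b$-part; this shows $T$ is of weak type $(1,1)$ with constant controlled by $d,\gamma,C_1,C_2$. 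Marcinkiewicz interpolation with the $L_2$ bound together with a duality argument --- the transpose of $T$ has kernel $K(z-x)$, which satisfies the same hypotheses --- then shows $T$ is bounded on $L_{p_0}(\bR^d)$ for every $p_0\in(1,\infty)$; this unweighted boundedness is needed to legitimize the absorption step below.

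Next comes the good-$\lambda$ inequality: there are $\varepsilon=\varepsilon(d,[w]_{A_p})>0$ and $N$ so that, for all $\lambda>0$ and all sufficiently small $\eta>0$,
\[
w\bigl(\{|Tf|>2\lambda\}\cap\{\cM f\le\eta\lambda\}\bigr)\le N\eta^{\varepsilon}\,w\bigl(\{|Tf|>\lambda\}\bigr).
\]
Working with $f$ in a dense subclass (so that $\{|Tf|>\lambda\}$ may be taken open of finite measure, or else replacing $Tf$ by $\cM(Tf)$), I would take a Whitney decomposition $\{Q_j\}$ of this superlevel set; on a cube $Q_j$ such that a fixed dilate $cQ_j$ meets $\{|Tf|\le\lambda\}$, splitting $f$ into its restrictions to $cQ_j$ and to $\bR^d\setminus cQ_j$ and applying the weak type $(1,1)$ bound to the first piece and the H\"ormander condition to the second yields the Lebesgue estimate $|\{x\in Q_j:|Tf(x)|>2\lambda,\ \cM f(x)\le\eta\lambda\}|\le N\eta|Q_j|$. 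The $A_\infty$ property of $w$ (an $A_p$ weight satisfies $w(E)\le N(|E|/|Q|)^{\varepsilon}w(Q)$ for measurable $E\subseteq Q$, with $N,\varepsilon$ controlled by $[w]_{A_p}$) upgrades this to $w(\{\cdots\}\cap Q_j)\le N\eta^{\varepsilon}w(Q_j)$, and summing over $j$ proves the inequality. Multiplying it by $p\lambda^{p-1}$ and integrating in $\lambda$ gives $2^{-p}\|Tf\|_{L_{p,w}}^p\le N\eta^{\varepsilon}\|Tf\|_{L_{p,w}}^p+\eta^{-p}\|\cM f\|_{L_{p,w}}^p$; choosing $\eta$ with $N\eta^{\varepsilon}<2^{-p-1}$ and absorbing --- which is legitimate once $\|Tf\|_{L_{p,w}}<\infty$, known first for $f$ in a dense subclass from the unweighted $L_{p_0}$ bounds plus the reverse H\"older inequality for $w$, and then extended to all $f\in L_{p,w}$ by density --- yields $\|Tf\|_{L_{p,w}}\le N\|\cM f\|_{L_{p,w}}\le N\|f\|_{L_{p,w}}$ with $N=N(d,p,\gamma,C_1,C_2,[w]_{A_p})$.

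I expect the main obstacle to be the good-$\lambda$ step, and within it the local Lebesgue bound $|\{x\in Q_j:|Tf|>2\lambda,\ \cM f\le\eta\lambda\}|\le N\eta|Q_j|$: this is where the weak type $(1,1)$ bound and the kernel regularity must be combined carefully in the geometry of the Whitney cubes, and it is also where one must be attentive to the technical point of a priori finiteness of $\|Tf\|_{L_{p,w}}$ that makes the absorption argument rigorous. Everything else --- deriving the H\"ormander condition, the weak type $(1,1)$ bound, the $A_\infty$/reverse-H\"older estimate for $w$, and the distribution-function integration --- is routine.
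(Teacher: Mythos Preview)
The paper does not prove this lemma at all; it simply records it as a well-known result from harmonic analysis and cites Coifman, Coifman--Fefferman, and Lorente--Riveros--de la Torre. Your outline is precisely the classical Coifman--Fefferman good-$\lambda$ argument contained in those references (H\"ormander condition $\Rightarrow$ weak $(1,1)$ $\Rightarrow$ good-$\lambda$ inequality via Whitney decomposition and the $A_\infty$ property of $A_p$ weights $\Rightarrow$ comparison with $\cM f$ and the weighted maximal theorem), so your proposal is correct and is exactly the approach the paper is invoking by citation.
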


Now, we prove Proposition  {\ref{05132149}}.

\subsection*{Proof of Proposition \ref{05132149} }
To show Proposition \ref{05132149}, we exploit an idea  in \cite{MR3145767}.
We first consider the case when $\alpha\in (0,1)$.
For $\varepsilon\in(0,1)$, set $a_\varepsilon(y)=a(y)1_{\varepsilon\le |y|\le \varepsilon^{-1}}$.
Using Fubini's Theorem and Lemma \ref{0501_1},
\begin{align*}
\cL u&=\int_{\bR^d}\big(u(x+y)-u(x)\big)a(y)\frac{dy}{|y|^{d+\alpha}}\\
&=\lim_{\varepsilon\to 0}N_0\int_{\bR^d}\left(\int_{\bR^d}k^\alpha(z,y)\partial^\alpha u(x-z)\,dz\right) a_\varepsilon(y)\frac{dy}{|y|^{d+\alpha}}\\
&=\lim_{\varepsilon\to 0}N_0\int_{\bR^d}\left(\int_{\bR^d}k^\alpha(z,y)a_\varepsilon(y)\frac{dy}{|y|^{d+\alpha}}\right)\partial^\alpha u(x-z)\,dz.
\end{align*}
For $\varepsilon\in(0,1)$, set
\[
T^\varepsilon f(x)=\int_{\bR^d}k_\varepsilon(x-z) f(z)\,dz,
\]
where
\[
k_\varepsilon(x)=\int_{\bR^d}k^\alpha(x,y)a_\varepsilon(y)\frac{dy}{|y|^{d+\alpha}},
\]
so that in the pointwise sense,
\[
\cL u (x) = \lim_{\varepsilon \to 0} T^\varepsilon \partial^\alpha u (x).
\]

From now on, we check that  $T^\varepsilon$ satisfies the conditions in Lemma \ref{05191147}.
\begin{lemma}\label{05191149}
	Let $0<\alpha_0\le\alpha\le\alpha_1<1$. Assume that $|a(y)|\le 1$ for all $y\in \bR^d$. Then,
	there is a constant $N=N(d,\alpha_0,\alpha_1)$ such that for any $f \in L_2(\bR^d)$,
	\begin{equation*} 
	\|T^\varepsilon f\|_{L_2}\le N\|f\|_{L_2}.
	\end{equation*}
\end{lemma}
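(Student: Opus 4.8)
The plan is to show that $T^\varepsilon$ is an $L_2$-bounded operator with bound independent of $\varepsilon$ by estimating its Fourier multiplier. Write $T^\varepsilon f = k_\varepsilon * f$, so that the multiplier is $\widehat{k_\varepsilon}(\xi)$. Using the definition $k_\varepsilon(x) = \int_{\bR^d} k^\alpha(x,y) a_\varepsilon(y) \frac{dy}{|y|^{d+\alpha}}$ with $k^\alpha(z,y) = |z+y|^{-d+\alpha} - |z|^{-d+\alpha}$, and interchanging the order of integration (justified since $a_\varepsilon$ is supported on the annulus $\varepsilon \le |y| \le \varepsilon^{-1}$ and $k^\alpha(\cdot,y)$ is integrable by Lemma \ref{0501_1}), I would compute
\[
\widehat{k_\varepsilon}(\xi) = \int_{\bR^d} \left( \mathcal{F}[|\cdot+y|^{-d+\alpha}](\xi) - \mathcal{F}[|\cdot|^{-d+\alpha}](\xi) \right) a_\varepsilon(y) \frac{dy}{|y|^{d+\alpha}}.
\]
By \eqref{eq09251339}, $\mathcal{F}[|\cdot|^{-d+\alpha}](\xi) = N_0^{-1} |\xi|^{-\alpha}$ (up to the precise normalization constant), and the translation $|\cdot + y|^{-d+\alpha}$ contributes a factor $e^{i(\xi,y)}$. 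Hence
\[
\widehat{k_\varepsilon}(\xi) = N_0^{-1} |\xi|^{-\alpha} \int_{\bR^d} \left( e^{i(\xi,y)} - 1 \right) a_\varepsilon(y) \frac{dy}{|y|^{d+\alpha}}.
\]

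The key point is then that $|\xi|^{-\alpha} \int (e^{i(\xi,y)} - 1) a_\varepsilon(y) |y|^{-d-\alpha}\,dy$ is bounded uniformly in $\xi$ and $\varepsilon$. I would split the $y$-integral at $|y| = |\xi|^{-1}$. On the region $|y| \ge |\xi|^{-1}$, bound $|e^{i(\xi,y)}-1| \le 2$ and use $|a_\varepsilon| \le 1$ to get $\int_{|y|\ge |\xi|^{-1}} 2 |y|^{-d-\alpha}\,dy = N \alpha^{-1} |\xi|^{\alpha}$, which cancels the $|\xi|^{-\alpha}$ prefactor and leaves $N\alpha^{-1} \le N \alpha_0^{-1}$. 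On the region $\varepsilon \le |y| \le |\xi|^{-1}$, use $|e^{i(\xi,y)} - 1| \le |\xi||y|$, giving $|\xi| \int_{|y| \le |\xi|^{-1}} |y|^{-d-\alpha+1}\,dy = N(1-\alpha)^{-1} |\xi| \cdot |\xi|^{\alpha-1} = N(1-\alpha)^{-1}|\xi|^{\alpha}$, which again cancels the prefactor and leaves $N(1-\alpha)^{-1} \le N(1-\alpha_1)^{-1}$. Combining the two pieces yields $\|\widehat{k_\varepsilon}\|_{L_\infty} \le N(d,\alpha_0,\alpha_1)$, and Plancherel's theorem gives $\|T^\varepsilon f\|_{L_2} \le N \|f\|_{L_2}$ with the stated constant.

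The main obstacle — really the only delicate point — is rigorously justifying the interchange of integrations in the computation of $\widehat{k_\varepsilon}$ and the evaluation $\mathcal{F}[|\cdot|^{-d+\alpha}] = c|\xi|^{-\alpha}$, since $|\cdot|^{-d+\alpha}$ is only a tempered distribution, not an $L^1$ function; one should work with the Fourier transform in the sense of distributions, or test against Schwartz functions and use that $a_\varepsilon$ has compact support bounded away from the origin so that $k_\varepsilon$ is a genuine $L^1$ (indeed bounded, integrable) function. Once that bookkeeping is done, the pointwise multiplier bound above is elementary. Note that the case $\alpha = 1$ is excluded here (the hypothesis is $\alpha \in (0,1)$), so the cancellation condition \eqref{09131350} plays no role in this lemma; it will be needed later when reassembling $\cL^a u = \lim_\varepsilon T^\varepsilon \partial^\alpha u$ and passing to $\alpha = 1$.
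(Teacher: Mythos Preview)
Your proof is correct and follows essentially the same approach as the paper: both compute the Fourier multiplier of $T^\varepsilon$ as $N_0^{-1}|\xi|^{-\alpha}\int (e^{i(\xi,y)}-1)a_\varepsilon(y)|y|^{-d-\alpha}\,dy$ and bound it uniformly by splitting the integral. The only cosmetic difference is that the paper first rescales $y\mapsto y/|\xi|$ to reduce to $|\xi|=1$ and then splits at $|y|=1$, which is exactly your split at $|y|=|\xi|^{-1}$ before rescaling.
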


\begin{proof}

Since
\[
\cF [T^\varepsilon f] = \cF[k_\varepsilon] \cF[f],
\]
we only show that $\cF[k_\varepsilon]$ is bounded.
Bearing \eqref{eq09251339} in mind,
 we see that, by denoting $\hat{\xi}=\xi/|\xi|$,
\begin{align*}
&|\cF[k_\varepsilon](\xi)|
=\left|\int_{\bR^d} \cF[k^\alpha(\cdot,y)](\xi)a_\varepsilon(y)\frac{dy}{|y|^{d+\alpha}}\right|\\
&\le N|\xi|^{-\alpha}\int_{\bR^d} |e^{i(\xi,y) }-1|\frac{dy}{|y|^{d+\alpha}}
=
N\int_{\bR^d} |e^{i(\hat{\xi},y) }-1|\frac{dy}{|y|^{d+\alpha}}\\
&\le N\int_{B_1} \frac{dy}{|y|^{d+\alpha-1}}
+
N\int_{B_1^c} \frac{dy}{|y|^{d+\alpha}}
\le N(d,\alpha_0,\alpha_1).
\end{align*}
The lemma is proved.
\end{proof}

\begin{lemma}\label{0501_5}
	Let $0<\alpha_0\le\alpha\le\alpha_1<1$.
Suppose that for all $y\in\bR^d$, $|a(y)|\le 1$.
Then, for all $|x|>4|z|$,
	\begin{equation}\label{05141741}
	|k_\varepsilon(x+z)-k_\varepsilon(x)|\le N \frac{|z|^\gamma}{|x|^{d+\gamma}},
	\end{equation}
where $N=N(d,\alpha_0,\alpha_1)$ and $\gamma=\gamma(\alpha_0,\alpha_1)$.
\end{lemma}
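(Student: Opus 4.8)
The plan is to estimate the difference $k_\varepsilon(x+z)-k_\varepsilon(x)$ by passing the difference inside the $y$-integral, so that
\[
k_\varepsilon(x+z)-k_\varepsilon(x) = \int_{\bR^d}\big(k^\alpha(x+z,y)-k^\alpha(x,y)\big)a_\varepsilon(y)\frac{dy}{|y|^{d+\alpha}},
\]
and then, recalling $k^\alpha(x,y)=|x+y|^{-d+\alpha}-|x|^{-d+\alpha}$, to write the integrand as a \emph{second} difference in the first slot:
\[
k^\alpha(x+z,y)-k^\alpha(x,y) = \big(|x+z+y|^{-d+\alpha}-|x+y|^{-d+\alpha}\big) - \big(|x+z|^{-d+\alpha}-|x|^{-d+\alpha}\big).
\]
Since $|a_\varepsilon|\le 1$, it suffices to bound $\int_{\bR^d}\big|k^\alpha(x+z,y)-k^\alpha(x,y)\big|\,|y|^{-d-\alpha}\,dy$ by $N|z|^\gamma|x|^{-d-\gamma}$ uniformly in $\varepsilon$, which removes $a$ and $\varepsilon$ from the problem entirely.

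Next I would split the $y$-integral according to the size of $|y|$ relative to $|x|$ (note $|x|>4|z|$, so $|x|$ is the relevant scale), say into $|y|\le |x|/2$, $|x|/2<|y|\le 2|x|$, and $|y|>2|x|$. On the region $|y|>2|x|$ (where $|x+y|,|x+z+y|\gtrsim|y|$), I would use that $|x|^{-d+\alpha}$ contributions, i.e. the term $|x+z|^{-d+\alpha}-|x|^{-d+\alpha}$, are handled directly by the mean value theorem applied to $t\mapsto t^{-d+\alpha}$ (giving a bound $N|z||x|^{-d-1+\alpha}$, since $|x|>4|z|$ keeps us away from the singularity), and the remaining $y$-dependent part is estimated by a double mean value theorem in $z$ and then in $y$, producing a kernel bound like $|z||y|^{-d-1+\alpha}$ integrated against $|y|^{-\alpha}$ over $|y|>2|x|$, giving $N|z||x|^{-d-1}$. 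On the region $|y|\le|x|/2$ one can treat $k^\alpha(\cdot,y)$ as a smooth function of the first variable away from $0$ and use the mean value theorem once in $z$; the gradient of $|x+y|^{-d+\alpha}$ in $x$ is comparable to $|x|^{-d-1+\alpha}$ there. The genuinely delicate region is $|x|/2<|y|\le 2|x|$, where $x+y$ can be small, i.e. the singularities of $|x+y|^{-d+\alpha}$ and $|x+z+y|^{-d+\alpha}$ are active; here the pointwise mean value estimate fails, but the integrability of $k^\alpha$ in $y$ (Lemma \ref{0501_1}) together with a change of variables and the local integrability of $|\cdot|^{-d+\alpha}$ (for $\alpha>0$) and of $|\cdot|^{-d+\alpha-1}$ near the origin (available only when $\alpha>1$, hence one must instead integrate the difference rather than each term) saves the day, yielding a bound of the form $N|z|^\alpha|x|^{-d}$ or $N|z|^{\alpha_0}|x|^{-d}$ after rescaling.

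Assembling the three regional estimates, each is of the form $N|z|^{\gamma_i}|x|^{-d-\gamma_i}$ for some $\gamma_i\in\{1,\alpha\}$ (or $\alpha_0$), and since $|z|\le |x|/4$ we can replace all the exponents by the smallest one, $\gamma=\alpha_0$, losing only a constant depending on $\alpha_0,\alpha_1$. This gives \eqref{05141741} with $\gamma=\gamma(\alpha_0,\alpha_1)$ and $N=N(d,\alpha_0,\alpha_1)$, as claimed. The main obstacle, as indicated, is the intermediate annulus $|x|/2<|y|<2|x|$: one cannot differentiate under the integral because $|x+y|^{-d+\alpha}$ is singular there, so the argument must keep the difference structure, use a scaling $y\mapsto |x|y$ to reduce to a fixed-scale estimate, and carefully exploit that $-d+\alpha>-d$ to get local integrability — the small power of $|z|$ that emerges is exactly what forces $\gamma$ to depend on the range $[\alpha_0,\alpha_1]$ rather than being a universal constant.
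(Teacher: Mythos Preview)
Your overall strategy—bound $|a_\varepsilon|\le 1$ and estimate $\int_{\bR^d}|k^\alpha(x+z,y)-k^\alpha(x,y)|\,|y|^{-d-\alpha}\,dy$ by splitting according to the size of $|y|$ relative to $|x|$—is viable and differs from the paper's route, which first rescales via the identity $k(rx)=r^{-d}k(x,r)$ to normalize $|\hat x|=1$ and then decomposes into four regions according to the sizes of $|y|$ and $|y+\hat x|$ relative to $1/r=|z|/|x|$ and $1/2$, ultimately obtaining $\gamma=\min\{1-\alpha_1,\alpha_0\}$. However, as written your plan has two genuine gaps.

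First, on $|y|\le|x|/2$: applying the mean value theorem once in $z$ as you describe gives only $|k^\alpha(x+z,y)-k^\alpha(x,y)|\le N|z|\,|x|^{-d-1+\alpha}$, a bound \emph{independent of $y$}; integrating this against $|y|^{-d-\alpha}$ over $|y|\le|x|/2$ diverges at $y=0$. You must exploit the second-difference structure—apply the mean value theorem once more, in $y$, to $\nabla_x k^\alpha(x,y)$—to gain a factor of $|y|$, giving $N|z|\,|y|\,|x|^{-d-2+\alpha}$, which does integrate (to $N(1-\alpha)^{-1}|z|\,|x|^{-d-1}$). Second, on the annulus $|x|/2<|y|\le 2|x|$: your stated bound $N|z|^\alpha|x|^{-d}$ is \emph{not} of the form $|z|^{\gamma}|x|^{-d-\gamma}$ you claim in the assembly step, and it does not imply \eqref{05141741} (it is too weak as $|x|\to\infty$ with $|z|$ fixed). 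The correct bound is $N|z|^\alpha|x|^{-d-\alpha}$: since $|y|^{-d-\alpha}\sim|x|^{-d-\alpha}$ on this annulus, that factor must be retained. To obtain it, one splits the annulus further by whether $|x+y|\le 2|z|$ (use local integrability of $|\cdot|^{-d+\alpha}$ directly) or $|x+y|>2|z|$ (MVT in $z$, then integrate $|x+y|^{-d-1+\alpha}$ over $2|z|\le|x+y|\lesssim|x|$, producing $|z|^{\alpha-1}$); this extra split at the scale $|z|$ near the singularity $y=-x$ is precisely what the paper's finer four-region decomposition accomplishes. With both fixes your route goes through, but neither step works as stated.
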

\begin{proof}
	In fact, the lemma follows from the proof of \cite[Lemma 14]{MR3145767}, where the authors prove, instead of \eqref{eq0522_01}, the boundedness of $\cL^a$ in $L_p$ spaces {\em without weights} by obtaining
	\[
	\int_{|x|> 4|z|} |k_\varepsilon(x+z) - k_\varepsilon(x)| \, dx \leq C, \quad z \in \bR^d
	\]
	instead of \eqref{05141741}.
	However, the proof there also shows the inequality \eqref{05141741}.
	For the reader's convenience, here we present a slightly simplified version of the proof of \cite[Lemma 14]{MR3145767} dedicated to verify the inequality \eqref{05141741}.
	
	Fix $\varepsilon\in(0,1)$.
	For notational simplicity, we replace $a_\varepsilon$, $k_\varepsilon$ with $a$, $k$, respectively.
	For any $r>0$,
\begin{align*}
&k(rx)=\int_{\bR^d} k^\alpha(rx,y)a(y)\frac{dy}{|y|^{d+\alpha}}\\
&	=\int_{\bR^d}\left( |rx+y|^{-d+\alpha}-|rx|^{-d+\alpha}\right)a(y)\frac{dy}{|y|^{d+\alpha}}\\
&	= r^{-d+\alpha}\int_{\bR^d} \left(|x+\frac{y}{r}|^{-d+\alpha}-|x|^{-d+\alpha}\right)a(y)\frac{dy}{|y|^{d+\alpha}}\\
&=r^{-d}\int_{\bR^d}\left(|x+y|^{-d+\alpha}-|x|^{-d+\alpha}\right)a(ry)\frac{dy}{|y|^{d+\alpha}}\\
&	=:r^{-d}k(x,r).
\end{align*}
	Hence, for $|x|>4|z|$,  by writing
\[
k(x+z) - k(x) =k\left(|x|\frac{x+z}{|x|}\right) - k\left(|x|\frac{x}{|x|}\right)
\]
and using the above $k(\cdot,\cdot)$, we have
\begin{align*}
 &|k(x+z)-k(x)|=|x|^{-d} \left(k\left(\frac{x+z}{|x|},|x|\right)-k\left(\frac{x}{|x|},|x|\right)\right)\\
&	\le
	|x|^{-d}\int \left \lvert \left \lvert\hat x+\frac{\hat z}{r}+y\right\rvert^{-d+\alpha}-\left\lvert\hat x +\frac{\hat z}{r}\right\rvert^{-d+\alpha}-|\hat x+y|^{-d+\alpha}+|\hat{x}|^{-d+\alpha}\right \rvert \frac{dy}{|y|^{d+\alpha}},
 \end{align*}
where $r=|x|/|z| (>4)$, $\hat x=x/|x|$ and $\hat z=z/|z|.$
	By using an orthogonal transformation, we may assume that $\hat x=e=(1,0,\dots,0)\in\bR^d.$
	We split
\begin{align*}
	I&=\int \left \lvert \left \lvert e+\frac{\hat z}{r}+y\right \rvert ^{-d+\alpha}-\left \lvert e +\frac{\hat z}{r}\right\rvert^{-d+\alpha}-|e+y|^{-d+\alpha}+|e|^{-d+\alpha}\right \rvert \frac{dy}{|y|^{d+\alpha}}\\
&	=\int_{|y|\le \frac{1}{r}}\cdots+\int_{\frac{1}{r}\le |y|, \frac{1}{2}\le|y+e| }\cdots +\int_{|y+e|\le \frac{2}{r}}\cdots +\int_{\frac{2}{r}\le |y+e|\le \frac{1}{2}} \cdots\\
&	=: I_1+I_2+I_3+I_4.
\end{align*}
	For any $a,b\in \bR^d$ satisfying $|a+tb|>0$ for any $t\in [0,1]$, it follows that
	\begin{equation}\label{0502_1}
	\begin{aligned}
	\left \lvert  |a+b|^{-d+\alpha}-|a|^{-d+\alpha}\right \rvert &\le N|b|\int_0^1|a+tb|^{-d+\alpha-1}\,dt
	\\
	&\le N|b| (\underset{0\le t\le 1}{\inf}|a+tb|)^{-d+\alpha-1}.
	\end{aligned}
	\end{equation}
{\em Estimate of $I_1$.}
	For any $0\le t\le 1$ and $|y|\le 1/r$,
	$$
	|e+ty|, \quad |e+\frac{\hat z}{r}+ty|\ge \frac{1}{2}.
	$$
	Thus, bearing (\ref{0502_1}) in mind,
\begin{align*}
I_1&\le \int_{|y|\le \frac{1}{r}}\left \lvert |e+\frac{\hat z}{r}+y|^{-d+\alpha}-|e +\frac{\hat z}{r}|^{-d+\alpha}\right\rvert+\left\lvert|e+y|^{-d+\alpha}-|e|^{-d+\alpha}\right \rvert \frac{dy}{|y|^{d+\alpha}}\\
	&\le N\int_{|y|\le \frac{1}{r}} \frac{dy}{|y|^{d+\alpha-1}}\le N r^{\alpha-1}.
\end{align*}
{\em Estimate of $I_2$.}
	For any $0\le t\le 1$ and $|y+e|\ge \frac{1}{2}$,
	$$
	|e+t\frac{\hat z}{r}+y|, \quad |e+t\frac{\hat z}{r}|\ge \frac{1}{4}.
	$$
	By (\ref{0502_1}), $I_2$ is bounded by
\begin{align*}
	&
	\int_{\frac{1}{r} \le |y|,\frac{1}{2}\le |e+y|}\left \lvert |e+\frac{\hat z}{r}+y|^{-d+\alpha}-|e +y|^{-d+\alpha}\right\rvert+\left\lvert|e+\frac{\hat z}{r}|^{-d+\alpha}-|e|^{-d+\alpha}\right \rvert \frac{dy}{|y|^{d+\alpha}}\\
&	\le
	\frac{N}{r}\int_{\frac{1}{r}\le |y|}\frac{dy}{|y|^{d+\alpha}}
	\le N r^{\alpha-1}.
\end{align*}
{\em Estimate of $I_3$.}
For $|y+e|\le \frac{2}{r}$, we see that
	$$
	|y|\ge \frac{1}{2}, \quad
	|e+\frac{\hat z}{r}+y|\le \frac{3}{r}.
	$$
	Therefore, using $|e+\hat{z}/r|\ge 1/2$,
\begin{align*}
	I_3&\le N\int_{|e+\frac{\hat z}{r}+y|\le \frac{3}{r}}\left( |e+\frac{\hat z}{r}+y|^{-d+\alpha}+1\right)\,dy
	+N\int_{|e+y|\le \frac{2}{r}}\left(|e+y|^{-d+\alpha}+1\right)\,dy\\
&	\le N\int_{|y|\le \frac{3}{r}}|y|^{-d+\alpha}\,dy
	\le N r^{-\alpha}.
\end{align*}
{\em	Estimate of $I_4$.}
	For $\frac{2}{r}\le |y+e|\le \frac{1}{2}$, $t\in[0,1]$, we have
	\[
	\frac{1}{r}\le |e+t\frac{\hat z}{r}+y|\le 1, \quad \frac{1}{2}\le |y|, \quad
	|e+t\frac{\hat{z}}{r}|\ge \frac{1}{2}.
	\]
	By (\ref{0502_1}), we arrive at
\begin{align*}
	I_4
&	\le
	\int_{\frac{2}{r}\le |e+y|\le \frac{1}{2}}\left \lvert |e+\frac{\hat z}{r}+y|^{-d+\alpha}-|e +y|^{-d+\alpha}\right\rvert+\left\lvert|e+\frac{\hat z}{r}|^{-d+\alpha}-|e|^{-d+\alpha}\right \rvert \frac{dy}{|y|^{d+\alpha}}\\
&	\le N r^{-1}\int_0^1 \int_{\frac{1}{r}\le |e+t\frac{\hat z}{r}+y|\le 1}|e+t\frac{\hat z}{r}+y|^{-d+\alpha-1}\,dy\,dt\\
&\quad 	+N r^{-1}\int_0^1\int_{\frac{2}{r}\le |e+y|\le 1/2}|e+t\frac{\hat z}{r}|^{-d+\alpha-1} \,dy\,dt\\
&	\le N r^{-1}\int_{\frac{1}{r}\le |y|\le 1}|y|^{-d+\alpha-1}\,dy+N r^{-1}\le N r^{-\alpha}.
\end{align*}
Finally, by taking $\gamma = \min \{ 1 - \alpha_1, \alpha_0\}$, we have
	\[
	I\le N r^{ -\gamma}.
	\]
The lemma is proved.
\end{proof}

Let us continue the proof of Proposition \ref{05132149}.
Clearly, we can assume that $K_1 = 1$.
By combining Lemmas \ref{05191147},  \ref{05191149}, and \ref{0501_5}, we see that the assertion in Proposition \ref{05132149}  holds when $\alpha\in(0,1)$.

To complete the proof of Proposition \ref{05132149}, we proceed as follows.
First, we prove again the inequality \eqref{eq0522_01} for $\alpha \in [3/4,1)$ to present the exact dependency of the constant $N$ on $\alpha$ as $\alpha \nearrow 1$.
Then, using this result, we prove the inequality \eqref{eq0522_01} for $\alpha \in (1,2)$.
In particular, we show that the constant $N$ can be chosen independent of $\alpha$ if $1 < \alpha_0 \leq \alpha <2$.
Finally, we prove the case $\alpha = 1$.
In the steps below, we use the idea in the proof of \cite[Lemma 15]{MR3145767}.

As above, we set
$a_\varepsilon(y)=a(y)I_{\varepsilon<|y|<\varepsilon^{-1}}$ and denote
\[
\cL_\varepsilon^a u (x) = \int_{\bR^d} \nabla_y^\alpha u(x) a_\varepsilon(y)\frac{dy}{|y|^{d+\alpha}}.
\]
Then we have
\[
\cL u(x)=\lim_{\varepsilon\to 0} \cL_\varepsilon^a u (x).
\]
Thus, to prove \eqref{eq0522_01} it suffices to show that
\begin{equation}
							\label{eq0922_03}
\|\cL_\varepsilon^a u \|_{L_{p,w}} \leq N \|\partial^\alpha u\|_{L_{p,w}},
\end{equation}
where $N = N(d,p,\alpha, [w]_{A_p})$, but independent of $\varepsilon$.
Let $\alpha \in [3/4,1)$.
By Lemma \ref{0501_1}, we see that
\begin{align}
							\label{eq0922_02}
&\cL_\varepsilon^a u(x) = N_0 \int_{\bR^d}\int_{\bR^d} k^{1/2}(z,y)\partial^{1/2}u(x-z)\,dz \, a_\varepsilon(y)\frac{dy}{|y|^{d+\alpha}}\notag
\\
&=N_0\int_{\bR^d}\int_{\bR^d} k^{1/2}(z,y)(\partial^{1/2}u(x-z)-\partial^{1/2}u(x))\,dz \, a_\varepsilon(y)\frac{dy}{|y|^{d+\alpha}}\notag
\\
&=N_0\int_{\bR^d}\left(\int_{\bR^d} k^{1/2}(z,y) a_\varepsilon(y)\frac{dy}{|y|^{d+\alpha}}\right)(\partial^{1/2}u(x-z)-\partial^{1/2}u(x))\,dz,
\end{align}
where $N_0 = N_0(d)$ and the second equality is due to
\[
\int_{\bR^d} k^{1/2}(z,y) \, dz  = 0.
\]
Note that, if we set
\[
m_\varepsilon(z) := \int_{\bR^d} \left( \left| \frac{z}{|z|} + y\right|^{-d+1/2} - 1 \right) a_\varepsilon(|z|y) \frac{dy}{|y|^{d+\alpha}},
\]
then, for $z \neq 0$,
\[
\int_{\bR^d} k^{1/2}(z,y) a_\varepsilon(y)\frac{dy}{|y|^{d+\alpha}} = \frac{1}{|z|^{d-1/2+\alpha}} m_\varepsilon(z).
\]
From \eqref{eq0922_02}, we get that
\begin{equation}\label{eq0922_07}
 \cL_\varepsilon^a u(x) = N_0 \int_{\bR^d} (\partial^{1/2}u(x-z)-\partial^{1/2}u(x)) m_\varepsilon(z) \, \frac{dz}{|z|^{d-1/2+\alpha}}.
\end{equation}
If we have
\begin{equation}\label{eq0922_01}
|m_{\varepsilon}(z)| \leq (1-\alpha)^{-1}N
\end{equation}
for any $z\neq 0$, where $N$ depends only on $d$, but independent of $\varepsilon$,
then the representation (\ref{eq0922_07}) along with Proposition \ref{05132149} for order $\alpha-1/2$ proves \eqref{eq0922_03}.
Thus, it remains to show \eqref{eq0922_01}.
Upon denoting $\hat{z}=z/|z|$, we write
\begin{align*}
m_\varepsilon(z)&=\int_{|y|\le 1/2}\left( \frac{1}{|\hat{z}+y|^{d-1/2}}-1\right)a_\varepsilon(|z|y)\frac{dy}{|y|^{d+\alpha}}\\
&\quad +\int_{|y|>1/2}\left(\frac{1}{|\hat{z}+y|^{d-1/2}}-1\right)a_\varepsilon(|z|y)\frac{dy}{|y|^{d+\alpha}}=:I_\varepsilon+J_\varepsilon.
\end{align*}
Set
\[
f(t)=\frac{1}{|\hat{z}+ty|^{d-1/2}}, \quad t \in [0,1].
\]
Then, since $|\hat{z}+ty|\ge 1/2$ for $|y|\le 1/2$,
	\[\left|\frac{1}{|\hat{z}+y|^{d-1/2}}-1\right|=|f(1)-f(0)|
\le \sup_{t\in[0,1]}|f'(t)|\le N|y|.
	\]
	Therefore,
\[
|I_\varepsilon| \le N\int_{|y|\le 1/2} |a_\varepsilon(|z|y)|\frac{dy}{|y|^{d+\alpha-1}}
\le (1-\alpha)^{-1}N,
\]
where $N = N(d)$.
	To estimate $J_\varepsilon$, we split
	\[
	J_\varepsilon
	=\int_{|y|>1/2, |\hat{z}+y|\ge 1/4}\left(\frac{1}{|\hat{z}+y|^{d-1/2}}-1\right)a_\varepsilon(|z|y)\frac{dy}{|y|^{d+\alpha}}
	\]
	\[
	+\int_{ |\hat{z}+y|\le 1/4}\left(\frac{1}{|\hat{z}+y|^{d-1/2}}-1\right)a_\varepsilon(|z|y)\frac{dy}{|y|^{d+\alpha}},
	\]
where both integrals are bounded by a constant depending only on $d$ because $\alpha \geq 3/4$,
\[
\left|(|\hat{z}+y|^{-d+1/2}-1)a_\varepsilon(|z|y)\right| \leq N
\]
in the first integral, and $|y| \geq 1/2$ in the second integral.
Hence, we have obtained \eqref{eq0922_01}.
Since $1/4\le\alpha-1/2<1/2$, it follows that (\ref{eq0922_03}) holds with
\begin{equation}\label{eq0922_05}
N=(1-\alpha)^{-1}N(d,p, \alpha-1/2, [w]_{A_p})=(1-\alpha)^{-1}N(d,p,[w]_{A_p}).
\end{equation}

For $\alpha\in (1,2)$, $\cL^a u$ can be written in the form
\[
\sum_{i=1}^d \int_{\bR^d} \left(D_i u(x+y)-D_iu(x)\right)a_i(y)\frac{dy}{|y|^{d+\alpha-1}} =\sum_{i=1}^d \cL^{a_i}(D_i u)(x),
\]
where
\[
a_i(y)=\frac{y^i}{|y|}\int_0^1 a(y/s)s^{-1+\alpha}\,dx.
\]
Note that the order of $\cL^{a_i}$ is $\alpha-1$.
By Proposition \ref{05132149} for $\alpha \in (0,1)$ just proved above and Lemma \ref{05191046},
\[
\|\cL^a u\|_{L_{p,w}}\le \sum_{i=1}^d \|\cL^{a_i}D_i u\|_{L_{p,w}}
\le N \| \partial^{\alpha-1}\nabla u\|_{L_{p,w}}\le N \|\partial^\alpha u\|_{L_{p,w}}.
\]
From $|a|\le (2-\alpha)$  and (\ref{eq0922_05}),
we see that $N = N(d,p,\alpha_0,[w]_{A_p})$ if $1 < \alpha_0 \leq \alpha < 2$ with no blow-up as $\alpha \nearrow 2$.

Finally, we prove \eqref{eq0922_03} for $\alpha \in (3/4,5/4)$ with $N = N(d,p,[w]_{A_p})$ under the additional assumption that $a(y)$ satisfies \eqref{09131350}.
Then, this proves \eqref{eq0522_01} when $\alpha = 1$.
In addition, this observation combined with the results proved above justifies the last assertion in the proposition that $N$ can be chosen depending on $\alpha_0$, but independent of $\alpha \in (0,2)$ if $0 < \alpha_0 \leq \alpha < 2$.
In particular, the constant $N$ does not blow up as $\alpha \to 1$ if \eqref{09131350} is satisfied.
 Using $(\ref{09131350})$, we have
\[
\cL_\varepsilon^a u(x) = N_0\int_{\bR^d} (\partial^{1/2}u(x-z)-\partial^{1/2}u(x)) m_\varepsilon(z) \, \frac{dz}{|z|^{d+\alpha-1/2}},
\]
where
\[
m_\varepsilon(z)=\int_{|y|\le 1/2}\left( \frac{1}{|\hat{z}+y|^{d-1/2}}-1-(-d+1/2)(\hat{z},y)\right)a_\varepsilon(|z|y)\frac{dy}{|y|^{d+\alpha}}
	\]
	\[+\int_{|y|>1/2}\left(\frac{1}{|\hat{z}+y|^{d-1/2}}-1\right)a_\varepsilon(|z|y)\frac{dy}{|y|^{d+\alpha}}=:I_\varepsilon+J_\varepsilon.
\]
Then, since $|\hat{z}+ty|\ge 1/2$ for $|y|\le 1/2$,
\begin{align*}
&\left|\frac{1}{|\hat{z}+y|^{d-1/2}}-1-(-d+1/2)(\hat{z},y)\right|=|f(1)-f(0)-f'(0)|\\
&\le \frac{1}{2}\sup_{t\in[0,1]}|f''(t)|\le N|y|^2,
\end{align*}
we have
\[
|I_\varepsilon| \le N\int_{|y|\le 1/2} |a_\varepsilon(|z|y)|\frac{dy}{|y|^{d+\alpha-2}}
\le N,
\]
where $N = N(d)$. Note that the estimate of $J_\varepsilon$ still depends only on $d$.
By  using the proposition for $1/4< \alpha-1/2< 3/4$ proved above,  we have (\ref{eq0922_03}) with
\[N=N(d,p,\alpha-1/2, [w]_{A_p})=N(d,p,[w]_{A_p}).\]
This finishes the proof of Proposition \ref{05132149}.\qed

\section{Proof of main theorems}
\label{proof_sec}

We start with a key lemma.
\begin{lemma}\label{05021011}
Let $\beta\in (0,1)$, $\alpha\in (0,2)$, $p\in (1,\infty)$, $w\in A_p(\bR^d)$,  $\varepsilon\in (0,1)$, and $\varphi\in C_0^\infty(B_\varepsilon)$.
Also let $a(x,y)$ be a bounded measurable function.
Suppose that there is a continuous increasing function $\omega(s)$ for $s>0$,
such that
\[
|a(z_1,y)-a(z_2,y)|\le (2-\alpha) \omega(|z_1-z_2|), \quad z_1,z_2,y \in\bR^d,
\]
\[
\gamma(\tau)=\int_{|y|\le \tau}\omega(|y|)\frac{dy}{|y|^{d+\beta}}<\infty, \quad \tau>0,
\]
and, in the case of $\alpha=1$,
for  all $0<r<R$ and $x\in \bR^d$,
\[
\int_{r<|y|<R} ya(x,y)\frac{dy}{|y|^{d+\alpha}}.
\]
Also suppose that there is a positive constant $K_1$ such that
\[
\sup_{z\in B_{2\varepsilon},y\in \bR^d}|a(z,y)|\le (2-\alpha)K_1.
\]
Then there is a constant $N=N(d,p,\alpha,\beta,[w]_{A_p})$ such that for $u\in\mathcal{S}(\bR^d)$,
\begin{equation}
							\label{eq1004_01}
    \|\varphi \cL u\|_{L_{p,w}}^p\le N C(\varepsilon,\varphi, \omega,K_1)\|\partial^\alpha u\|_{L_{p,w}}^p,
\end{equation}
where
\[C(\varepsilon,\varphi, \omega,K_1)
=K_1^p\|\varphi\|_{L_\infty}^p
+\varepsilon^p K_1^p \|D\varphi\|_{L_\infty}^p
+ \varepsilon^{\beta p} \|\varphi\|_{L_\infty}^p\gamma^p(\varepsilon).
\]
The constant $N$ can be chosen so that $N = N(d,p, \alpha_0, \alpha_1, \beta,[w]_{A_p})$ if $0 < \alpha_0 \leq \alpha \leq \alpha_1 < 1$ and $N = N(d,p, \alpha_0, \beta,[w]_{A_p})$ if $1 < \alpha_0 \leq \alpha < 2$, respectively.
\end{lemma}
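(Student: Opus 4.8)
Here is the plan. The idea is to freeze the coefficient on the (small) support of $\varphi$, dispose of the frozen operator by Proposition \ref{05132149}, and treat the remaining error — which carries the smallness from the modulus $\omega$ — as a singular integral acting on $g:=\partial^\alpha u$. Write $a_0(y):=a(0,y)$ and $b(x,y):=a(x,y)-a_0(y)$, so that $\varphi\cL^a u=\varphi\cL^{a_0}u+\varphi\cL^{b}u$. Since $0\in B_{2\varepsilon}$ we have $|a_0(y)|\le(2-\alpha)K_1$, and when $\alpha=1$ the coefficient $a_0$ inherits the cancellation property of $a$; hence Proposition \ref{05132149} gives $\|\cL^{a_0}u\|_{L_{p,w}}\le NK_1\|\partial^\alpha u\|_{L_{p,w}}$, and multiplying by $\varphi$ accounts for the $K_1^p\|\varphi\|_{L_\infty}^p$ contribution to $C$, with $N$ depending on $\alpha$ exactly as in that proposition. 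For $\alpha\in[1,2)$ I would first lower the order, rewriting $\cL^a u=\sum_i\cL^{a_i}D_iu$ with $\cL^{a_i}$ of order $\alpha-1$ as in the proof of Proposition \ref{05132149} (and using Lemma \ref{05191046} to control $\partial^{\alpha-1}D_iu$ by $\partial^\alpha u$), so that everything reduces to $\alpha\in(0,1)$, with the $\alpha=1$ cancellation hypothesis absorbing the principal-value issues; thus I describe the case $\alpha\in(0,1)$.

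The heart of the matter is $\varphi\cL^{b}u$, where $b$ is merely bounded but small on $\operatorname{supp}\varphi$: $|b(x,y)|\le(2-\alpha)\min(\omega(|x|),2K_1)$ for $|x|<\varepsilon$. By Lemma \ref{0501_1} and the vanishing integral of $k^\alpha(\cdot,y)$,
\[
\varphi\cL^{b}u(x)=N_0\,\varphi(x)\int_{\bR^d}\big(g(x-z)-g(x)\big)K^{b}_x(z)\,dz,\qquad
K^{b}_x(z)=\int_{\bR^d}k^\alpha(z,y)\,b(x,y)\,\frac{dy}{|y|^{d+\alpha}}.
\]
I split the $y$-integral at $|y|=\varepsilon$ and, on $|y|\le\varepsilon$, replace $b(x,y)$ by $\big(b(x,y)-b(x+y,y)\big)+b(x+y,y)$. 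The first summand is bounded by $(2-\alpha)\omega(|y|)$, so combining $|u(x+y)-u(x)|\le N_0\int|k^\alpha(z,y)|\,|g(x-z)|\,dz$, the scaling identity $\int_{\bR^d}|k^\alpha(z,y)|\,dz=c(d,\alpha)|y|^\alpha$, and the bound $\int_{|y|\le\varepsilon}\omega(|y|)|y|^{-d}\,dy\le\varepsilon^\beta\gamma(\varepsilon)$ forced by \eqref{eq1022_03}, this portion of $|g|$ gets convolved against a kernel majorized by a radially decreasing $L^1$-function of $L^1$-norm $\le N\varepsilon^\beta\gamma(\varepsilon)$, hence is dominated pointwise by $N\varepsilon^\beta\gamma(\varepsilon)\,\cM g(x)$; the weighted Hardy--Littlewood maximal theorem then yields the $\varepsilon^{\beta p}\|\varphi\|_{L_\infty}^p\gamma^p(\varepsilon)$ term. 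What remains — the $b(x+y,y)$ summand and the range $|y|>\varepsilon$ — is folded into a Calder\'on--Zygmund estimate for the spatially dependent kernel $x\mapsto K^{b}_x$, which is mean-zero, of Calder\'on--Zygmund size $|K^{b}_x(z)|\le NK_1|z|^{-d}$, and H\"older continuous in $x$ with modulus $\omega$ (hence Dini, by \eqref{eq1022_03}), adapting the kernel computations of Proposition \ref{05132149} and \cite{MR3145767}; the associated $L^2$-bound has small norm $\le N\omega(\varepsilon)\|\varphi\|_{L_\infty}$ (Minkowski inequality plus Plancherel), while the commutator terms produced when $\varphi$ is moved past the translations $x\mapsto x\pm y$ (using $|\varphi(x)-\varphi(x+y)|\le\min\{\|D\varphi\|_{L_\infty}|y|,\,2\|\varphi\|_{L_\infty}\}$ and $\varphi(x+y)=0$ once $|y|$ exceeds $2\varepsilon$ on $\operatorname{supp}\varphi$), together with the size bound, produce the $\varepsilon^pK_1^p\|D\varphi\|_{L_\infty}^p$ term. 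Weighted Calder\'on--Zygmund theory then converts everything to $L_{p,w}$, and the $\alpha$-dependence is tracked as in the quoted results.

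I expect the main obstacle to be precisely this last step: because $b$ is only bounded in $y$, $\cL^{b}$ is not an operator with an integrable kernel (indeed $L_p$-boundedness of $\cL^a$ for merely bounded $a(\cdot,y)$ is an open problem), so pointwise maximal-function estimates do not suffice, and one must run the full weighted Calder\'on--Zygmund argument for an $x$-dependent kernel, carefully tracking how the modulus $\omega$ and the cutoff scale $\varepsilon$ enter each constant. The remaining estimates are of the routine type already carried out in Section \ref{aux_sec}.
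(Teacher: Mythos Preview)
Your approach is genuinely different from the paper's, and the place where you say ``I expect the main obstacle to be precisely this last step'' is indeed a real gap, not a routine verification.

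The paper does \emph{not} freeze the coefficient and estimate the remainder as an $x$-dependent Calder\'on--Zygmund operator. Instead it uses the H\"older regularity of $a(\cdot,y)$ through the potential representation
\[
\varphi(x)a(x,y)=N_0\int_{\bR^d}|x-z|^{-d+\beta}\,\partial_z^\beta\bigl(\varphi(\cdot)a(\cdot,y)\bigr)(z)\,dz ,
\]
valid because $\beta\in(0,1)$. Inserting this into $\varphi\cL^a u$ gives
\[
\varphi(x)\cL^a u(x)=N_0\int_{\bR^d}|x-z|^{-d+\beta}\,\cL_z^{\partial^\beta(\varphi a)}u(x)\,dz ,
\]
and for each fixed $z$ the coefficient $y\mapsto \partial_z^\beta(\varphi(\cdot)a(\cdot,y))(z)$ depends on $y$ only, so Proposition~\ref{05132149} applies directly. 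The $z$-integral is controlled by H\"older's inequality (this forces $p>d/\beta$) together with Minkowski, and the sup of $|\partial_z^\beta(\varphi a)|$ is estimated by the four pieces $I_1,\dots,I_4$ that produce exactly the three terms in $C(\varepsilon,\varphi,\omega,K_1)$. The restriction $p>d/\beta$ is then removed by Rubio de Francia extrapolation over $A_p$ weights. No case split on $\alpha$ is needed (the exponent $\beta$, not $\alpha$, is used in the representation), so your order-lowering reduction is unnecessary here.

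The reason your outline stalls is that the paper's device is precisely designed to \emph{avoid} the $x$-dependent kernel problem you arrive at. Two concrete issues in your sketch:
\begin{itemize}
\item The ``weighted Calder\'on--Zygmund argument for an $x$-dependent kernel'' is not a black box you can invoke. For a non-convolution kernel one needs, beyond size and smoothness in the kernel variable (which Lemma~\ref{0501_5} would give for each frozen $x$), either smoothness in the other variable or a $T(1)$-type cancellation condition; you assert H\"older continuity of $x\mapsto K^b_x$ with modulus $\omega$, but you do not check the combined regularity of $(x,z')\mapsto K^b_x(x-z')$ needed for weighted theory, nor do you establish the $L^2$ bound for the \emph{variable}-coefficient operator (Plancherel gives it only for each frozen $x$).
\item The claim that the ``first summand'' is pointwise dominated by $N\varepsilon^\beta\gamma(\varepsilon)\,\cM g(x)$ via a radially decreasing $L^1$ majorant is unproved. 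The effective kernel $z\mapsto\int_{|y|\le\varepsilon}\omega(|y|)\,|k^\alpha(z,y)|\,|y|^{-d-\alpha}\,dy$ has the right $L^1$ norm, but $|k^\alpha(z,y)|$ is singular at both $z=0$ and $z=-y$, and a radially decreasing majorant is not immediate; in weighted spaces you cannot fall back on Young's inequality or Minkowski in $z$ because translations are not uniformly bounded on $L_{p,w}$.
\end{itemize}
In short, the freezing-plus-remainder strategy leads you straight into the very difficulty (boundedness of $\cL^a$ with rough $x$-dependence) that the lemma is meant to resolve; the paper's superposition trick converts that difficulty into a one-parameter family of constant-coefficient problems, each handled by Proposition~\ref{05132149}, with extrapolation cleaning up the range of $p$.
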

\begin{proof}

We first consider the case when $p\in (d/\beta,\infty)$.
To clarify the dependency of $N$ with respect to $\alpha$, we use $N_1$ when the constants in the inequalities below do not depend on $\alpha$.
For a measurable function $f$ on $\bR^d\times \bR^d$ and $r>0$, denote
\[
\mathcal{K}(r,f)=(2-\alpha)^{-1}\sup_{z\in B_r, y\in\bR^d}|f(z,y)|.
\]

Assume that  $a(x,y)$ are infinitely differentiable in $x$.
By Lemma \ref{0501_1}, we see that, for any $x\in \bR^d$,
\begin{equation*}
    \varphi(x)a(x,y)=N_1\int_{\bR^d}|x-z|^{-d+\beta}\partial^\beta  (\varphi(\cdot) a(\cdot,y))(z)\,dz.
\end{equation*}
Using this, we observe that for any $x \in \bR^d$,
\begin{align}
&\varphi(x)\cL^a u(x)=
\int_{\bR^d}\nabla_y^\alpha u(x)\varphi(x)a(x,y)\frac{dy}{|y|^{d+\alpha}}\notag\\
&= N_1\int_{\bR^d}\nabla_y^\alpha u(x) \left( \int_{\bR^d} |x-z|^{-d+\beta} \partial^\beta \big(\varphi(\cdot)a(\cdot,y)\big)(z)\,dz \right)\frac{dy}{|y|^{d+\alpha}}\notag\\
&=N_1
\int_{\bR^d}|x-z|^{-d+\beta} \left(\int_{\bR^d}\nabla_y^\alpha u(x)\partial^\beta \big(\varphi(\cdot)a(\cdot,y)\big)(z)\frac{dy}{|y|^{d+\alpha}}\right)\,dz\notag\\
    \label{05182115}
&:=N_1\int_{\bR^d} |x-z|^{-d+\beta} \cL_z^{\partial^\beta(\varphi a)}u(x)\,dz.
\end{align}

For $|x|\le \varepsilon$,
\begin{equation}\label{0501_3}
    |x-z|^{-d+\beta}\le N_1 |z|^{-d+\beta},\quad |z|\ge 2\varepsilon.
\end{equation}
Hence, from (\ref{05182115}) and (\ref{0501_3}),  we have
\begin{align*}
&\int_{\bR^d} |\varphi(x)\cL^a u(x)|^p w(x) \,dx =\int_{B_\varepsilon} |\varphi(x)\cL^a u(x)|^p w(x) \,dx\\
&\le N_1\int_{B_{\varepsilon}}\left(\int_{\bR^d}|x-z|^{-d+\beta} \left \lvert \cL_z^{\partial^\beta (\varphi a)}u(x)\right\rvert\,dz\right) ^p w(x) \,dx\\
&\le N_1 \int_{B_\varepsilon}\left(\int_{B_{2\varepsilon}}|x-z|^{-d+\beta }\left|\cL_z^{\partial^\beta (\varphi a)}u(x)\right|\,dz\right)^p w(x) \,dx\\
&\quad + N_1\int_{B_\varepsilon}\left(\int_{B_{2\varepsilon}^c}|z|^{-d+\beta }\left|\cL_z^{\partial^\beta (\varphi a)}u(x)\right|\,dz\right)^p w(x) \,dx\\
&=: J_1+J_2.
\end{align*}

\noindent
{\em Estimate of $J_1$.}
Using H\"older's inequality, we have
\[
J_1\le N_1\int_{B_{\varepsilon}} \left(\int_{B_{2\varepsilon}}|x-z|^{(-d+\beta)\frac{p}{p-1}}\,dz\right)^{p-1}\left(\int_{B_{2\varepsilon}}\left|\cL_z^{\partial^\beta (\varphi a)}u(x)\right|^p\,dz\right) w(x) \,dx.
\]
Note that, for $|x|\le \varepsilon$,
\[
\left(\int_{B_{2\varepsilon}}|x-z|^{(-d+\beta)\frac{p}{p-1}}\,dz\right)^{p-1}
\le
\left(\int_{B_{3\varepsilon}}|z|^{(-d+\beta)\frac{p}{p-1}}\,dz\right)^{p-1}
\le
N_1\varepsilon^{\beta p-d}.
\]
Hence, we have that
\begin{multline*}
J_1 \le N_1\varepsilon^{\beta p-d} \int_{B_{\varepsilon}}\int_{B_{2\varepsilon}}\left|\cL_z^{\partial^\beta (\varphi a)}u(x)\right|^p w(x) \,dz\,dx
\\
\le N_1\varepsilon^{\beta p-d} \int_{B_{2\varepsilon}}\int_{\bR^d}\left|\cL_z^{\partial^\beta (\varphi a)}u(x)\right|^p w(x) \,dx\,dz.
\end{multline*}
By Proposition \ref{05132149}, we see that
\begin{align}
J_1&\le N_1N^p \varepsilon^{\beta p-d}(2-\alpha)^{-p} \int_{B_{2\varepsilon}}\left(\sup_{y\in\bR^d}|\partial_z^\beta\big(\varphi(\cdot) a(\cdot,y)\big)(z)|^p \right)\int_{\bR^d} |\partial ^\alpha u|^p w(x) \,dx\,dz \nonumber
\\
&\le  N_1N^p \varepsilon^{\beta p}(2-\alpha)^{-p} \left(\sup_{y\in\bR^d,z\in B_{2\varepsilon}}|\partial_z^\beta\big(\varphi(\cdot) a(\cdot,y)\big)(z)|^p \right)\int_{\bR^d} |\partial ^\alpha u|^p w(x) \,dx, \label{eq0523_03}
\end{align}
where $N$ is the constant in  Proposition \ref{05132149}.
To estimate the supremum of
$$
|\partial_z^\beta\big(\varphi(\cdot) a(\cdot,y)\big)(z)|
$$
in \eqref{eq0523_03}, we observe that, for any $y,z\in \bR^d$,
\begin{align}
&|\partial_z^\beta\big(\varphi(\cdot) a(\cdot,y)\big)(z)|\notag\\
&\le N_1
\int_{|h|>\varepsilon} |\varphi(z+h)a(z+h,y)-\varphi(z)a(z,y)|\frac{dh}{|h|^{d+\beta}}\notag\\
&\quad +N_1
\int_{|h|<\varepsilon} |\varphi(z+h)a(z+h,y)-\varphi(z)a(z,y)|\frac{dh}{|h|^{d+\beta}}\notag\\
&\le N_1
\int_{|h|>\varepsilon} |\varphi(z+h)a(z+h,y)-\varphi(z)a(z,y)|\frac{dh}{|h|^{d+\beta}}\notag\\
&\quad +N_1|a(z,y)|\int_{|h|\le \varepsilon}|\varphi(z+h)-\varphi(z)|\frac{dh}{|h|^{d+\beta}}\notag\\
&\quad +N_1|\varphi(z)|\int_{|h|\le \varepsilon} |a(z+h,y)-a(z,y)|\frac{dh}{|h|^{d+\beta}}\notag\\
&\quad +N_1\int_{|h|\le \varepsilon} |a(z+h,y)-a(z,y)||\varphi(z+h)-\varphi(z)|\frac{dh}{|h|^{d+\beta}}\notag\\
                \label{05182153}
&=: I_1+I_2+I_3+I_4.
\end{align}
For any $y\in \bR^d$ and $z\in B_{2\varepsilon}$,
\begin{align*}
I_1
&\le N_1 \sup_{z,y}|\varphi(z)a(z,y)|\varepsilon^{-\beta}\\
&\le N_1\sup_{z\in B_{2\varepsilon},y\in \bR^d}|a(z,y)|\|\varphi\|_{L_\infty}\varepsilon^{-\beta}
\le N_1(2-\alpha)  \mathcal{K}(2\varepsilon,a) \|\varphi\|_{L_\infty}\varepsilon^{-\beta},
\end{align*}
\[
I_2\le N_1 \sup_{z\in B_{2\varepsilon},y\in \bR^d}|a(z,y)| \|D\varphi\|_{L_\infty}\varepsilon^{1-\beta}
\le N_1 (2-\alpha)\mathcal{K}(2\varepsilon,a)\|D\varphi\|_{L_\infty}\varepsilon^{1-\beta},
\]
\[
I_3,I_4\le N_1(2-\alpha) \|\varphi\|_{L_\infty}\gamma(\varepsilon).
\]
From the above estimates of $I_i$, $i=1,2,3,4$, together with \eqref{eq0523_03}, we obtain that
\begin{equation}
								\label{eq0523_04}
J_1 \leq N C(\varepsilon, \varphi, \omega, \mathcal{K}(2\varepsilon,a)) \|\partial^\alpha u\|_{L_{p,w}}^p,
\end{equation}
where $N$ can be chosen as described in the lemma.

\vspace{1em}

\noindent
{\em Estimate of $J_2$.}
By the Minkowski inequality
\[
J_2^{1/p} = N_1 \left\| \int_{B_{2\varepsilon}^c}|z|^{-d+\beta}\left|\cL_z^{\partial^\beta (\varphi a)}u(\cdot)\right|\,dz \right\|_{L_{p,w}(B_\varepsilon)}
\]
\[
\leq N_1 \int_{B_{2\varepsilon}^c}|z|^{-d+\beta} \left\|\cL_z^{\partial^\beta (\varphi a)}u(\cdot)\right\|_{L_{p,w}(B_\varepsilon)} \, dz
\]
\[
\leq N_1 \int_{B_{2\varepsilon}^c}|z|^{-d+\beta} \left\|\cL_z^{\partial^\beta (\varphi a)}u(\cdot)\right\|_{L_{p,w}(\bR^d)} \, dz.
\]
Then by Proposition \ref{05132149}, it follows that
\[
J_2^{1/p} \leq N_1N \|\partial^\alpha u\|_{L_{p,w}} \int_{B_{2\varepsilon}^c}|z|^{-d+\beta} \sup_{y \in \bR^d} |\partial_z^\beta \left( \varphi(\cdot) a(\cdot,y)\right)(z)| \, dz,
\]
where $N$ is the constant in  Proposition \ref{05132149}.
For $z \in B_{2\varepsilon}^c$, we see from \eqref{05182153} that
\[
I_2 = I_3 = I_4 = 0
\]
and
\[
I_1 =N_1 \int_{|h|>\varepsilon} |\varphi(z+h) a(z+h,y)| \frac{dh}{|h|^{d+\beta}},
\]
which imply that $J_2^{1/p}$ is bounded by
\[
N_1 N(2-\alpha)^{-1} \|\partial^\alpha u\|_{L_{p,w}} \int_{B_{2\varepsilon}^c} |z|^{-d+\beta} \int_{|h|> \varepsilon} \sup_{y \in \bR^d} |\varphi(z+h) a(z+h,y)| \, \frac{dh}{|h|^{d+\beta}} \, dz
\]
\[
= N_1 N(2-\alpha)^{-1} \|\partial^\alpha u\|_{L_{p,w}} \int_{|h|> \varepsilon}\int_{B_{2\varepsilon}^c} |z|^{-d+\beta}  \sup_{y \in \bR^d} |\varphi(z+h) a(z+h,y)| \, dz \, \frac{dh}{|h|^{d+\beta}},
\]
where
\[
|z|^{-d+\beta}\sup_{y \in \bR^d} |\varphi(z+h) a(z+h,y)| = 0
\]
for $z+h \in B_\varepsilon^c$ and
\[
|z|^{-d+\beta}\sup_{y \in \bR^d} |\varphi(z+h) a(z+h,y)| \leq |z|^{-d+\beta} \sup_{z \in B_\varepsilon, y \in \bR^d} |a(z,y)| \|\varphi\|_{L_\infty}
\]
for $z+h \in B_\varepsilon$.
Thus,
\begin{align*}
&\int_{B_{2\varepsilon}^c} |z|^{-d+\beta}  \sup_{y \in \bR^d} |\varphi(z+h) a(z+h,y)| \, dz\\
&\leq \int_{B_{2\varepsilon}^c \cap \{z+h \in B_\varepsilon\}} \varepsilon^{-d+\beta} \sup_{z \in B_\varepsilon, y \in \bR^d} |a(z,y)| \|\varphi\|_{L_\infty} \, dz\\
&\leq N_1 \varepsilon^\beta \sup_{z \in B_\varepsilon, y \in \bR^d} |a(z,y)| \|\varphi\|_{L_\infty}.
\end{align*}
This shows that
\begin{align*}
J_2^{1/p} &\leq N_1 N(2-\alpha)^{-1} \|\partial^\alpha u\|_{L_{p,w}} \int_{|h|>\varepsilon} \varepsilon^\beta \sup_{z \in B_\varepsilon, y \in \bR^d}|a(z,y)|\| \varphi\|_{L_\infty} \, \frac{dh}{|h|^{d+\beta}}\\
&= N_1 N(2-\alpha)^{-1} \|\partial^\alpha u\|_{L_{p,w}} \sup_{z \in B_\varepsilon, y \in \bR^d}|a(z,y)| \|\varphi\|_{L_\infty}\\
&\leq N_1 N \mathcal{K}(2\varepsilon,a) \|\partial^\alpha u\|_{L_{p,w}} \|\varphi\|_{L_\infty}.
\end{align*}

By combining the above estimate for $J_2$ and \eqref{eq0523_04} for $J_1$,  we see that the inequality \eqref{eq1004_01} holds for $a(x,y)$ which is infinitely differentiable in $x$ with $C(\varepsilon, \varphi,\omega, K_1)$ replaced with $C\left(\varepsilon, \varphi,\omega,\mathcal{K}(2\varepsilon,a)\right)$.

For general $a$, set
\[
a_n(x,y)=a(x,y)*\eta_n(x), \quad \eta(x)= n^{d} \eta(nx)
\]
for $n=1,2,\dots$, where $\eta$ is a standard mollifier on $\bR^d$.
Note that
\[
|a_n(z_1,y)-a_n(z_2,y)|\le (2-\alpha) \omega(|z_1-z_2|), \quad z_1,z_2,y \in\bR^d.
\]
By the inequality \eqref{eq1004_01} proved above for $a_n$, we have
\[
\|\varphi \cL^{a_n} u\|_{L_{p,w}}^p \leq N C\left(\varepsilon, \varphi, \omega, \mathcal{K}(2\varepsilon,a_n)\right) \|\partial^\alpha u\|_{L_{p,w}}^p.
\]
Since for any $\rho>0$,
	\[
	\mathcal{K}(2\varepsilon,a_n)\le\mathcal{K}(2\varepsilon+\rho,a)
	\]
for large $n$, by Fatou's Lemma, we see that for any $\rho>0$,
\begin{align*}
&\|\cL^{a}u\|_{L_{p,w}}^p
\le \underset{n\to \infty}{\operatorname{liminf}}
\|\cL^{a_n}u\|_{L_{p,w}}^p\\
&\le \underset{n\to \infty}{\operatorname{liminf}} NC(\varepsilon,\rho,\omega,\mathcal{K}(2\varepsilon,a_n))\|\partial^\alpha u\|_{L_{p,w}}^p
\le
NC(\varepsilon,\rho,\omega,\mathcal{K}(2\varepsilon+\rho,a))\|\partial^\alpha u\|_{L_{p,w}}^p.
\end{align*}
Upon noting that
\[
\lim_{\rho \to 0}C(\varepsilon,\rho,\omega,\mathcal{K}(2\varepsilon+\rho,a)) \leq C(\varepsilon,\rho,\omega,K_1),
\]
we finish the proof when $p > d/\beta$.

For general $p\in (1,\infty)$, we apply the Rubio de Francia extrapolation theorem. See, for instance, \cite[Theorem 2.5]{MR3812104}.
The lemma is proved.
\end{proof}


We will prove Theorem \ref{05141210} by the standard freezing coefficient argument.
 One can find the argument in the proof of \cite[Lemma 1.6.3]{MR2435520} for second-order elliptic equations, and   in the proof of \cite[Lemma 8]{MR3145767} for non-local equations.
To show the exact parameters which the constants depend on in the estimates, we give a detailed proof of the following lemma.

\begin{lemma}\label{05141249}
Assume that $\beta\in(0,1)$, $\alpha\in(0,2)$, and $p\in (1,\infty)$.
Suppose that Assumption \ref{05031403} holds.
Then, there exist $\varepsilon=\varepsilon(d,p,\alpha,\beta,\delta,K_1,\omega)$, $N=N(d,p,\alpha,\delta,K_1)$, and $\lambda_0=\lambda_0(d,p,\alpha,\beta,\delta,K_1,\omega)$ such that the following holds. If $\lambda\ge \lambda_0$, $u\in \mathcal{H}_p^{1,\alpha}(T)$ satisfies (\ref{05132019}) and for every $t\in(0,T)$, $u(t,\cdot)$ has support in a ball of radius $\varepsilon$, then
  \[
   \|u_t\|_{L_p(T)}+\|\partial^\alpha u\|_{L_p(T)}\le N \|f\|_{L_p(T)},
 \]
\[
     \|u\|_{L_p(T)}\le N (T \wedge \lambda^{-1}) \|f\|_{L_p(T)}.
\]
\end{lemma}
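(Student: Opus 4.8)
The plan is to run a standard frozen-coefficient argument, localized to the small ball of radius $\varepsilon$ that supports $u(t,\cdot)$. Fix $t$ and let $x_0 = x_0(t)$ be the center of that ball. Write the equation \eqref{05132019} as
\[
u_t - \cL^{a(t,x_0,\cdot)} u + \lambda u = f + \left(\cL^a u - \cL^{a(t,x_0,\cdot)} u\right),
\]
so that the left-hand side has $x$-independent (indeed $x$-frozen) coefficients. For such an operator the $L_p$-solvability with the estimates
\[
\|u_t\|_{L_p(T)} + \|\partial^\alpha u\|_{L_p(T)} \le N \|F\|_{L_p(T)}, \qquad \|u\|_{L_p(T)} \le N (T\wedge\lambda^{-1})\|F\|_{L_p(T)}
\]
is already available (the constant-coefficient theory, e.g. \cite{MR2863859}, noting Assumption \ref{05031403}(iii) handles the $\alpha=1$ principal-value cancellation), with $F = f + (\cL^a u - \cL^{a(t,x_0,\cdot)} u)$ and $N = N(d,p,\alpha,\delta,K_1)$. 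So it remains to absorb the commutator term.

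Here is where the key Lemma \ref{05021011} enters. Since $u(t,\cdot)$ is supported in $B_\varepsilon(x_0)$, choose a cutoff $\varphi = \varphi(t,\cdot) \in C_0^\infty(B_{2\varepsilon}(x_0))$ with $\varphi \equiv 1$ on $B_\varepsilon(x_0)$, so that $\varphi u = u$ and hence $\cL^a u - \cL^{a(t,x_0,\cdot)} u = \varphi\,\cL^{b(t,\cdot)} u$ where $b(t,x,y) := a(t,x,y) - a(t,x_0,y)$. By Assumption \ref{05031403}(ii), $b$ satisfies the hypotheses of Lemma \ref{05021011} with the same modulus $\omega$, and moreover $\sup_{z\in B_{2\varepsilon}(x_0),\,y}|b(t,z,y)| \le (2-\alpha)\,\omega(2\varepsilon)$. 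Applying Lemma \ref{05021011} (after translating $x_0$ to the origin), we get for each $t$
\[
\|\varphi\,\cL^{b(t,\cdot)} u(t,\cdot)\|_{L_p}^p \le N\, C(\varepsilon,\varphi,\omega,\omega(2\varepsilon))\, \|\partial^\alpha u(t,\cdot)\|_{L_p}^p,
\]
and, crucially, as $\varepsilon \to 0$ the quantity $C(\varepsilon,\varphi,\omega,\omega(2\varepsilon))$ — which is controlled by $\omega(2\varepsilon)^p + \gamma(\varepsilon)^p$ up to the $\varphi$-dependent factors, all of which can be taken scale-invariant by choosing $\varphi$ a rescaled fixed bump — tends to $0$. (Note $\gamma(\varepsilon)\to 0$ by dominated convergence from \eqref{eq1022_03}.) Integrating in $t$,
\[
\|\cL^a u - \cL^{a(t,x_0,\cdot)} u\|_{L_p(T)} \le \kappa(\varepsilon)\, \|\partial^\alpha u\|_{L_p(T)},
\]
with $\kappa(\varepsilon)\to 0$ as $\varepsilon\to 0$, the rate depending on $\omega$ and $\beta$ only (and $d,p,\alpha$).

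Combining the two displays gives
\[
\|u_t\|_{L_p(T)} + \|\partial^\alpha u\|_{L_p(T)} \le N_0\big(\|f\|_{L_p(T)} + \kappa(\varepsilon)\|\partial^\alpha u\|_{L_p(T)}\big),
\]
with $N_0 = N_0(d,p,\alpha,\delta,K_1)$. Now choose $\varepsilon = \varepsilon(d,p,\alpha,\beta,\delta,K_1,\omega)$ small enough that $N_0\,\kappa(\varepsilon) \le 1/2$; then the $\|\partial^\alpha u\|$ term on the right is absorbed into the left, yielding the first asserted estimate with $N = 2N_0 = N(d,p,\alpha,\delta,K_1)$. For the second estimate, feed the resulting control of $\|\partial^\alpha u\|_{L_p(T)}$ back into the frozen-coefficient bound $\|u\|_{L_p(T)} \le N(T\wedge\lambda^{-1})\|F\|_{L_p(T)}$: since $\|F\|_{L_p(T)} \le \|f\|_{L_p(T)} + \kappa(\varepsilon)\|\partial^\alpha u\|_{L_p(T)} \le N\|f\|_{L_p(T)}$, we obtain $\|u\|_{L_p(T)} \le N(T\wedge\lambda^{-1})\|f\|_{L_p(T)}$. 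The threshold $\lambda_0$ and the admissibility of these estimates for $\lambda \ge \lambda_0$ come directly from the constant-coefficient theory, so $\lambda_0 = \lambda_0(d,p,\alpha,\beta,\delta,K_1,\omega)$ (it depends on $\omega$ only through the choice of $\varepsilon$).

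The main obstacle is the quantitative smallness of the commutator: one must verify that $C(\varepsilon,\varphi,\omega,\omega(2\varepsilon))$ genuinely vanishes as $\varepsilon\to 0$ with a rate that is uniform in the parameters we are tracking, and in particular that the cutoff $\varphi$ can be chosen so that the factors $\|\varphi\|_{L_\infty}$, $\varepsilon\|D\varphi\|_{L_\infty}$ stay bounded under rescaling — this is what makes the absorption possible. A secondary technical point is confirming that the constant-coefficient solvability statement is available in exactly the $\mathcal{H}_p^{1,\alpha}(T)$-setting with the stated dependence of $N$ and $\lambda_0$, including the $\alpha = 1$ case where Assumption \ref{05031403}(iii) is needed; once those are in hand the argument is routine.
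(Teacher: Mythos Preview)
There is a genuine gap at the step where you claim
\[
\cL^a u - \cL^{a(t,x_0,\cdot)} u \;=\; \varphi\,\cL^{b(t,\cdot)} u.
\]
The justification you give is that $\varphi u = u$, but this does \emph{not} imply the displayed equality, because $\cL^{b}$ is a non-local operator: even though $u(t,\cdot)$ is supported in $B_\varepsilon(x_0)$, the function $\cL^{b}u(t,\cdot)$ is in general nonzero outside $B_{2\varepsilon}(x_0)$. Concretely, for $x\notin B_{2\varepsilon}(x_0)$ one has $u(x)=0$ and $\nabla u(x)=0$, so
\[
\cL^{b}u(t,x)=\int_{|y|\ge \varepsilon} u(t,x+y)\,b(t,x,y)\,\frac{dy}{|y|^{d+\alpha}},
\]
which has no reason to vanish. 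Thus $(1-\varphi)\cL^{b}u\not\equiv 0$, and your commutator bound covers only the piece $\varphi\,\cL^{b}u$. The missing tail term is not small in $\varepsilon$; a direct estimate via Minkowski gives only
\[
\|(1-\varphi)\cL^{b}u(t,\cdot)\|_{L_p}\le N\,\varepsilon^{-\alpha}\|u(t,\cdot)\|_{L_p},
\]
which blows up as $\varepsilon\to 0$ and therefore cannot be absorbed into $\|\partial^\alpha u\|_{L_p(T)}$.

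This is exactly where the threshold $\lambda_0$ actually enters, contrary to your remark that $\lambda_0$ ``comes directly from the constant-coefficient theory'' (the frozen-coefficient estimates hold for all $\lambda>0$). In the paper's proof one splits $(\cL-\cL_0)u=\varphi(\cL-\cL_0)u+(1-\varphi)(\cL-\cL_0)u$; the first piece is treated via Lemma~\ref{05021011} and is made small by choosing $\varepsilon$ small (just as you argue), while the second piece contributes a term $N_2\varepsilon^{-\alpha}\|u\|_{L_p(T)}$ on the right-hand side. This lower-order term is then absorbed using the bound $\|u\|_{L_p(T)}\le N(T\wedge\lambda^{-1})\|F\|_{L_p(T)}$ by taking $\lambda\ge \lambda_0 := 4N_1N_2\varepsilon^{-\alpha}$, which accounts for the stated dependence $\lambda_0=\lambda_0(d,p,\alpha,\beta,\delta,K_1,\omega)$. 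Once you insert this extra split and absorption step, the rest of your outline goes through and matches the paper's argument.
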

\begin{proof}
Without loss of generality,
we assume that for all $t\in (0,T)$, $u(t,\cdot)\in C_0^\infty(B_\varepsilon)$.
Let $\varphi\in C_0^\infty (B_{4\varepsilon})$, $0\le \varphi\le 1$, $\varphi(x)=1$ if $|x|\le 2\varepsilon$, with
$$
\|D\varphi\|_{L_\infty}\le \frac{N}{\varepsilon}.
$$
Set
\[
\cL_0u(t,x)=\int_{\bR^d}\nabla_y^\alpha u(t,x) a(t,0,y)\frac{dy}{|y|^{d+\alpha}}.
\]
Note that for any $\lambda >0$,
by Proposition 1 in \cite{MR3145767},
 there is a constant $N_1=N_1(d,p,\alpha,\delta,K_1)$ such that
\[
\|u\|_{L_p(T)}
\le
N_1 (T\wedge \lambda^{-1})\|u_t-\cL_0u+\lambda u\|_{L_p(T)}
\]
and
\[
\|u_t\|_{L_p(T)}+\|\partial^\alpha u \|_{L_p(T)}
\le
N_1\|u_t-\cL_0u+\lambda u\|_{L_p(T)}.
\]
Then by the triangle inequality, we have
\begin{align*}
\|u\|_{L_p(T)}
&\le
N_1 (T\wedge \lambda^{-1})\left(\|f\|_{L_p(T)}+\|\varphi(\cL-\cL_0)u\|_{L_p(T)}\right.\\
&\quad \left.+\|(\cL u-\cL_0u)(1-\varphi)\|_{L_p(T)}
\right)
\end{align*}
and
\begin{align*}
&\|u_t\|_{L_p(T)}+\|\partial^\alpha u \|_{L_p(T)}
\\
&\le
N_1\left(\|f\|_{L_p(T)}+\|\varphi(\cL-\cL_0)u\|_{L_p(T)}
+\|(\cL u-\cL_0u)(1-\varphi)\|_{L_p(T)}
\right).
\end{align*}
Observe that for any $t\in (0,T]$,
\[
\sup_{z\in B_{8\varepsilon},y\in \bR^d}|a(t,z,y)-a(t,0,y)|^p\le \omega(8\varepsilon)^p.
\]
Due to Lemma \ref{05021011} with $a\to a(t,x,y)-a(t,0,y)$ and $\varepsilon \to 4\varepsilon$, we see that
\[
\|\varphi(\cL-\cL_0)u\|_{L_p(T)}\le N_0 C(\varepsilon)\|\partial^\alpha u\|_{L_p(T)},
\]
where
\[
C(\varepsilon)=\omega(8\varepsilon)+\gamma(4\varepsilon)\varepsilon^{\beta},
\]
which goes to zero as $\varepsilon\to 0$.
Since $u$ has support in the ball of radius $\varepsilon$ centered at $0$,  using the Minkowski inequality, we see that for each $t\in(0,T)$,
\begin{align*}
&\|(\cL u-\cL_0u)(1-\varphi)\|_{L_p}\\
&=\left(\int_{B_{2\varepsilon}^c} \left \lvert
\int_{|y|>\varepsilon} \nabla^\alpha_y u(t,x)(a(t,x,y)-a(t,0,y))\frac{dy}{|y|^{d+\alpha}}\right \rvert^p |1-\varphi(x)|^p \,dx\right)^{1/p}\\
&=\left(\int_{B_{2\varepsilon}^c} \left \lvert
\int_{|y|>\varepsilon} u(t,x+y)(a(t,x,y)-a(t,0,y))\frac{dy}{|y|^{d+\alpha}}\right \rvert^p  \,dx\right)^{1/p}\\
&\le
N\int_{|y|>\varepsilon}\|u(t,\cdot+y)\|_{L_p}\frac{dy}{|y|^{d+\alpha}}
\le N_2\varepsilon^{-\alpha}\|u(t,\cdot)\|_{L_p}.
\end{align*}
For a fixed $\varepsilon>0$ satisfying $N_1N_0C(\varepsilon)\le 1/2$,
we have that
\[
\|u_t\|_{L_p(T)}+\|\partial^\alpha u\|_{L_p(T)}\le 2 N_1 \|f\|_{L_p(T)}+2N_1N_2\varepsilon^{-\alpha}\|u\|_{L_p(T)}
\]
and
\[
\|u\|_{L_p(T)}\le 2N_1 (T\wedge \lambda^{-1}) \|f\|_{L_p(T)}+2N_1N_2 (T\wedge \lambda^{-1}) \varepsilon^{-\alpha}\|u\|_{L_p(T)}.
\]
Taking $\lambda_0=4N_1N_2\varepsilon^{-\alpha}$, we obtain the desired estimates.
\end{proof}

\subsection*{Proof of Theorem \ref{05140812}}
Here, we  use a version of the partition of unity argument in the proof of Theorem 1.6.4 in \cite{MR2435520}.

We may assume that $u\in\mathcal{S}$ as $\mathcal{S}$ is dense in $H_{p,w}^\alpha$.
Let
\[
\cL_1^a u(x)= \int_{|y|\le 1} (u(x+y)-u(x)-\chi_\alpha(y)(\nabla u(x),y))a(x,y)\frac{dy}{|y|^{d+\alpha}}.
\]
According to Lemmas \ref{05142036}, \ref{lem10092043}, and \ref{05191046}, we obtain
\[
 \|(\cL^a-\cL_1^a)u\|_{L_{p,w}}\le N\|u\|_{H_{p,w}^\alpha}.
\]
Therefore, it is sufficient to show that
\[
\|\cL_1^a u\|_{L_{p,w}} \le N \|u\|_{H_{p,w}^\alpha}.
\]
Let $\varphi\in C_0^\infty(B_1)$, $0\le\varphi\le1$, and $\varphi(x)=1$ if $|x|\le 1/2$.
Then for any $x\in \bR^d$,
\[
\int_{\bR^d}\left| \cL_1^a u(x) \right|^p w(x) \,dx
=N\int_{\bR^d}\int_{\bR^d}\left|\varphi(x-z)\cL_1^au(x)\right|^p w(x) \,dz\,dx.
\]
Observe that
\begin{align*}
&\varphi(x-z)\cL_1^a u(x)=\cL_1^a\big(u(\cdot)\varphi(\cdot-z)\big)(x)-u(x)\cL_1^a\big(\varphi(\cdot-z)\big)(x)\\
&\quad -\int_{|y|\le 1} \big(u(x+y)-u(x)\big)\big(\varphi(x+y-z)-\varphi(x-z)\big) a(x,y)\frac{dy}{|y|^{d+\alpha}}\\
&=:I_1+I_2+I_3.
\end{align*}
We first estimate $I_2$.  Using the Minkowski inequality in $z$ and Lemma \ref{05151257} to $\varphi$ without weights, we see that
\[
\int\int I_2^p w(x) \,dz \,dx\le N \int |u(x)|^p \|\varphi\|_{H_p^2}^p w(x) \,dx \le N \|u\|_{L_{p,w}}^p.
\]
For $I_3$, using Lemma \ref{05140940}  and the fact that $|a|\le (2-\alpha)K_1$, we have
\[
\int\int I_3^p w(x) \,dz\,dx\le N\|u\|_{H_{p,w}^\alpha}.
\]
To estimate $I_1$, we observe that
\[
\cL_1^a\big(u(\cdot)\varphi\left(\cdot-z\right)\big)(x)=\varphi\left(\frac{x-z}{4}\right)\cL_1^a\big(u(\cdot)\varphi(\cdot-z)\big)(x).
\]
Thus by  Lemma \ref{05021011}, we see that
\begin{align*}
&\int_{\bR^d}\int_{\bR^d}\left|\cL_1^a
\big(u(\cdot)\varphi(\cdot-z)\big)(x)\right|^p w(x) \,dx\,dz\\
&\le N \int_{\bR^d} \int_{\bR^d}\left|\partial^\alpha \big(u(\cdot)\varphi(\cdot-z)\big)(x)\right|^p w(x) \,dx \,dz\\
&\le N \int_{\bR^d} \int_{\bR^d}\left|\partial^\alpha u(x) \varphi(x-z)\right|^p w(x) \,dx \,dz\\
&\quad + N \int_{\bR^d} \int_{\bR^d}\left|\partial^\alpha\big(\varphi(\cdot-z)\big)(x)u(x)\right|^p w(x) \,dx \,dz\\
&\quad+ N
\int_{\bR^d}\int_{\bR^d}\left(\int_{\bR^d}|u(x+y)-u(x)||\varphi(x+y-z)-\varphi(x-z)| \frac{dy}{|y|^{d+\alpha}}\right) w(x) \,dz\,dx\\
&=:J_1+J_2+J_3.
\end{align*}
By the same calculations as in the estimates of $I_2$ and $I_3$,  we derive that
\[
J_2+J_3\le N \|u\|_{H_{p,w}^\alpha}^p.
\]
Note that $J_1=N\|\partial^\alpha u\|_{L_{p,w}}$.
Consequently, by Lemma \ref{lem10092043} we reach the result.\qed

\subsection*{Proof of Theorem \ref{05141210}}
Using the partition of unity argument and standard freezing coefficient technique with Lemma \ref{05141249}, we have
  (\ref{05132016}) and (\ref{05132020}).
  The uniqueness comes from the estimates.
  Finally, using Theorem \ref{05140812} and the  method of continuity, we reach the existence of solutions.\qed

\subsection*{Proof of Theorem \ref{thm05241}}

 Temporarily, we assume that $\alpha>1$.
 Fix $\lambda\ge \lambda_0$, where $\lambda_0$ is the constant in the statement of Theorem \ref{05141210}.
 Let $\tau\in(0,T).$
 Due to Theorem \ref{05141210}, we have
\begin{align*}
& \|u\|_{\cH_p^{1,\alpha}(\tau)}
\le N\|u_t-\cL^au+\lambda u\|_{L_p(\tau)}\\
& \le N\|u_t-\cL^au+b^iD_i u+cu\|_{L_p(\tau)}
 + N\|u\|_{L_p(\tau)}+N\|Du\|_{L_p(\tau)},
\end{align*}
where the constant $N$ is independent of $\tau$.
  By an interpolation inequality with respect to the spatial variables,
 \[
 \|Du\|_{L_p(\tau)}\le \varepsilon\|\partial^\alpha u\|_{L_p(\tau)}+N(\varepsilon)\|u\|_{L_p(\tau)}.
  \]
 By taking sufficiently small $\varepsilon$, we see that
\begin{align*}
 \|u\|_{\cH_p^{1,\alpha}(\tau)}
  &\le N\|u_t-\cL^au+b^iD_i u+cu\|_{L_p(\tau)}
  + N\|u\|_{L_p(\tau)}\\
&  =N\|f\|_{L_p(\tau)}
  + N\|u\|_{L_p(\tau)}.
\end{align*}
  Observe that
\begin{align*}
&  \|u(t,\cdot)\|_{L_p(\bR^d)}^p
  \le \left\|\int_0^t |u_t|(s,\cdot)\,ds \right\|_{L_p(\bR^d)}^p\\
&   \le t^{p-1}\left(\int_0^t \|u_t(s,\cdot)\|_{L_p(\bR^d)}^p \,ds\right)
  =t^{p-1}\|u_t\|_{L_p(t)}^p
\end{align*}
for any $t\in (0,T)$.
Hence, by taking integration over $(0,\tau)$, we have \[
  \|u\|_{L_p(\tau)}^p\le
  \left(\int_0^\tau t^{p-1}\|u_t\|_{L_p(t)}^p \,dt \right)
  \le \left(\int_0^\tau t^{p-1}\|u\|_{\cH_p^\alpha(t)}^p \,dt \right).
  \]
  Thus,
  \[
  \|u\|_{\cH_p^{1,\alpha}(\tau)}^p
  \le
  N\|f\|_{L_p(\tau)}^p
  + N\left(\int_0^\tau t^{p-1}\|u\|_{\cH_p^\alpha(t)}^p \,dt \right).
  \]
 Set
 \[
 \cA(s)=\|u\|_{\cH_p^{1,\alpha}(s)}^p,
 \quad
 \cB(s)=N\|f\|_{L_p(s)}^p.
 \quad
 \cC(s)= Ns^{p-1}
 \]
 for $s\in (0,T)$.
 Then, we see that
 \[
 \cA(\tau)\le \cB(\tau)+\int_0^\tau \cC(t) \cA(t)\,dt
 \]
 for any $\tau\in(0,T).$
 By Gr\"onwall's inequality,
 \[
 \cA(T)\le \cB(T) e^{\int_0^T \cC(t)\,dt}.
 \]
 This proves the theorem.\qed

\bibliographystyle{plain}


 \def\cprime{$'$}


\end{document}